\newtheorem{theorem}{Theorem}[section]
\newtheorem{lemma}[theorem]{Lemma}
\newtheorem{proposition}[theorem]{Proposition}
\theoremstyle{definition}
\newtheorem{remark}[theorem]{Remark}
\newtheorem{assumption}[theorem]{Assumption}
\newcommand{\IR}{\mathbb{R}}
\newcommand{\IC}{\mathbb{C}}
\newcommand{\IN}{\mathbb{N}}
\newcommand{\IE}{\mathbb{E}}
\newcommand{\IP}{\mathbb{P}}
\newcommand{\cX}{\mathcal{X}}
\newcommand{\cY}{\mathcal{Y}}
\newcommand{\cE}{\mathcal{E}}
\newcommand{\cP}{\mathcal{P}}
\newcommand{\cR}{\mathcal{R}}
\newcommand{\cA}{\mathcal{A}}
\newcommand{\cB}{\mathcal{B}}
\newcommand{\cD}{\mathcal{D}}
\renewcommand{\L}{\mathrm{L}}
\newcommand{\C}{\mathrm{C}}
\newcommand{\B}{\mathrm{B}}
\renewcommand{\H}{\mathrm{H}}
\newcommand{\W}{\mathrm{W}}
\newcommand{\e}{\mathrm{e}}
\renewcommand{\d}{\mathrm{d}}
\newcommand{\eps}{\varepsilon}
\newcommand{\divergence}{\operatorname{div}}
\DeclareMathOperator{\supp}{supp}
\DeclareMathOperator{\Id}{Id}
\DeclareMathOperator{\dom}{\mathcal{D}}
\numberwithin{equation}{section} 
\title[The Keller-Segel-Navier-Stokes system in bounded Lipschitz domains]{Strong solutions to the Keller-Segel-Navier-Stokes system in bounded Lipschitz domains}
\author{Matthias Hieber}
\address{TU Darmstadt, Angewandte Analysis, Schlossgartenstraße 764289 Darmstadt, Germany}
\email{hieber@mathematik.tu-darmstadt.de}
\author{Hideo Kozono}
\address{Waseda University, Tokyo, Japan}
\email{kozono@waseda.jp}
\author{Sylvie Monniaux}
\address{Aix Marseille Univ, CNRS, I2M, Marseille, France}
\email{sylvie.monniaux@univ-amu.fr}
\author{Patrick Tolksdorf}
\address{KIT, Fakultät für Mathematik, 76128 Karlsruhe, Germany}
\email{patrick.tolksdorf@kit.edu}
\keywords{Keller-Segel-Navier-Stokes system, Lipschitz domains, local and global well-posedness, stability, chemotaxis-consumption model}
\subjclass[2010]{35Q30,35Q92,92C17}
\thanks{Matthias Hieber gratefully acknowledges the support by  the Deutsche  Forschungsgemeinschaft (DFG) through the Research Unit FOR 5528. The research of Hideo Kozono was partially supported by the JSPS Grant-in-Aid for Scientific Research (A)-21H04433.}
\begin{document}
\begin{abstract}
Consider the coupled Keller-Segel-Navier-Stokes or the chemotaxis-consumption-Navier-Stokes 
system in bounded Lipschitz domains for general coupling terms which, e.g., include  
buoyancy forces. It is shown that these systems admit  local strong as well as global 
strong solutions for small data in the setting of critical Besov spaces. Moreover, non-trivial 
equilibria are shown to be exponentially stable. For smoother data, these solutions are shown 
to be globally bounded and to preserve positivity properties.     
The approach presented is based on optimal $\L^q$-regularity properties of the 
Neumann Laplacian and the Stokes operator in bounded Lipschitz domains. 
\end{abstract}
\maketitle

\section{Introduction}

\noindent The theory of reaction diffusion equations as well as the fundamental  equations of fluid dynamics 
have a truly long tradition and a beautiful  history~\cite{KNRY:23}. 
In this paper we are concerned with the coupling of the mechanisms of 
reaction-diffusion equations of chemotaxis type with incompressible visous fluids. 
Our emphasize lies on the situation, where the underyling domains have nonsmooth boundaries, 
more precisely, on domains with Lipschitz boundaries. 

We investigate the  coupled Keller-Segel-Navier-Stokes models  as well as the 
chemotaxis-con\-sumption-Navier-Stokes model in Lipschitz domains.  
There, the chemotaxis  process is dissolved in an incompressible  fluid and the diffusion process 
interacts with the fluid. Problems of this type arise for example  in the investigation of 
the evolution of bacteria, when  besides their chemotactically biased movement towards 
their nutrient, a buoyancy driven effect on the fluid motion is observed, see~\cite{Goldstein}. 
Such chemotaxis and chemotaxis-consumption models coupled to an ambient flow have 
been studied, e.g., in~\cite{PK:92,Goldstein,KNRY:23}.

First analytic results in this direction go back to M.\@ Winkler~\cite{Winkler2016}, 
who proved the existence of global weak solutions to the coupled system. While finite-time 
blowup solutions to the classical Keller-Segel system have been studied in detail by many authors,  
Li and Zhou~\cite{LZ:24} constructed very recently a smooth finite-time blowup solution to the coupled
Keller-Segel-Navier-Stokes system on $\IR^3$. Furthermore, Gong and He~\cite{GH:21} proved 
the global existence of solutions to the two-dimensional coupled parabolic elliptic 
Patlak-Keller-Navier-Stokes system with friction force provided the total mass satisfies 
$M < 8 \pi$.   
For surveys of results concerning the Keller-Segel system and the chemotaxis-consumption model 
itself,  we refer to the articles  of Lankeit and Winkler~\cite{LW:20, LW:23}.  

As written above, our emphasis lies on the investigation of the coupled Keller-Segel-Navier-Stokes 
system in Lipschitz domains. The analysis of elliptic operators and of the Stokes operator 
acting in Lipschitz domains has seen many beautiful  developments in the last years; we refer 
here, e.g., to~\cite{Fabes_Mendez_Mitrea_1998, Jerison_Kenig_1989, Shen2012,
Kunstmann_Weis,Tolksdorf,Gabel_Tolksdorf,Tolksdorf_convex,Brown_Shen_1995,Geng_Shen}. 
In this paper we elaborate these developments further to investigate coupled 
Keller-Segel-Navier-Stokes models as well as the chemotaxis-consumption-Navier-Stokes model 
in Lipschitz domains.

For results concerning the Keller-Segel system itself in Lipschitz domains we refer to the work 
of Horstmann, Meinlschmidt and Rehberg~\cite{HMR:18}. They  proved 
that this system, regarded in Lipschitz domains, possesses a unique local strong solution. A corresponding result in bounded convex domains is due to Hieber, Kress and Stinner~\cite{HKS:21}.
For the special situation of circular sectors in $\IR^2$ of angles $\theta \in (0 , \pi / 2]$, Fuest and Lankeit~\cite{FL:23} recently 
showed that the critical mass for the blow-up dichotomy for the full parabolic Keller-Segel system is given by $4 \theta$ and hence depends directly on the Lipschitz geometry. This has to be seen in contrast to the fact that in arbitrary bounded and smooth subsets of $\IR^2$ global classical solutions always exist for masses less than $4 \pi$, see~\cite{Nagai_Senba_Yoshida}. 

This shows that the Lipschitz geometry indeed plays a special role in the study of blow-up phenomena and provides a further motivation to study these coupled systems in general Lipschitz domains.

We are now describing our system in detail. Let $\Omega \subset \IR^n$, where $n=2$ or $n=3$, be 
a bounded Lipschitz domain  and let  $0 < T \leq \infty$.  
The dynamics of the population density $u : (0 , T) \times \Omega \to \IR$, the concentration 
of the chemical attractant $v : (0 , T) \times \Omega \to \IR$, the fluid 
velocity $w : (0 , T) \times \Omega \to \IR^n$ and the pressure 
$\pi : (0 , T) \times \Omega \to \IR$ are described by the following 
coupled systems of partial differential equations
\begin{align}\tag{KSNS} \label{Eq: KSNS}
 \left\{ \begin{aligned}
  \partial_t u - \Delta u + w \cdot \nabla u &= - \divergence(u \nabla v) &&
  \text{in } (0 , T) \times \Omega, \\
  \partial_t v - \Delta v + v + w \cdot \nabla v &= u && \text{in } 
  (0 , T) \times \Omega, \\
  \partial_t w - \Delta w + (w \cdot \nabla) w + \nabla \pi &= u f && 
  \text{in } (0 , T) \times \Omega, \\
  \divergence(w) &= 0 && \text{in } (0 , T) \times \Omega.
 \end{aligned} \right.
\end{align}
The function $f : \Omega \to \IR^n$ is supposed to be initially known. Of particular interest 
for $n=3$ is the function $f(x)=e_3$, where $e_3= -(0,0,1)^{\top}$. The system is then the 
Keller-Segel-Navier-Stokes system with buoyancy and describes the cell's reaction force to the 
buoyancy exerted by the fluid. 
The blowup scenario for the associated Patlak-Segel-Navier-Stokes system is very interesting. 
Indeed, a smooth finite-time blowup solution for the Patlak-Segel-Navier-Stokes system in three 
dimensions with buoyancy was constructed by Li and Zhou~\cite{LZ:24}. This means that the 
Navier-Stokes equations cannot prevent the coupled system from finite-time blowup. 
On the other hand, Hu, Kiselev and Yao~\cite{HKY:23} found a mechanism of suppression of 
blowup by buoyancy and proved global existence for smooth data for a Keller-Segel system coupled 
with a fluid flow adhering to Darcy's law for porous media via buoyancy forces. 

Note that our approach allows to consider arbritray coupling functions $f \in \L^n(\Omega;\IR^n)$. 
It remains an open problem to decide whether the above blowup and suppression from blowup 
results remain valid for the Keller-Segel-Navier-Stokes and chemotaxis-consumption-Navier-Stokes models
under consideration. 

This system \eqref{Eq: KSNS} is complemented by boundary conditions of the form
\begin{align*} \tag{BC}
 \left\{ \begin{aligned}
  \partial_{\nu} u = \partial_{\nu} v &= 0 && \text{on } 
  (0 , T) \times \partial \Omega, \\
  w &= 0 && \text{on } (0 , T) \times \partial \Omega \\ 
  \end{aligned}\right.
\end{align*}
and initial data 
\begin{equation} \tag{IC}
  u(0) = u_0, \quad  
  v(0) = v_0,  \quad 
  w(0) = w_0  \quad \text{in } \Omega.
\end{equation}
Neglecting the lower-order term $v$ in the second equation 
of~\eqref{Eq: KSNS}, the system has the following scaling invariance:
\begin{align*}
 u_{\lambda} (t , x) &:= \lambda^2 u(\lambda^2 t , \lambda x), 
 \quad v_{\lambda} (t , x):= v(\lambda^2 t , \lambda x), \\
 w_{\lambda} (t , x) &:= \lambda w (\lambda^2 t , \lambda x), 
 \quad \pi_{\lambda} (t , x) := \lambda^2 \pi (\lambda^2 t , \lambda x),
\end{align*}
whenever $f$ is replaced by $f_\lambda (x) := \lambda f (\lambda x)$. 

The aim of this paper is twofold: first we  develop general methods for equations of  
Keller-Segel-Navier-Stokes type in nonsmooth domains in order to establish strong 
well-posedness results for  scaling invariant  critical spaces and secondly we 
study stability of equilibria.   More precisely, we prove local strong well-posedness, 
Theorem~\ref{thm:localex}, as well as global strong well-posedness of 
system~\eqref{Eq: KSNS} for small data, along with the  stability of non-trivial 
equilibria in a critical functional setting, Theorem~\ref{Thm: Existence/Stability}. 
In particular, our framework allows to solve the system for initial data in 
critical Besov spaces. For the case of the uncoupled Navier-Stokes equations on 
$\IR^n$, solutions in critical Besov spaces $\B^{n/q-1}_{q,p}(\IR^n)$ were obtained first by  
Cannone~\cite{Cannone}. In our two well-posedness results we extend his result to the coupled 
Keller-Segel-Navier-Stokes  system in arbitrary bounded Lipschitz domains.    
Moreover, for smooth enough initial data we show that the solutions are 
bounded globally in space and time and that the population density remains 
positive for positive initial data, see Theorem~\ref{Thm: Boundedness and positivity}. 

Both well-posedness results presented in Section 2 are based on the maximal regularity 
approach, see, e.g.,~\cite{CL-93, Ama93, PS2016}. In order to treat the case of critical spaces 
we use the technique of time-weighted maximal regularity developed by Pr\"uss, Simonett 
and Wilke in~\cite{PSW18}. An essential ingredient of our approach is the knowledge of 
optimal regularity properties of the linearization of~\eqref{Eq: KSNS}. 

This  is indeed a very delicate matter, since the underlying domains are only Lipschitz.    
In Lipschitz domains, characterizations of domains of the Neumann Laplacian 
and the Stokes operator pose major problems. Note that, in general, it is not true 
that, even in $\L^2 (\Omega)$, the domain of the Neumann Laplacian is $\H^2 (\Omega)$, 
see, e.g., Jerison and Kenig~\cite[Prop.~1.4]{Jerison_Kenig_1995}. 
Instead, one may establish only embeddings into fractional Sobolev spaces, 
see Fabes, Mendez and Mitrea~\cite[Thm.~9.2]{Fabes_Mendez_Mitrea_1998} for the 
Neumann Laplacian as well as the results due to Brown and Shen~\cite[Thm.~2.12]{Brown_Shen_1995} 
and Mitrea and Wright~\cite[Thm.~10.6.2]{Mitrea_Wright} for the Stokes operator. 

However, optimal descriptions of the domains of the corresponding  square roots in 
terms of first order Sobolev spaces were proved by Jerison and 
Kenig~\cite[Thm.~1]{Jerison_Kenig_1989}, as well as by  
Gabel and Tolksdorf~\cite[Thm.~1.3]{Gabel_Tolksdorf} and~\cite[Thm.~1.1]{Tolksdorf}. 
We will take advantage of these optimal characterizations to achieve results within  
critical scaling invariant spaces.

Our method applies also to the chemotaxis-consumption-Navier-Stokes model, a modified 
version of~\eqref{Eq: KSNS} which was shown to be  relevant to 
describe chemotactic processes in a fluid~\cite{Goldstein}. For analytical results, 
see \cite{Winkler2016,LW:23}.  
This system is given by
\begin{align*} 
\tag{CCNS} \label{Eq: KSNS-variant}
 \left\{ \begin{aligned}
  \partial_t u - \Delta u + w \cdot \nabla u &= - \divergence(u \nabla v) 
  && \text{in } (0 , T) \times \Omega \\
  \partial_t v - \Delta v + w \cdot \nabla v &= - u v && \text{in } (0 , T) \times \Omega \\
  \partial_t w - \Delta w + (w \cdot \nabla) w + \nabla \pi &= u f && \text{in } (0 , T) \times \Omega \\
  \divergence(w) &= 0 && \text{in } (0 , T) \times \Omega
 \end{aligned} \right.
\end{align*}
complemented with the same initial and boundary conditions as above. 

Let us note that all results stated for the system~\eqref{Eq: KSNS} are valid also 
for~\eqref{Eq: KSNS-variant} as well, see Theorem~\ref{Thm: Variant}. The approach presented  
is hence rather robust and can be adjusted to further variants without difficulties.  

This paper is organized as follows. In Section~\ref{Sec: Preliminaries and Main Results} 
we collect well-known results for operators in Lipschitz domains, introduce  the functional 
setting and state our main results, which proofs can be found in 
Section~\ref{Sec: Proof of Local solvability} (local well-posedness), 
Section~\ref{Sec: Global existence for small initial data} 
(global well-posedness and stability) and Sections~\ref{Sec: Positivity} 
and~\ref{Sec: Regularity and boundedness} for positivity and boundedness. 
In the final Section~\ref{Sec: A variant of (KSNS)} we present the treatment 
of~\eqref{Eq: KSNS-variant}.

\section{Preliminaries and Main Results}
\label{Sec: Preliminaries and Main Results}
\noindent Let $\Omega \subset \IR^n$ be a bounded Lipschitz domain and $n = 2$ or $n = 3$. 
We start by rewriting the Keller-Segel-Navier-Stokes system~\eqref{Eq: KSNS} 
as a semilinear evolution equation of the form
\begin{equation}
\label{Eq: abstract system}
 \left\{ \begin{aligned}
 \partial_t U + \cA U &= \Phi(U , U) && \text{in } (0 , T), \\
 U(0) &= U_0
 \end{aligned} \right.
\end{equation}
where $U$ is defined as the triplet $U = (u , v , w)$ and $U_0 = (u_0 , v_0 , w_0)$. 
We study this system on the space 
\begin{align*}
 X_0 := \W^{- 1 , q}_0 (\Omega) \times \L^{q^*} (\Omega) \times \L^q_{\sigma} (\Omega) \quad \text{or on} \quad \cX_0 := \W^{-1 , q}_{\mathrm{av}} (\Omega) \times \L^{q^{*}} (\Omega) \times \L^q_{\sigma} (\Omega),
\end{align*}
where $q^*$ denotes the Sobolev exponent of $\W^{1 , q} (\Omega)$, i.e.,
\[
 \frac{1}{q} - \frac{1}{n} = \frac{1}{q^*},
\]
and $\W^{-1 , q}_0 (\Omega)$  and $\W^{-1 , q}_{\mathrm{av}} (\Omega)$ denote the dual space of $\W^{1 , q^{\prime}} (\Omega)$ and $\W^{1 , q^{\prime}} (\Omega) \cap \L^{q^{\prime}}_{\mathrm{av}} (\Omega)$, respectively, with $1/q + 1/q^{\prime} = 1$. Moreover, $\L^q_{\sigma} (\Omega)$ denotes, as usual, 
the space of solenoidal $\L^q$-vector fields with vanishing normal component and $\L^r_{\mathrm{av}} (\Omega)$ 
the mean value free subspace of $\L^r (\Omega)$ for $1 < r < \infty$. The
operator $\cA$ is given by
\begin{align}
\label{Eq: Operator matrix}
 \cA :=
    \begin{pmatrix}
     - \Delta & 0 & 0 \\ 
     - \Id & - \Delta_{q^*} + \Id & 0 \\
     - \IP [f \boldsymbol{\cdot}] & 0 & A
    \end{pmatrix}
\end{align}
and equipped with the domain
\[
 X_1 := \W^{1 , q} (\Omega) \times \dom(\Delta_{q^*}) \times \dom(A) 
 \quad \text{or} \quad 
 \cX_1 := [\W^{1 , q} (\Omega) \cap \L^q_{\mathrm{av}} (\Omega)] \times 
 \dom( \Delta_{q^{*}}) \times \dom(A).
\]
Here, $\Delta$ denotes the Neumann-Laplacian defined on $\W^{-1 , q}_0 (\Omega)$ 
or on $\W^{- 1 , q}_{\mathrm{av}} (\Omega)$, 
respectively, and $\Delta_r$ the Neumann-Laplacian on $\L^r (\Omega)$ for $1 < r < \infty$.
Furthermore, $A$ denotes the Stokes operator on $\L^q_{\sigma} (\Omega)$ and 
$\IP$ the Helmholtz projection. For $q$ satisfying
\begin{align}
\label{Eq: Lipschitz condition on q}
    \Big\lvert \frac{1}{q} - \frac{1}{2} \Big\rvert < \frac{1}{2 n} + \eps,
\end{align}
where $\eps > 0$ is a number depending on the Lipschitz geometry, it is 
known, see~\cite[Cor.~1.2]{Shen2012} and~\cite[Cor.~1.2]{Gabel_Tolksdorf}, that the Stokes operator generates a bounded analytic semigroup 
on $\L^q_{\sigma} (\Omega)$ and the Helmholtz projection is a bounded operator 
on $\L^q (\Omega ; \IC^n)$, see~\cite[Thm.~11.1]{Fabes_Mendez_Mitrea_1998} and~\cite[Thm.~1.2]{DMitrea}. Observe, that the condition $f \in \L^n (\Omega ; \IR^n)$ implies that $u f \in \L^q (\Omega ; \IR^n)$ so that $\IP [f u]$ is a well-defined element in $\L^q_{\sigma} (\Omega)$. 

We continue by introducing time weighted Banach space valued function spaces. 
Let $1 < p < \infty$, $0 < T \leq \infty$ and $\mu \in (1 / p , 1]$ and let  
$\L^p_{\mu} (0 , T ; X)$ denote the space defined by
\[
  \L^p_{\mu} (0 , T ; X) := \{ u : (0 , T) \to X \text{ measurable} : 
  t \mapsto t^{1 - \mu} u (t) \in \L^p (0 , T ; X) \}
\]
endowed with the norm
\[
 \| u \|_{\L^p_{\mu} (0 , T ; X)} 
 := \bigg( \int_0^T \| t^{1 - \mu} u(t) \|_X^p \, \d t \bigg)^{\frac{1}{p}}.
\] 
The associated weighted Sobolev spaces are defined by
\[
 \H^{1 , p}_{\mu} (0 , T ; X) 
 := \{ u \in \L^p_{\mu} (0 , T ; X) : u^{\prime} \in \L^p_{\mu} (0 , T ; X) \}
\]
equipped with the canonical norm. 

Given a generator $L$ of an analytic semigroup on $X$, we say that $L$ admits maximal 
$\L^p_{\mu}$-regularity on $(0 , T)$, $0 < T \leq \infty$ if for any 
$g \in \L^p_{\mu} (0 , T ; X)$ and $u_0 \in (X , \dom(L))_{\mu - 1 / p , p}$ 
there exists a unique 
\[
 u \in \H^{1 , p}_{\mu} (0 , T ; X) \cap \L^p_{\mu} (0 , T ; \dom(L)) =: 
 \IE_{p , \mu}^T (X , \dom(L))
\]
satisfying 
\[
u^{\prime}+Lu=g, \ u(0)=u_0.
\]
The closed graph theorem implies the existence of some constant $C > 0$ such that
\begin{align}
\label{Eq: Maximal regularity estimate}
 \| u \|_{\IE_{p , \mu}^T (X , \dom(L))} 
 \leq C \big( \|g\|_{\L^p_{\mu}(0,T;X)} + \| u_0 \|_{(X , \dom(L))_{\mu - 1 / p , p}}).
\end{align}
If $L$ has maximal $\L^p_{\mu}$-regularity on $(0 , T_0)$, $0 < T_0 \leq \infty$, 
then $L$ has maximal $\L^p_{\mu}$-regularity on $(0 , T)$ for every $0 < T < T_0$ 
and the constant $C$ can be chosen independently of $T$. Recall that $L$ has maximal 
$\L^p_{\mu}$-regularity if and only if it has maximal 
$\L^p$-regularity~\cite[Thm.~3.5.4]{PS2016} (i.e., $\mu = 1$). \par
Observe that the outcome of the real interpolation space in~\eqref{Eq: Maximal regularity estimate} is often a Besov space. For a brief introduction of all necessary Bessel potential and Besov spaces we refer the reader to the appendix. 
We are now in the position to formulate our first main result on local existence of a unique, strong solution to \eqref{Eq: KSNS}.   

\begin{theorem}[Local existence]\label{thm:localex}
Let $\Omega \subset \IR^n$ be a bounded  Lipschitz domain and $n=2$ or $n=3$. 
Then there exists $\varepsilon= \varepsilon(\Omega) >0$, depending on the 
Lipschitz geometry of $\Omega$, such that for all $p,q \in (1,\infty)$ satisfying
\[
q<n \quad \mbox{and} \quad \frac{n}{q} + \frac{2}{p} \leq 3 \quad \mbox{and} \quad 
\frac{n+1}{2n} - \varepsilon < \frac{1}{q} < 
\frac{n+1}{2n} + \varepsilon
\]
and all 
\[
(u_0,v_0,w_0) \in \B^{n/q-2}_{q , p , 0}(\Omega) \times \B^{n/q^*}_{q^* , p}(\Omega)
 \times \B^{n/q-1}_{q ,p, 0,\sigma}(\Omega) 
\]
and all $f \in \L^n (\Omega;\IR^n)$ there exists $T>0$ such that the Keller-Segel-Navier-Stokes system~\eqref{Eq: KSNS} 
admits a unique strong solution $(u , v , w)$ satisfying 
\begin{align*}
(u,v,w) \in \IE_{p,\mu}^T(X_0 , X_1),
\quad \mbox{where} \quad \mu = \frac{n}{2q} + \frac{1}{p} - \frac{1}{2}.
\end{align*}
\end{theorem}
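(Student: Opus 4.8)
The plan is to set this up as a fixed-point problem for the abstract semilinear system~\eqref{Eq: abstract system} in the time-weighted maximal regularity space $\IE_{p,\mu}^T(X_0,X_1)$, exploiting that the operator $\cA$ in~\eqref{Eq: Operator matrix} has maximal $\L^p_\mu$-regularity on $X_0$. First I would verify that $\cA$ indeed generates an analytic semigroup with maximal regularity: the diagonal blocks are the Neumann Laplacian on $\W^{-1,q}_0(\Omega)$, the shifted Neumann Laplacian $-\Delta_{q^*}+\Id$ on $\L^{q^*}(\Omega)$, and the Stokes operator $A$ on $\L^q_\sigma(\Omega)$, each of which has maximal $\L^p$-regularity under the Lipschitz condition~\eqref{Eq: Lipschitz condition on q} by the cited results of Shen, Gabel--Tolksdorf, and Kunstmann--Weis; since $\cA$ is lower-triangular, the off-diagonal zeroth-order coupling terms $-\Id$ and $-\IP[f\,\cdot\,]$ are bounded perturbations relative to the diagonal (here using $f\in\L^n$ so that $\IP[fu]\in\L^q_\sigma$), and maximal regularity is stable under such perturbations. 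I would then identify the trace space $(X_0,X_1)_{\mu-1/p,p}$ with the product of Besov spaces appearing in the hypothesis, using the square-root characterizations of Jerison--Kenig and Gabel--Tolksdorf together with the appendix's interpolation identities; the specific choice $\mu=\tfrac{n}{2q}+\tfrac1p-\tfrac12$ is exactly what makes the trace space land on the critical/subcritical Besov spaces $\B^{n/q-2}_{q,p,0}\times\B^{n/q^*}_{q^*,p}\times\B^{n/q-1}_{q,p,0,\sigma}$, and the constraints $q<n$, $\tfrac nq+\tfrac2p\le 3$, and the range condition on $1/q$ guarantee $\mu\in(1/p,1]$ and that all spaces are well-defined.

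Next I would analyze the nonlinearity $\Phi(U,U)$, which collects the quadratic terms $-\divergence(u\nabla v)$, $-w\cdot\nabla u$, $-w\cdot\nabla v$, $-(w\cdot\nabla)w$ (the latter processed through $\IP$), landing in $X_0=\W^{-1,q}_0(\Omega)\times\L^{q^*}(\Omega)\times\L^q_\sigma(\Omega)$. The key is a bilinear estimate of the form
\[
 \|\Phi(U,V)\|_{\L^p_\mu(0,T;X_0)} \le C\,\|U\|_{\IE_{p,\mu}^T}\,\|V\|_{\IE_{p,\mu}^T},
\]
with $C$ independent of $T$. This is obtained by interpolating the embeddings of $\IE_{p,\mu}^T(X_0,X_1)$ into mixed Lebesgue--Sobolev spaces (time-weighted Sobolev embedding, the mixed-derivative theorem, and trace embeddings), writing each quadratic term as a product and applying Hölder in space and time; for instance $\divergence(u\nabla v)$ is estimated in $\W^{-1,q}_0$ by $\|u\nabla v\|_{\L^q}\le\|u\|_{\L^{q^*}}\|\nabla v\|_{\L^n}$ and then distributing the time integrability via the scaling-invariant exponents. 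The parameter $\mu$ was chosen precisely so that the scaling of the problem (recorded in the introduction) matches the scaling of these function spaces, so the estimate closes without a positive power of $T$ — this is the hallmark of the critical setting.

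Having both maximal regularity for $\cA$ and the bilinear estimate, the proof concludes by a standard contraction-mapping argument: define the solution map $U\mapsto \cL^{-1}(\Phi(U,U))$ where $\cL^{-1}$ solves $\partial_tV+\cA V=g$, $V(0)=U_0$, on the ball of radius $2C\|U_0\|_{\text{trace}}$ in $\IE_{p,\mu}^T(X_0,X_1)$; for $T$ small enough the contribution of the initial data to the free solution is controlled, and — because $C$ does not depend on $T$ in the critical scaling — either smallness of $T$ (via the decay of the part of the free solution not coming from the trace term, or via absolute continuity of the norm when $\mu<1$) or smallness of the data makes the map a self-map and a strict contraction; Banach's fixed point theorem then yields the unique strong solution, and uniqueness in the full space follows by the usual argument localizing in time. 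The main obstacle I anticipate is the bilinear estimate for the chemotaxis term $-\divergence(u\nabla v)$: it is the most singular term (both a derivative loss through $\divergence$ landing in $\W^{-1,q}_0$ and a gradient on $v$), and getting it to close with a $T$-independent constant requires the sharp Besov/Bessel-potential embeddings for the time-weighted spaces in Lipschitz domains — which is exactly why the optimal square-root characterizations of the Neumann Laplacian are invoked — rather than the $\H^2$-regularity one would use in smooth domains. A secondary technical point is checking carefully that the endpoint case $\tfrac nq+\tfrac2p=3$ (genuinely critical) is admissible, where one cannot gain from a power of $T$ and must instead use the smallness of the trace-free part of the mild solution on small intervals.
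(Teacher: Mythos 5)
Your overall architecture matches the paper exactly: establish maximal $\L^p_\mu$-regularity for the lower-triangular operator $\cA$, prove a $T$-independent bilinear estimate for $\Phi$ in $\IE^T_{p,\mu}$, identify the trace space $(X_0,X_1)_{\mu-1/p,p}$ with the stated Besov spaces via the square-root characterizations and the appendix interpolation result, and close by a fixed-point argument in which the free semigroup term $t\mapsto \e^{-t\cA}U_0$ becomes small in $\IE^T_{p,\mu}$ as $T\to 0$. The paper carries out precisely this program via Proposition~\ref{prop:Amaxreg}, Lemmas~\ref{lem:bilinearmap}--\ref{lem:bilinearestimates}, and Lemma~\ref{Lem: fixed point}.

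One imprecision worth flagging. You argue that $\cA$ has maximal regularity because the off-diagonal couplings $-\Id$ and $-\IP[f\,\cdot\,]$ are ``bounded perturbations relative to the diagonal'' and ``maximal regularity is stable under such perturbations.'' As stated, this is not quite a valid mechanism: the coupling $-\Id : \W^{1,q}(\Omega)\to\L^{q^*}(\Omega)$ is bounded from the domain into the second ground space with a \emph{fixed, non-small} constant (it is a critical Sobolev embedding, so the relative bound cannot be made arbitrarily small by interpolation), and genuine relatively bounded perturbations preserve maximal regularity only when the relative bound is sufficiently small. What actually makes the argument work is the strictly lower-triangular structure, which the paper exploits by solving the scalar problems in cascade: first the $u$-equation, then feed $u\in\L^p(0,T;\W^{1,q})\hookrightarrow\L^p(0,T;\L^{q^*})$ into the $v$-equation, then feed $u$ into the $w$-equation. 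A second small caveat: the cited results of Shen, Kunstmann--Weis and Gabel--Tolksdorf give maximal regularity on $\L^q$-type spaces, not directly on $\W^{-1,q}_0(\Omega)$; the first block requires the transfer via the square-root property~\eqref{Eq: Square roots} and duality, as carried out at the start of the proof of Proposition~\ref{prop:Amaxreg}.
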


Next, we formulate our global existence and stability result. We start by describing the set of stationary solutions for which we show exponential stability. For this purpose, we define the mean value of a distribution $u_0 \in \B^{n/q-2}_{q , p , 0}(\Omega)$ as
\[
    u_{0 , \Omega} := \frac{1}{\lvert \Omega \rvert} \langle u_0 , 1 \rangle_{\B^{n / q - 2}_{q , p , 0} , \B^{2 - n / q}_{q^{\prime} , p^{\prime}}}.
\]
Now, a stationary solution to~\eqref{Eq: KSNS} is given by 
\[
 (u_s , v_s , w_s)^{\top} \quad \text{where} \quad u_s = v_s = u_{0 , \Omega}
\]
and $w_s$ satisfies the stationary Navier-Stokes system
\begin{align*}
 \left\{
\begin{aligned}
   - \Delta w_s + (w_s \cdot \nabla) w_s + \nabla \pi_s 
   &= f u_s && \text{in } \Omega, \\
   \divergence{w_s} &= 0 && \text{in } \Omega, \\
   w_s &= 0 && \text{on } \partial \Omega.
\end{aligned} \right.
\end{align*}
Observe that the existence of a stationary weak solution is classical whenever 
$f \in \W^{-1 , 2} (\Omega ; \IR^n)$. In our global existence and stability 
result, the following smallness condition on the product $\lvert u_s \rvert f$ will be crucial.

\begin{assumption}
\label{Ass: Forcing}
For some $\delta > 0$, the data $u_0 \in \B^{n / q - 2}_{q , p , 0} (\Omega)$ 
and $f \in \L^n (\Omega ; \IR^n)$ satisfy 
\begin{enumerate}
 \item 
 $\lvert u_s \rvert \| f \|_{\W^{-1 , 2} (\Omega)} \leq \delta$ in the case $n = 2$;
 \item 
 $\lvert u_s \rvert \| f \|_{\L^{3/2} (\Omega)} \leq \delta$ in the case $n = 3$.
\end{enumerate} 
\end{assumption}

\begin{theorem}[Global existence and stability]
\label{Thm: Existence/Stability}
Let $\Omega \subset \IR^n$ be a bounded  Lipschitz domain and $n=2$ or $n=3$. 
Then there exists $\varepsilon= \varepsilon(\Omega) >0$, depending on the Lipschitz 
geometry of $\Omega$ with the following property. 
For every $p,q \in (1,\infty)$ satisfying
\begin{equation}
\label{Eq: Global p and q}
q<n \quad \mbox{and} \quad \frac{n}{q} + \frac{2}{p} \leq 3 \quad \mbox{and} \quad 
\frac{n+1}{2n} - \varepsilon  < \frac{1}{q} < 
\min\Big(\frac{n+1}{2n} + \varepsilon , \frac{2}{n} \Big)
\end{equation}
there exist $\delta, \lambda , C > 0$ such that for all $0 < \kappa < C$ and all 
\[
(u_0,v_0,w_0) \in \B^{n/q-2}_{q , p , 0}(\Omega) \times \B^{n/q^*}_{q^* , p}(\Omega)
 \times \B^{n/q-1}_{q ,p, 0,\sigma}(\Omega) 
\]
with
\[
 \| u_0 - u_s \|_{\B^{n / q - 2}_{q , p , 0} (\Omega)} + \| v_0 - v_s \|_{\B^{n/q^*}_{q^* , p}(\Omega)}
 + \| w_0 - w_s \|_{\B^{n/q-1}_{q ,p, 0,\sigma}(\Omega)} \leq \kappa
\]
and all $f \in \L^n (\Omega;\IR^n)$ satisfying Assumption~\ref{Ass: Forcing} for this 
$\delta$, there exists a unique global strong solution $(u , v , w)$ 
to~\eqref{Eq: KSNS} satisfying
\[
 \| t \mapsto \e^{\lambda t} [ (u (t) , v (t) , w (t)) - (u_s , v_s , w_s) ] 
 \|_{\IE_{p , \mu}^{\infty} (\cX_0 , \cX_1)} \leq 2 \kappa, \quad \mbox{where} \quad \mu = \frac{n}{2q} + \frac{1}{p} - \frac{1}{2}.
\]
In particular,
\[
 \e^{\lambda t} \bigl\| (u (t) , v (t) , w (t)) - (u_s , v_s , w_s) 
 \bigr\|_{(\cX_0 , \cX_1)_{\mu - 1 / p , p}} \xrightarrow[t\to \infty]{} 0.
\]
\end{theorem}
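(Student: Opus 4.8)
The plan is to recast \eqref{Eq: KSNS} as the abstract problem \eqref{Eq: abstract system} linearized around the stationary state, to upgrade the underlying linear theory to exponentially weighted maximal regularity on the half-line, and to close a fixed-point argument in the critical space $\IE_{p,\mu}^{\infty}(\cX_0,\cX_1)$. First I would record that the mean value of $u$ is conserved: since $w$ is solenoidal with $w=0$ on $\partial\Omega$ and $u$ obeys a homogeneous Neumann condition, testing the $u$-equation of \eqref{Eq: KSNS} with the constant $1$ gives $\frac{\d}{\d t}\langle u(t),1\rangle=0$, so $u(t)_{,\Omega}=u_{0,\Omega}=u_s$ for all $t$ and $\tilde u:=u-u_s$ takes values in $\W^{-1,q}_{\mathrm{av}}(\Omega)$ — this is why the statement is posed on $\cX_0$. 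Writing $\tilde U:=U-U_s$ and using that $u_s=v_s=u_{0,\Omega}$ are constant (hence $\nabla v_s=0$, and $\IP[f u_s]$ is absorbed into the stationary Navier--Stokes equation for $w_s$), the triple $\tilde U$ solves $\partial_t\tilde U+\cA_s\tilde U=\Phi(\tilde U,\tilde U)$, $\tilde U(0)=U_0-U_s$, with the same bilinear map $\Phi$ as in \eqref{Eq: abstract system} and $\cA_s=\cA+B_s$, where $B_s$ collects the linear contributions of the profile: the drift $w_s\cdot\nabla(\cdot)$ in the $u$-equation, the terms $(w_s\cdot\nabla)(\cdot)+((\cdot)\cdot\nabla)w_s$ in the $w$-equation, and the coupling $\tilde v\mapsto-\divergence(u_s\nabla\tilde v)$ in the $u$-equation.

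The core of the proof is to show that there is $\lambda_0>0$ such that $\cA_s-\lambda$ has maximal $\L^p_\mu$-regularity on $(0,\infty)$ for every $0<\lambda\le\lambda_0$, with constant uniform in $\lambda$. On $\cX_0$ the operator $\cA$ is lower triangular with diagonal blocks the Neumann Laplacian on $\W^{-1,q}_{\mathrm{av}}(\Omega)$, $-\Delta_{q^*}+\Id$, and the Stokes operator $A$; for the admissible range of $q$ in \eqref{Eq: Global p and q} each of these is invertible, sectorial and has maximal regularity on the half-line (by the results recalled in Section~\ref{Sec: Preliminaries and Main Results}), while the off-diagonal couplings $-\Id$ and $-\IP[f\,\cdot\,]$ are bounded from the corresponding domains into the target spaces ($\W^{1,q}(\Omega)\hookrightarrow\L^{q^*}(\Omega)$, and $f\in\L^n$ gives $\W^{1,q}(\Omega)\ni u\mapsto\IP[fu]\in\L^q_\sigma(\Omega)$), so $\cA$ has maximal $\L^p_\mu$-regularity on $(0,\infty)$ with spectrum in $\{\Re z\ge 2\lambda_0\}$ for some $\lambda_0>0$. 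For $B_s$: the constraint $n/2<q<n$ in \eqref{Eq: Global p and q} together with the optimal square-root identification $\dom(\Delta_{q^*}^{1/2})=\W^{1,q^*}(\Omega)$ (Jerison--Kenig, Gabel--Tolksdorf) makes $-\divergence(u_s\nabla\,\cdot\,)$ a relatively bounded perturbation of $\cA$ with arbitrarily small relative bound, since for $\tilde v\in\dom(\Delta_{q^*})$ one has $\|\divergence(u_s\nabla\tilde v)\|_{\W^{-1,q}(\Omega)}\le|u_s|\,\|\nabla\tilde v\|_{\L^q(\Omega)}\lesssim|u_s|\,\|\Delta_{q^*}^{1/2}\tilde v\|_{\L^{q^*}(\Omega)}+\text{l.o.t.}$, which is a half-order gain; the first-order drift terms are relatively bounded by a constant times $\|w_s\|_{\L^n(\Omega)}$, which is small because the standard stationary Navier--Stokes estimate gives $\|w_s\|_{\W^{1,2}(\Omega)}\lesssim|u_s|\,\|f\|_{\W^{-1,2}(\Omega)}$ for $n=2$ (resp.\ $\lesssim|u_s|\,\|f\|_{\L^{3/2}(\Omega)}$ for $n=3$) and hence $\|w_s\|_{\L^n(\Omega)}\lesssim\delta$ under Assumption~\ref{Ass: Forcing}. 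A perturbation theorem for maximal regularity — stability under small, respectively lower-order, relatively bounded perturbations, combined with the triangular structure of the uncoupled part — then yields the claim, together with the fact that $\cA_s-\lambda$ still generates an exponentially stable analytic semigroup for $\lambda<\lambda_0$.

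Next I would put $V(t):=\e^{\lambda t}\tilde U(t)$ with $\lambda\in(0,\lambda_0]$; bilinearity of $\Phi$ turns the equation into $\partial_t V+(\cA_s-\lambda)V=\e^{-\lambda t}\Phi(V,V)$, $V(0)=U_0-U_s$, in which the nonlinearity even carries the favourable factor $\e^{-\lambda t}\le 1$. Combining the uniform maximal-regularity bound from the previous step with the scaling-critical, interval-independent bilinear estimate $\|\Phi(V_1,V_2)\|_{\L^p_\mu(0,\infty;\cX_0)}\lesssim\|V_1\|_{\IE_{p,\mu}^{\infty}}\,\|V_2\|_{\IE_{p,\mu}^{\infty}}$ that underlies Theorem~\ref{thm:localex} (valid because $q<n$, $n/q+2/p\le 3$ and $\mu=n/(2q)+1/p-1/2$), the solution operator of the linear problem is a self-map and a contraction on the ball of radius $2\kappa$ in $\IE_{p,\mu}^{\infty}(\cX_0,\cX_1)$, provided $\kappa<C$ for a suitable $C>0$ and $\|U_0-U_s\|_{(\cX_0,\cX_1)_{\mu-1/p,p}}\le\kappa$ — the latter being guaranteed by the Besov bound in the statement via the trace characterization recalled in the appendix. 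Banach's fixed-point theorem yields the unique global $V$ with $\|V\|_{\IE_{p,\mu}^{\infty}}\le 2\kappa$, i.e.\ the asserted exponentially weighted estimate, and uniqueness in the full solution class follows as in Theorem~\ref{thm:localex}. The final convergence assertion then comes from the embedding $\IE_{p,\mu}^{\infty}(\cX_0,\cX_1)\hookrightarrow\BC([0,\infty);(\cX_0,\cX_1)_{\mu-1/p,p})$ together with the splitting of $V$ into the solution of the homogeneous linear equation with datum $U_0-U_s$ (which tends to $0$ since $\cA_s-\lambda$ is exponentially stable) and the part driven by $\e^{-\lambda t}\Phi(V,V)\in\L^p_\mu(0,\infty;\cX_0)$, whose $\L^p_\mu$-tail vanishes.

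The step I expect to be the main obstacle is the exponentially weighted maximal regularity of $\cA_s$: in a merely Lipschitz domain neither the Neumann Laplacian nor the Stokes operator has an $\H^2$-domain, so one must stay within the narrow $q$-window of \eqref{Eq: Global p and q} and exploit the optimal square-root descriptions; and since the coefficient $u_s$ in $-\divergence(u_s\nabla\,\cdot\,)$ need not be small, the exponential stability of $\cA_s$ cannot be obtained from a pure smallness argument but only from combining the smallness of $w_s$ (Assumption~\ref{Ass: Forcing}) with the genuine regularity gain in that coupling term and the triangular structure of $\cA$.
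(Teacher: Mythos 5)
Your overall architecture matches the paper's: decompose around the stationary state, pass to the exponentially weighted unknown $V=\e^{\lambda t}(U-U_s)$, and close a fixed point in $\IE_{p,\mu}^{\infty}(\cX_0,\cX_1)$ using the interval-uniform bilinear estimate (Lemma~\ref{lem:bilinearestimates}) together with global maximal regularity of the shifted linearization. The small omission that $\cB_s$ also contains the drift $w_s\cdot\nabla$ acting on $v$ (the $(2,2)$ entry) is easily repaired. The genuine gap is in your treatment of the coupling $u_s\Delta$.

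You assert that the coefficient $u_s$ ``need not be small'', reasoning that $-\divergence(u_s\nabla\,\cdot\,)$ gains half a derivative (so it is relatively bounded with relative bound zero) and that this, combined with the lower-triangular structure of $\cA$, still gives exponential stability of $\cA_s-\lambda$. This does not follow. Lower-order relatively bounded perturbations preserve maximal $\L^p_\mu$-regularity on bounded intervals, but not invertibility, hence not global maximal regularity on $(0,\infty)$ and not exponential stability: they can push spectrum across the imaginary axis. Moreover, the triangular structure of $\cA$ is destroyed precisely by $\cB_s$, which inserts the off-diagonal $(1,2)$ entry $u_s\Delta$ coupling the $v$-variable back into the $u$-equation. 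Concretely, on the mode $\phi_k$ with $-\Delta\phi_k=\lambda_k\phi_k$, $k\ge1$, the upper-left $2\times2$ block of $\cA+\cB_s$ (ignoring the small drift) acts as
\begin{align*}
\begin{pmatrix} \lambda_k & -u_s\lambda_k \\ -1 & \lambda_k+1 \end{pmatrix},
\end{align*}
whose determinant $\lambda_k(\lambda_k+1-u_s)$ becomes negative as soon as $u_s>\lambda_1+1$, producing a negative eigenvalue of $\cA+\cB_s$ and hence an exponentially growing mode. This is the familiar Turing instability of the constant Keller--Segel state, and your argument does not exclude it. The paper therefore imposes, in Lemma~\ref{Lem: Perturbation} (and again in Remark~4.3), the additional smallness $\lvert u_s\rvert\le\delta$, and uses it explicitly in the estimate
$\|u_s\Delta\widehat v\|_{\W^{-1,q}_{\mathrm{av}}}\le C\delta\|(\Id-\Delta_{q^*})\widehat v\|_{\L^{q^*}}$ --- that is, $u_s\Delta$ is handled as a \emph{small} (relative bound $C\delta$) perturbation, not as a lower-order one. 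Without this smallness the shifted operator $\cA-\omega\Id+\cB_s$ need not be invertible on $\cX_0$, and the global fixed-point argument on $(0,\infty)$ fails. You should add $\lvert u_s\rvert\le\delta$ to your smallness hypotheses and rephrase the perturbation step accordingly.
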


For the boundedness and positivity of solutions we require slightly more regularity 
conditions on the data. Therefore, we introduce
\[
 Y_0 := \L^q(\Omega)\times\L^{q^{*}}(\Omega)\times\L^q_{\sigma}(\Omega) \quad \text{and} \quad \cY_0 := \L^q_{\mathrm{av}}(\Omega)\times\L^{q^{*}}(\Omega)\times\L^q_{\sigma}(\Omega)
\]
and
\[
    Y_1 := \dom(\Delta_q) \times \dom(\Delta_{q^*}) \times \dom(A) \quad \text{and} \quad \cY_1 := \dom(\Delta_q) \cap \L^q_{\mathrm{av}} (\Omega) \times \dom(\Delta_{q^*}) \times \dom(A).
\]
and let $\IE_{r , 1}^{\infty} (\cY_0 , \cY_1)$ be defined as before.

\begin{theorem}[Boundedness and positivity]
\label{Thm: Boundedness and positivity}
In the setting of Theorem~\ref{Thm: Existence/Stability} there exists $\lambda, \delta , C > 0$ and 
$r_0 > 1$ such that for all $0 < \kappa < C$, all $r > r_0$ and all
\[
 (u_0 , v_0 , w_0) \in \big( Y_0 , Y_1 \big)_{1 - 1 / r , r} \quad \text{with} 
 \quad \| (u_0 - u_s , v_0 - v_s , w_0 - w_s) \|_{( \cY_0 , \cY_1 )_{1 - 1 / r , r}} \leq \kappa
\]
the unique solution constructed in Theorem~\ref{Thm: Existence/Stability} satisfies
\[
 \| t \mapsto \e^{\lambda t} [ (u (t) , v (t) , w (t)) - (u_s , v_s , w_s) ] 
 \|_{\IE_{r , 1}^{\infty} (\cY_0 , \cY_1)} \leq 2 \kappa.
\]
Moreover, it is globally bounded in space and time, i.e.,
\[
 (u , v , w) \in \L^{\infty} (0 , \infty ; \L^{\infty} (\Omega) 
 \times \L^{\infty} (\Omega) \times \L^{\infty} (\Omega ; \IR^n)),
\]
and, if $u_0 \geq 0$ a.e., then $u(t) \geq 0$ for all a.e.\@ $t > 0$ and a.e.\@ $x \in \Omega$.
\end{theorem}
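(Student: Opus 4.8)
\emph{Strategy.} The plan is to establish the three assertions in turn: first the improved global regularity estimate in $\IE_{r,1}^\infty(\cY_0,\cY_1)$ with the exponential weight, then global $\L^\infty$-boundedness, and finally positivity of $u$. For the first, I would rerun the fixed-point scheme used to prove Theorem~\ref{Thm: Existence/Stability}, but now in the Banach space $\{\,\tilde U:\;\e^{\lambda\cdot}\tilde U\in\IE_{r,1}^\infty(\cY_0,\cY_1)\,\}$, set up for the unknown $\tilde U=(u,v,w)-(u_s,v_s,w_s)$ around the stationary state. This is legitimate because the linearization at $(u_s,v_s,w_s)$ acts on $\cY_0$ in the same way it acts on $\cX_0$: the diagonal blocks $-\Delta_q$ on $\L^q_{\mathrm{av}}(\Omega)$, $-\Delta_{q^*}+\Id$ on $\L^{q^*}(\Omega)$ and the Stokes operator $A$ all generate exponentially stable analytic semigroups for $q$ in the range~\eqref{Eq: Global p and q}, and the off-diagonal couplings are lower order relative to $\cY_1\to\cY_0$ (note $\dom(\Delta_q)\hookrightarrow\W^{1,q}(\Omega)\hookrightarrow\L^{q^*}(\Omega)$ by the square-root characterization and Sobolev embedding, so $\IP[fu]\in\L^q_\sigma(\Omega)$ for $u\in\dom(\Delta_q)$), so the linearization again has maximal $\L^r$-regularity with the weight $t\mapsto\e^{\lambda t}$ on $(0,\infty)$ exactly as in Theorem~\ref{Thm: Existence/Stability}. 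The quadratic terms of $\Phi$ are estimated as before, by combining the mixed-derivative/trace embeddings of $\IE_{r,1}^\infty(\cY_0,\cY_1)$ with Sobolev embeddings; the extra room needed to close the estimates on the whole half-line comes from choosing $r>r_0$ large (so that the trace space is close to $\cY_1$) together with the structural fact $q^*>\tfrac n2$ (equivalent to $\tfrac1q<\tfrac2n$ in~\eqref{Eq: Global p and q}). Since $(Y_0,Y_1)_{1-1/r,r}\hookrightarrow(X_0,X_1)_{\mu-1/p,p}$ and, over each finite subinterval, $\IE_{r,1}^\infty(\cY_0,\cY_1)\hookrightarrow\IE_{p,\mu}^\infty(\cX_0,\cX_1)$, the solution obtained this way coincides by uniqueness with the one furnished by Theorem~\ref{Thm: Existence/Stability} and satisfies the claimed bound $\le 2\kappa$.

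\emph{Global boundedness.} By the trace theorem, $\tilde U\in\mathrm{BUC}([0,\infty);(\cY_0,\cY_1)_{1-1/r,r})$. For $r\ge 2$ the $v$-component of this space embeds into $\dom((-\Delta_{q^*})^{1/2})=\W^{1,q^*}(\Omega)\hookrightarrow\L^\infty(\Omega)$ (here $q^*>n$, which follows from~\eqref{Eq: Global p and q}), so $v$ is already bounded on $(0,\infty)\times\Omega$; likewise $w(t)\in\dom(A^{1/2})\hookrightarrow\W^{1,q}(\Omega)\hookrightarrow\L^{q^*}(\Omega)$ uniformly in $t$, which one upgrades to $w\in\L^\infty(0,\infty;\L^\infty(\Omega;\IR^n))$ by a further maximal-regularity step for the Stokes system (writing $(w\cdot\nabla)w=\divergence(w\otimes w)$ and using $uf\in\L^n(\Omega;\IR^n)$). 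To bound $u$, I would rewrite its equation as $\partial_t u-\Delta u=-\divergence\bigl(u(\nabla v+w)\bigr)$ (using $\divergence w=0$), a scalar divergence-form parabolic equation with homogeneous Neumann data and drift $\nabla v+w\in\L^\infty(0,\infty;\L^m(\Omega;\IR^n))$ for some $m>n$; a De Giorgi-Nash-Moser iteration --- valid with Neumann conditions on a bounded Lipschitz domain and a subcritical drift --- then yields boundedness of $u$, and combined with the exponential decay of $\tilde U$ and the integrability of $u_0$ coming from the trace embedding this gives $u\in\L^\infty(0,\infty;\L^\infty(\Omega))$. Finally $(u_s,v_s,w_s)$ is bounded ($u_s=v_s=u_{0,\Omega}$ is a constant; $w_s$ is the small stationary Navier-Stokes solution, bounded by elliptic estimates for the Stokes system in Lipschitz domains), which completes the proof of global boundedness.

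\emph{Positivity.} Assume $u_0\ge 0$ a.e. By the previous stage $w$ is divergence-free with $w|_{\partial\Omega}=0$, and $\Delta v\in\L^r_{\loc}(0,\infty;\L^{q^*}(\Omega))$ with $q^*>\tfrac n2$. Testing the $u$-equation $\partial_t u-\Delta u+w\cdot\nabla u+\divergence(u\nabla v)=0$ (homogeneous Neumann data) against the negative part $-u^-$ --- legitimate for a.e.\ $t$ after approximating $s\mapsto s^-$ by smooth convex functions, since $u\in\IE_{r,1}^\infty(\cY_0,\cY_1)$ gives $u^-\in\L^\infty_{\loc}((0,\infty);\L^2(\Omega))\cap\L^2_{\loc}((0,\infty);\W^{1,2}(\Omega))$ --- the convective term drops by $\divergence w=0$ and $w|_{\partial\Omega}=0$, and integrating the chemotaxis term by parts (with $\partial_\nu v=0$) yields
\[
 \tfrac12\tfrac{\d}{\d t}\|u^-(t)\|_{\L^2(\Omega)}^2+\|\nabla u^-(t)\|_{\L^2(\Omega)}^2=\tfrac12\int_\Omega|u^-(t)|^2\,\Delta v(t)\,\d x\le\tfrac12\|\nabla u^-(t)\|_{\L^2(\Omega)}^2+C\,\|\Delta v(t)\|_{\L^{q^*}(\Omega)}^2\,\|u^-(t)\|_{\L^2(\Omega)}^2,
\]
the last step by H\"older's, the Gagliardo-Nirenberg and Young's inequalities ($q^*>\tfrac n2$ is exactly what makes the absorption of the gradient term possible). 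Since $t\mapsto\|\Delta v(t)\|_{\L^{q^*}(\Omega)}^2$ is locally integrable and $u^-(0)=0$, Gr\"onwall's inequality forces $u^-\equiv 0$, i.e.\ $u(t)\ge 0$ a.e.\ for a.e.\ $t>0$.

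\emph{Main obstacle.} The crux is the bootstrap to $\L^\infty$ in the second stage. In a Lipschitz domain one does not have $\dom(\Delta_q)=\W^{2,q}(\Omega)$, and $\nabla v,\nabla w$ need not be bounded, so $\L^\infty$ cannot be reached in a single smoothing step; one gains only the modest Sobolev surplus afforded by the partial elliptic regularity available ($\dom(\Delta_q),\dom(A)\hookrightarrow\W^{1,q}(\Omega)$, and $\dom(\Delta_{q^*})\hookrightarrow\W^{1,q^*}(\Omega)\hookrightarrow\L^\infty$ only because $q^*>n$), so a finite iteration of maximal-regularity / heat-kernel-smoothing (respectively De Giorgi-Nash-Moser) steps is required, carried out while keeping all constants uniform in $t\in(0,\infty)$ --- which is precisely where the exponential decay and the a priori bound $\le 2\kappa$ of the first stage are indispensable. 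A secondary point needing care is confirming that the full coupled linearization retains its exponential stability on the smaller space $\cY_0$, so that weighted maximal $\L^r$-regularity holds on the half-line; this follows the pattern already set in the proof of Theorem~\ref{Thm: Existence/Stability}.
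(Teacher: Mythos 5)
Your reconstruction of stage one --- re-running the weighted fixed-point iteration on the pair $(\cY_0,\cY_1)$ after establishing maximal $\L^r$-regularity for the shifted linearization $\cA+\cB_s-\omega\Id$ --- follows the paper's line (Lemma~\ref{Lem: Bilinear estimates bounded} together with the $\cY_0$-analogue of Lemma~\ref{Lem: Perturbation}) and then appeals to uniqueness to identify the two solutions. The remaining two stages, however, take genuinely different routes from the paper's.

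\emph{Boundedness.} You reach $\L^\infty$ for $u$ and $w$ only after a further maximal-regularity bootstrap for $w$ and a De Giorgi--Nash--Moser iteration for the scalar drift equation governing $u$. The paper avoids both: Lemma~\ref{Lem: Embedding Linfty} shows \emph{directly} that $(\cY_0,\cY_1)_{1-1/r,r}\hookrightarrow\L^\infty(\Omega)\times\L^\infty(\Omega)\times\L^\infty(\Omega;\IR^n)$ for $r$ large, because the \emph{full} domains satisfy $\dom(\Delta_q),\dom(\Delta_{q^*}),\dom(A)\hookrightarrow\W^{1,q^*}(\Omega)\hookrightarrow\L^\infty(\Omega)$ --- this uses the square-root description~\eqref{Eq: Square roots} iterated once more and Lemma~\ref{Lem: Embeddings fractional power domains} with the exponent $\alpha'=1$, not merely $\dom(A^{1/2})\hookrightarrow\W^{1,q}\hookrightarrow\L^{q^*}$ as you write. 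Once that embedding is in hand, the trace theorem $\IE^\infty_{r,1}(\cY_0,\cY_1)\hookrightarrow\mathrm{BUC}([0,\infty);(\cY_0,\cY_1)_{1-1/r,r})$ gives all three $\L^\infty$ bounds simultaneously and uniformly in $t$, with no iteration at all. Your route is not wrong in principle --- the drift $\nabla v+w$ does belong to $\L^\infty(0,\infty;\L^{q^*}(\Omega;\IR^n))$ with $q^*>n$, which makes the De Giorgi iteration admissible, and the stage-one a priori bound keeps the constants uniform --- but you have overlooked the stronger embedding $\dom(\Delta_q)\hookrightarrow\W^{1,q^*}(\Omega)$ that renders the entire detour unnecessary.

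\emph{Positivity.} You test the $u$-equation against $-u^-$ and close by Gagliardo--Nirenberg, Young and Gr\"onwall, relying on $\Delta v\in\L^2_{\loc}(0,\infty;\L^{q^*}(\Omega))$ and on the Stampacchia chain-rule lemma for $u\mapsto u^-$. This is correct here, since the needed regularity of $v$ is available from $\IE^\infty_{r,1}(\cY_0,\cY_1)$. The paper's Proposition~\ref{Prop: Positivity} proceeds differently: it tests against the bounded Lipschitz truncation $\varphi_\eta(u)$ of $-\ind_{\{u<0\}}$ and lets $\eta\to 0$. In that argument $\Delta v$ never appears; only $v\in\L^2(0,T;\H^1(\Omega))$, $w\in\L^2(0,T;\L^2_\sigma(\Omega))$ and $u\in\W^{1,1}(0,T;\L^1(\Omega))\cap\L^2(0,T;\H^1(\Omega))$ are required, and no Gagliardo--Nirenberg or Gr\"onwall step is used. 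Both routes reach the conclusion; yours is the familiar $\L^2$-energy version, the paper's is a truncation argument valid under strictly weaker hypotheses on $v$, which is why it is stated as a stand-alone proposition that is then \emph{applied} in the proof of the theorem.
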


The proofs of our main results rely on optimal $\L_{\mu}^p$-regularity properties 
of the linear part of~\eqref{Eq: KSNS}. If $\Omega$ would have smooth boundaries, the 
result would follow from rather classical results on the Laplacian and the Stokes operator. 
This is, however, not the case for Lipschitz domains as the $\dom(\cA)$ is in general unknown. 
To counteract this problem, we use optimal descriptions of domains of square roots of our 
linear operators at hand.

The domains of the square roots of $- \Delta_q$ as well as $A$ can be characterized 
by first-order Sobolev spaces, i.e.,
\begin{align}
\label{Eq: Square roots}
 \dom((- \Delta_q)^{1/2}) = \W^{1 , q} (\Omega) \quad \text{and} 
 \quad \dom(A^{1 / 2}) = \W^{1 , q}_{0 , \sigma} (\Omega),
\end{align}
whenever $q$ satisfies~\eqref{Eq: Lipschitz condition on q} with some possibly smaller 
$\eps$, see~\cite[Thm.~1]{Jerison_Kenig_1989} for the Laplacian and~\cite[Thm.~1.1]{Tolksdorf} as well as~\cite[Thm.~1.3]{Gabel_Tolksdorf} for the Stokes operator. In the following let $\eps$ be sufficiently small such that all these conditions can be met and let $q$ satisfy the more 
restrictive condition
\begin{align}
\label{Eq: Condition on Lebesgue exponent}
    \frac{n + 1}{2 n} - \frac{\eps}{2} < \frac{1}{q} < \frac{n + 1}{2 n} + \frac{\eps}{2}.
\end{align}
Moreover, there is no harm in assuming that
\begin{align}
\label{Eq: Smallness epsilon}
 \eps \leq \frac{2}{n}.
\end{align}
Indeed, in the case $\eps > \frac{2}{n}$ the conditions~\eqref{Eq: Lipschitz condition on q} and~\eqref{Eq: Condition on Lebesgue exponent} are true for all $1 < q < \infty$. However,~\eqref{Eq: Smallness epsilon} will help us to write certain conditions more concisely. \par

Note that the Neumann Laplacian as well as the Stokes operator admit a 
bounded $\H^{\infty}$-calculus on each components of $X_0$, 
see~\cite[Thm.~1.3]{Egert} and~\cite[Thm.~16]{Kunstmann_Weis}. (For the Neumann Laplacian on the negative spaces $\W^{-1 , q}_0 (\Omega)$ or $\W^{- 1 , q}_{\mathrm{av}} (\Omega)$, respectively, one proceeds similar as in Proposition~\ref{prop:Amaxreg} below: One transfers the boundedness of the $\H^{\infty}$-calculus from $\L^q (\Omega)$ to $\W^{1 , q} (\Omega)$ or $\W^{1 , q} (\Omega) \cap \L^{q}_{\mathrm{av}} (\Omega)$, respectively, by using~\eqref{Eq: Square roots} and then one uses duality to reach the negative space.)
Hence, the complex interpolation space $X_{\alpha}$ between $X_0$ and $X_1$, for $0 < \alpha < 1$, coincides 
with the domains of fractional powers, which means we have that
\[
 X_{\alpha} := \big[ X_0 , X_1 \big]_{\alpha} = \H^{-1 + 2 \alpha , q} (\Omega) 
 \times \dom((- \Delta_{q^*})^{\alpha}) \times \dom(A^{\alpha}).
\]
If $\alpha = 1/2$, then we have
\begin{align}
\label{Eq: X_1/2}
 X_{1/2} = \L^q (\Omega) \times \W^{1 , q^*}(\Omega) \times \W^{1 , q}_{0 , \sigma} (\Omega)
\end{align}
by~\eqref{Eq: Square roots}. Moreover, if $\alpha - 1 / 2 > 0$ is small enough, 
the domains $\dom((- \Delta_{q^*})^{\alpha})$ and $\dom(A^{\alpha})$ optimally embed 
into the Sobolev scale as the following lemma shows. 

\begin{lemma}
\label{Lem: Embeddings fractional power domains}
Let $q \in (1 , \infty)$ be subject to~\eqref{Eq: Condition on Lebesgue exponent}. For $0 < \delta < \frac{n \eps}{4}$ and $0 < \delta^{\prime} \leq \frac{1}{2}$ define
\[
    \alpha = \frac{1}{2} + \delta \quad \text{and} \quad \alpha^{\prime} = \frac{1}{2} + \delta^{\prime}.
\]
Then, 
\begin{align}
\label{Eq: Fractional powers}
 \dom((- \Delta_{q^*})^{\alpha}) \hookrightarrow \W^{1 , r} (\Omega) 
 \quad \text{and} \quad 
 \dom(A^{\alpha^{\prime}}) \hookrightarrow \W^{1 , \kappa}_{0 , \sigma} (\Omega)
\end{align}
where $r$ and $\kappa$ satisfy
\begin{align}
\label{Eq: Definition of kappa}
 \frac{1}{q^*} - \frac{2 \delta}{n} = \frac{1}{r} \quad 
 \text{and} \quad \frac{1}{q} - \frac{2 \delta^{\prime}}{n} = \frac{1}{\kappa}.
\end{align}
\end{lemma}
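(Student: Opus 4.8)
The plan is to establish both embeddings in \eqref{Eq: Fractional powers} by the same mechanism: combine the known optimal square-root characterizations \eqref{Eq: Square roots} with the boundedness of the $\H^\infty$-calculus (which gives that fractional-power domains coincide with complex interpolation spaces), and then feed the resulting interpolation identities into a Sobolev embedding between first-order Sobolev spaces. Concretely, for the Neumann Laplacian on $\L^{q^*}(\Omega)$, write $\dom((-\Delta_{q^*})^{\alpha}) = \dom((-\Delta_{q^*})^{1/2}(-\Delta_{q^*})^{\delta})$ and observe that this equals $\dom((-\Delta_{q^*})^{1/2})$ applied to $\dom((-\Delta_{q^*})^{\delta})$; using \eqref{Eq: Square roots} one gets $\dom((-\Delta_{q^*})^{\alpha}) = \{u : u \in \W^{1,q^*}(\Omega), \ \nabla u \in \dom((-\Delta_{q^*})^{\delta})\}$ in a suitable sense, or more cleanly, $\dom((-\Delta_{q^*})^{\alpha}) \hookrightarrow \W^{1,r}(\Omega)$ will follow once we know $\dom((-\Delta_{q^*})^{\delta}) \hookrightarrow \L^{r}(\Omega)$ with the gradient mapping respected. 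The cleanest route is: $(-\Delta_{q^*})^{1/2}$ maps $\dom((-\Delta_{q^*})^{\alpha})$ isomorphically onto $\dom((-\Delta_{q^*})^{\delta})$, and the latter embeds into a higher-integrability Lebesgue space by the scale of $\L^p$-mapping properties of the Neumann Laplacian in Lipschitz domains.

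The key quantitative input is the $\L^p$-to-$\L^{r}$ smoothing of fractional powers: for the Neumann Laplacian $-\Delta_{s}$ acting on $\L^s(\Omega)$ with $s$ in the admissible range \eqref{Eq: Lipschitz condition on q}, one has $\dom((-\Delta_{s})^{\delta}) \hookrightarrow \L^{r}(\Omega)$ whenever $\tfrac1s - \tfrac{2\delta}{n} = \tfrac1r$ and both $s$ and $r$ stay inside the allowed interval around $\tfrac12$. This is where the hypothesis $0 < \delta < \tfrac{n\eps}{4}$ enters: starting from $1/q^*$ and decreasing by $2\delta/n$ keeps $1/r$ within $\eps/2$ of $1/q$, hence within $\eps$ of $(n+1)/(2n)$, so $r$ remains admissible and \eqref{Eq: Square roots}-type results as well as the $\H^\infty$-calculus remain available at exponent $r$. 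One must check that $\tfrac1{q^*} = \tfrac1q - \tfrac1n$ together with \eqref{Eq: Condition on Lebesgue exponent} places $1/q^*$ close enough to $\tfrac{n+1}{2n}$ that subtracting up to $\tfrac{2\delta}{n} < \tfrac{\eps}{2}$ does not exit the window; this is a short arithmetic verification using $\eps \leq 2/n$ from \eqref{Eq: Smallness epsilon}. For the Stokes operator the argument is identical: $\dom(A^{\alpha'}) = \dom(A^{1/2}A^{\delta'})$, use $\dom(A^{1/2}) = \W^{1,q}_{0,\sigma}(\Omega)$ from \eqref{Eq: Square roots}, and invoke the $\L^p$-smoothing of $A^{\delta'}$ on $\L^q_\sigma(\Omega)$ (available by the bounded $\H^\infty$-calculus of $A$ and interpolation) to land in $\W^{1,\kappa}_{0,\sigma}(\Omega)$ with $\tfrac1\kappa = \tfrac1q - \tfrac{2\delta'}{n}$. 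Here the weaker restriction $0 < \delta' \leq \tfrac12$ suffices because $q$ itself (not $q^*$) is already well inside the admissible interval, so the Stokes-side exponent budget is larger.

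The main obstacle is making rigorous the step ``$\dom((-\Delta_{s})^{1/2}(-\Delta_{s})^{\delta})$ embeds into $\W^{1,r}$'': one needs that the isomorphism $(-\Delta_{s})^{1/2} : \W^{1,s}(\Omega) \cap \text{(appropriate space)} \to \L^s(\Omega)$ interacts correctly with the further fractional power, i.e., that $\nabla$ and $(-\Delta_s)^{\delta}$ commute in the relevant sense and that $\dom((-\Delta_s)^{\alpha})$ is genuinely $\{u \in \W^{1,s} : (-\Delta_s)^{\delta}u \text{ controls } \nabla u \text{ in } \L^r\}$. The clean way to bypass this is to argue purely on the level of interpolation spaces: since all operators involved have bounded $\H^\infty$-calculus, $\dom((-\Delta_{q^*})^{\alpha}) = [\L^{q^*}(\Omega), \dom(-\Delta_{q^*})]_{\alpha}$, and by reiteration $= [\dom((-\Delta_{q^*})^{1/2}), \dom((-\Delta_{q^*})^{1/2+1/2})]_{2\delta} = [\W^{1,q^*}(\Omega), \dom((-\Delta_{q^*})^{1})]_{\text{shifted}}$; then apply the known Sobolev embedding $\dom(-\Delta_{q^*}) \hookrightarrow \W^{?,q^*}$ (the fractional-Sobolev regularity from \cite[Thm.~9.2]{Fabes_Mendez_Mitrea_1998}) and interpolate against $\W^{1,q^*}$, using Sobolev embedding to trade smoothness for integrability and arriving at $\W^{1,r}$. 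I expect the bookkeeping of which fractional Sobolev exponent one lands in — and verifying it dominates $\W^{1,r}$ for the target $r$ in \eqref{Eq: Definition of kappa} — to be the only genuinely delicate point; everything else is assembling cited results.
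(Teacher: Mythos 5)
Your main route is essentially the paper's: decompose $(\Id-\Delta_{q^*})^{-\alpha} = (\Id-\Delta_{q^*})^{-1/2}(\Id-\Delta_{q^*})^{-\delta}$, identify $\dom((-\Delta_{q^*})^\delta)$ with $\H^{2\delta,q^*}(\Omega)$ via the square root characterization~\eqref{Eq: Square roots} and complex interpolation, and Sobolev-embed into $\L^r(\Omega)$. But the obstacle you flag --- that $\nabla$ and $(-\Delta_s)^\delta$ might not commute in a usable way --- is not actually in the way, and the fallback route through the fractional Sobolev regularity of \cite[Thm.~9.2]{Fabes_Mendez_Mitrea_1998} is unnecessary. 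The clean finish is to change the base exponent at the square-root step: once $h := (\Id-\Delta_{q^*})^{-\delta}g$ is known to lie in $\L^r(\Omega)$ for $g \in \L^{q^*}(\Omega)$, consistency of the Neumann Laplacian across the $\L^p$-scale lets one regard $(\Id-\Delta_{q^*})^{-1/2}h$ as $(\Id-\Delta_r)^{-1/2}h$, and then $\dom((-\Delta_r)^{1/2}) = \W^{1,r}(\Omega)$ from~\eqref{Eq: Square roots} directly yields
\[
\|(\Id-\Delta_{q^*})^{-\alpha}g\|_{\W^{1,r}(\Omega)} \leq C\|h\|_{\L^r(\Omega)} \leq C\|h\|_{\H^{2\delta,q^*}(\Omega)} \leq C\|g\|_{\L^{q^*}(\Omega)}.
\]
No commutation of $\nabla$ with fractional powers is needed; the only thing to verify is that $r$ remains in the admissible window~\eqref{Eq: Lipschitz condition on q}, so that the square root property is available at $r$. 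Your arithmetic on this point is essentially right, up to one slip: it is $1/q$, not $1/q^*$, that lies within $\eps/2$ of $(n+1)/(2n)$; the exponent $1/q^* = 1/q - 1/n$ sits near $(n-1)/(2n)$, already close to the lower edge of the window, and the constraint $\delta < n\eps/4$ is exactly what ensures that the further decrease by $2\delta/n < \eps/2$ keeps $1/r$ inside (using~\eqref{Eq: Smallness epsilon}). Your explanation of why $\delta' \leq 1/2$ suffices on the Stokes side --- starting from $1/q$ rather than $1/q^*$ leaves a larger exponent budget --- is correct, and the argument there is identical.
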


\begin{proof}
Using~\eqref{Eq: Smallness epsilon}, we remark that for the choices of $\delta$ and $\delta^{\prime}$ we have
\[
    \Big\lvert \frac{1}{\rho} - \frac{1}{2} \Big\rvert < \frac{1}{2 n} + \eps
\]
for the four choices of $\rho$
\[
    \frac{1}{\rho} = \frac{1}{q} - \frac{1}{n} \quad \text{or} \quad 
    \frac{1}{\rho} = \frac{1}{q} - \frac{1}{n} - \frac{2 \delta}{n} \quad \text{or} \quad 
    \frac{1}{\rho} = \frac{1}{q} \quad \text{or} \quad 
    \frac{1}{\rho} = \frac{1}{q} - \frac{2 \delta^{\prime}}{n}.
\]
Writing
\begin{align}
\label{Eq: Splitting beta}
 (\Id - \Delta_{q^*})^{- \alpha} 
 = (\Id - \Delta_{q^*})^{- 1 / 2} (\Id - \Delta_{q^*})^{- \delta}
\end{align}
and noticing that for $g \in \L^{q^*} (\Omega)$ the square root property~\eqref{Eq: Square roots} and a Sobolev embedding implies that
\[
 \| (\Id - \Delta_{q^*})^{- \alpha} g \|_{\W^{1 , r} (\Omega)} \leq C \| (\Id - \Delta_{q^*})^{- \delta} g \|_{\L^r (\Omega)} 
 \leq C \| (\Id - \Delta_{q^*})^{- \delta} g \|_{\H^{2 \delta , q^*} (\Omega)}.
\]
The last term can be estimated by noting that $\dom((- \Delta_{q^*})^{\delta}) = \H^{2 \delta , q^*} (\Omega)$, which follows by~\eqref{Eq: Square roots} and complex interpolation. Thus
\begin{align*}
 \| (\Id - \Delta_{q^*})^{- \alpha} g \|_{\W^{1 , r} (\Omega)} \leq C \| (\Id - \Delta_{q^*})^{- \delta} g \|_{\H^{2 \delta , q^*} (\Omega)} 
 \leq C \| g \|_{\L^{q^*} (\Omega)}.
\end{align*}
We proceed analogously for the Stokes operator, but omit the details.
\end{proof}

The above optimal embeddings in the first-order Sobolev scale will be combined with the 
maximal $\L^p_{\mu}$-regularity property of $\cA$, which is provided by the following 
proposition.

\begin{proposition}
\label{prop:Amaxreg}
Let $1<p<\infty$, $\frac{1}{p}<\mu \leq 1$ and $q \in (1 , \infty)$ be subject to~\eqref{Eq: Condition on Lebesgue exponent}. Then the operator $\cA$ defined in 
\eqref{Eq: Operator matrix} on $X_0$ with domain $X_1$ has maximal $\L^p_{\mu}$-regularity on 
$(0 , T)$ for any $0 < T < \infty$. Moreover, on $\cX_0$ with domain $\cX_1$ it has maximal 
$\L^p_\mu$-regularity on $(0,\infty)$.
\end{proposition}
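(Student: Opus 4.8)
The plan is to exploit the lower-triangular structure of $\cA$ and reduce the statement to the maximal regularity of its three diagonal entries. I would first establish maximal $\L^p$-regularity of $\cA$ (the case $\mu = 1$) and then pass to general $\mu$ by \cite[Thm.~3.5.4]{PS2016}: on the bounded interval $(0,T)$ this equivalence is immediate, and on $\cX_0$ over $(0,\infty)$ it holds because the diagonal entries, hence $\cA$ itself, will be invertible there. For the diagonal entries maximal $\L^p$-regularity is known: the shifted Neumann Laplacian $-\Delta_{q^*} + \Id$ on $\L^{q^*}(\Omega)$ admits a bounded $\H^{\infty}$-calculus by \cite[Thm.~1.3]{Egert} — here $q^*$ is again an admissible exponent, cf.\ the start of the proof of Lemma~\ref{Lem: Embeddings fractional power domains} — and the Stokes operator $A$ on $\L^q_\sigma(\Omega)$ admits a bounded $\H^{\infty}$-calculus by \cite[Thm.~16]{Kunstmann_Weis}; both have spectrum bounded away from $0$, for the former because of the zeroth order term and for the latter because $\Omega$ is bounded, so each has maximal $\L^p$-regularity on $(0,\infty)$. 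For the top-left entry, the Neumann Laplacian on $\W^{-1,q}_0(\Omega)$, respectively on $\W^{-1,q}_{\mathrm{av}}(\Omega)$, I would transfer the bounded $\H^{\infty}$-calculus from $\L^q(\Omega)$: by \eqref{Eq: Square roots} the operator $(\Id - \Delta_q)^{1/2}$ identifies $\W^{1,q}(\Omega)$ with $\L^q(\Omega)$ and thus moves the calculus to $\W^{1,q}(\Omega)$, and a duality argument then carries it to the negative-order space. On $\W^{-1,q}_{\mathrm{av}}(\Omega)$ this realization is invertible, so it has maximal $\L^p$-regularity on $(0,\infty)$; on $\W^{-1,q}_0(\Omega)$ the constants remain in the kernel, so only maximal $\L^p$-regularity on bounded intervals is available — which is precisely the dichotomy in the statement.

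With the diagonal entries in hand I would solve $U' + \cA U = (g_1,g_2,g_3)$, $U(0) = U_0$, by forward substitution. First solve $u' - \Delta u = g_1$, $u(0) = u_0$ and obtain $u \in \IE_{p,\mu}^T(\W^{-1,q}_0(\Omega), \W^{1,q}(\Omega))$, in particular $u \in \L^p_\mu(0,T;\W^{1,q}(\Omega))$. Since $q < n$ and $1/q - 1/n = 1/q^*$, the Sobolev embedding $\W^{1,q}(\Omega) \hookrightarrow \L^{q^*}(\Omega)$ yields $u \in \L^p_\mu(0,T;\L^{q^*}(\Omega))$, so $g_2 + u$ is an admissible source for the second line and produces $v \in \IE_{p,\mu}^T(\L^{q^*}(\Omega), \dom(\Delta_{q^*}))$. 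For the third line, H\"older's inequality with $1/q = 1/n + 1/q^*$ gives $\|f u(t)\|_{\L^q(\Omega)} \leq \|f\|_{\L^n(\Omega)} \|u(t)\|_{\L^{q^*}(\Omega)}$, so $\IP[f u] \in \L^p_\mu(0,T;\L^q_\sigma(\Omega))$ by boundedness of $\IP$ on $\L^q(\Omega;\IR^n)$; hence $g_3 + \IP[f u]$ is admissible and $w \in \IE_{p,\mu}^T(\L^q_\sigma(\Omega), \dom(A))$. Then $U = (u,v,w) \in \IE_{p,\mu}^T(X_0,X_1)$ solves the problem; uniqueness follows by running the same three steps on the difference of two solutions, and the estimate \eqref{Eq: Maximal regularity estimate} from the closed graph theorem. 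On $\cX_0$ over $(0,\infty)$ the argument is identical, the only change being that the first line is now solved on $\W^{-1,q}_{\mathrm{av}}(\Omega)$ with domain $\W^{1,q}(\Omega) \cap \L^q_{\mathrm{av}}(\Omega)$, where maximal regularity is available on the half-line; the Sobolev and H\"older steps are pointwise in $t$ and so are insensitive to the length of the time interval.

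The routine parts are the forward substitution and the two embedding checks — these are precisely what dictated the choice of $q^*$ and of the scale $X_0$. The delicate point I expect to spend the most care on is the maximal $\L^p$-regularity of the Neumann Laplacian on the negative-order spaces $\W^{-1,q}_0(\Omega)$ and $\W^{-1,q}_{\mathrm{av}}(\Omega)$: no description of its domain is available on a general Lipschitz domain, so one must argue through the square-root identification \eqref{Eq: Square roots} and duality, and one has to track invertibility carefully, since it is exactly whether the constants lie in the kernel that separates the bounded-interval statement on $X_0$ from the half-line statement on $\cX_0$.
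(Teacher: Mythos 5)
Your proposal follows essentially the same route as the paper: reduce to $\mu=1$ via \cite[Thm.~3.5.4]{PS2016}, establish maximal $\L^p$-regularity of the three diagonal entries (with the subtlety confined to the Neumann Laplacian on the negative-order space, handled via the square-root property \eqref{Eq: Square roots} and duality, with invertibility on $\W^{-1,q}_{\mathrm{av}}(\Omega)$ accounting for the half-line statement), and then forward substitution through the lower-triangular structure using $\W^{1,q}(\Omega)\hookrightarrow\L^{q^*}(\Omega)$ and H\"older with $f\in\L^n$. The only genuine distinction is that the paper transfers maximal $\L^p$-regularity directly (invertibility plus heat-kernel bounds on $\L^{q'}_{\mathrm{av}}$ via \cite[Thm.~3.1]{Hieber_Pruess}, then commutation with $(-\Delta_{q'})^{1/2}$), whereas you transfer the bounded $\H^\infty$-calculus; both work, since the underlying spaces are UMD.

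One bookkeeping slip worth flagging: you move the calculus from $\L^q(\Omega)$ up to $\W^{1,q}(\Omega)$ via $(\Id-\Delta_q)^{1/2}$ and then say a duality argument carries it to the negative-order space. But dualizing $\W^{1,q}(\Omega)$ produces $\W^{-1,q'}(\Omega)$, not the target space $\W^{-1,q}_0(\Omega)$ appearing in $X_0$. The correct chain, and what the paper does, is to start on $\L^{q'}_{\mathrm{av}}(\Omega)$, transfer to $\W^{1,q'}(\Omega)\cap\L^{q'}_{\mathrm{av}}(\Omega)$ via the square-root property for $\Delta_{q'}$, and only then dualize to land on $\W^{-1,q}_{\mathrm{av}}(\Omega)$ (and analogously with $\W^{1,q'}(\Omega)$ dualizing to $\W^{-1,q}_0(\Omega)$). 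Your conclusion is right, but the exponent in the intermediate space needs to be $q'$, not $q$.
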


\begin{proof}
We only consider the global maximal regularity property for $\cA$ on $\cX_0$ with domain $\cX_1$. 
The other case follows a similar reasoning. We remark, that it is classical, that one only 
needs to consider that case of homogeneous initial conditions. Moreover, 
by~\cite[Thm.~3.5.4]{PS2016}, it suffices to prove that $(\cA,\cX_1)$ has maximal 
$\L^p$-regularity, i.e., for $\mu=1$. 

We first give a short argument that shows the maximal $\L^p$-regularity for $-\Delta$ 
on $\W^{-1,q}_{\mathrm{av}}(\Omega)$ relying 
on Amann's inter- and extrapolation scales~\cite[Sec.~6]{amann88}: we regard 
$\Delta_{q^{\prime}}$ as an operator on $\L^{q^{\prime}}_{\mathrm{av}}(\Omega)$, 
so that, by invertibility of $\Delta_{q^{\prime}}$ and heat kernel bounds of the 
corresponding semigroup, it has maximal $\L^p$-regularity on $(0,\infty)$, see~\cite[Thm.~3.1]{Hieber_Pruess}. Moreover, 
since $(- \Delta_{q^{\prime}})^{1/2}$ commutes with the time derivative and with 
$\Delta_{q^{\prime}}$, we conclude that $-\Delta_{q^{\prime}}$ has maximal 
$\L^p$-regularity on $(0,\infty)$ with respect to the ground space 
$\dom((-\Delta_{q^{\prime}})^{1/2}) = \W^{1,q^{\prime}}(\Omega)
\cap\L^{q^{\prime}}_{\mathrm{av}}(\Omega)$. By duality, we infer that $-\Delta$ has 
maximal $\L^p$-regularity on $(0,\infty)$ with respect to the ground space 
$\W^{-1,q}_{\mathrm{av}}(\Omega)$. 

Let $g=(g_1,g_2,g_3)\in \L^p(0,\infty;\cX_0)$. Maximal $\L^p$-regularity for $-\Delta$ on 
$\W^{-1,q}_{\mathrm{av}}(\Omega)$ 
ensures that the problem 
\[
\partial_tu-\Delta u=g_1 \text{ in }(0,\infty)\times\Omega, 
\quad \partial_\nu u=0 \text{ on }(0,\infty)\times\partial\Omega, \quad
u(0,\cdot)=0 \text{ in }\Omega
\]
has a unique solution 
$u\in \W^{1,p}\bigl(0,\infty;\W^{- 1 , q}_{\mathrm{av}} (\Omega)\bigr)
\cap\L^p\bigl(0,\infty;\W^{1,q}(\Omega)\bigr)$. 
This gives that $u\in \L^p\bigl(0,\infty;\L^{q^*}(\Omega)\bigr)$. 
Next, since $\Id - \Delta_{q^*}$ satisfies heat kernel bounds it has maximal 
$\L^p$-regularity on $\L^{q^*}(\Omega)$, see~\cite[Thm.~3.1]{Hieber_Pruess}, and provides a 
solution $v\in \W^{1,p}\bigl(0,\infty;\L^{q^*}(\Omega)\bigr)
\cap \L^p\bigl(0,\infty;\dom(\Delta_{q^*})\bigr)$ of 
\[
\partial_tv+(\Id-\Delta_{q^*}) v =g_2 + u \text{ in }(0,\infty)\times\Omega, 
\quad \partial_\nu v=0 \text{ on }(0,\infty)\times\partial\Omega, \quad
v(0,\cdot)=0 \text{ in }\Omega.
\]
Finally, since by assumption $f\in \L^n(\Omega;{\mathbb{R}}^n)$, we have
that $\IP(uf)\in \L^p\bigl(0,\infty;\L^q_\sigma(\Omega)\bigr)$ and
then maximal $\L^p$-regularity satisfied by the Stokes operator 
$A$ in $\L^q_\sigma(\Omega)$,~\cite[Thm.~16]{Kunstmann_Weis}, ensures that the problem
\[
\partial_t w + A w=g_3 +\IP(uf), \quad w(0,\cdot)=0 \text{ in }\Omega
\]
has a unique solution 
$w\in \W^{1,p}\bigl(0,\infty;\L^q_\sigma(\Omega)\bigr)\cap
\L^p\bigl(0,\infty;\dom(A)\bigr)$.
\end{proof}

\section{Proof of Local solvability}
\label{Sec: Proof of Local solvability}
\noindent In this section we show that system~\eqref{Eq: abstract system} admits a unique 
local strong solution in $\IE_{p , \mu}^T (X_0 , X_1)$. We are going to apply the well-known contraction principle, 
see Lemma~\ref{Lem: fixed point}~\eqref{i} below, stated as in~\cite[Thm.~22.4]{Lem02}, see also~\cite[Lem.~4, Sec.~3.1]{Can05}, and its straightforward variant~\eqref{ii}.

\begin{lemma}[Fixed point theorem]
\label{Lem: fixed point}
Let $E$ be a Banach space and $\Psi:E\times E\to E$ a bounded
bilinear map with norm $C>0$.
\begin{enumerate}
 \item \label{i} For all $0 < \kappa < \frac{1}{4 C}$ and for all $a\in E$ with 
$\|a\|_E \leq \kappa$, there exists a unique $u\in E$ with
\begin{align*}
 \|u\|_E < 2 \kappa \quad \text{such that} \quad u=a+\Psi(u,u).
\end{align*}
    \item \label{ii}
Let $E_0\subset E$ be a Banach space and assume that $\Psi$ takes its values in $E_0$. 
Then for
all $a\in E$ with $\|a\|_{E} \leq \frac{1}{4 C}$, there exists a unique
$v\in E_0$ with 
\begin{align*}
 \|v\|_{E_0} < \frac{1}{4 C} \quad \text{satisfying} \quad v=\Psi(a,a)+\Psi(a,v)+\Psi(v,a)+\Psi(v,v).
\end{align*}
\end{enumerate}
\end{lemma}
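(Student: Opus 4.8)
The plan is to prove both parts by the Banach fixed point theorem applied to a self-map of a suitable closed ball, using nothing beyond the bilinearity of $\Psi$ and the bound $\|\Psi(x,y)\|_E \le C\|x\|_E\|y\|_E$. In each case the two ingredients to check are a self-mapping estimate and a contraction estimate, and both will be forced by the arithmetic inequality $4C\kappa < 1$ (resp.\ its analogue in part~\eqref{ii}).

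For part~\eqref{i}, I would fix $a$ with $\|a\|_E \le \kappa$, set $r := 2\kappa$, and study $F(u) := a + \Psi(u,u)$ on the complete metric space $\overline{B}_r := \{u \in E : \|u\|_E \le r\}$. The self-mapping property follows from $\|F(u)\|_E \le \|a\|_E + C\|u\|_E^2 \le \kappa + Cr^2 = \kappa(1 + 4C\kappa) < 2\kappa = r$; note that this actually lands in the open ball, which is precisely what yields the strict bound $\|u\|_E < 2\kappa$ for the fixed point. For the contraction estimate I would use $\Psi(u,u) - \Psi(u',u') = \Psi(u,u-u') + \Psi(u-u',u')$ to obtain $\|F(u)-F(u')\|_E \le C(\|u\|_E + \|u'\|_E)\|u-u'\|_E \le 4C\kappa\,\|u-u'\|_E$, a strict contraction since $4C\kappa < 1$. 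Banach's theorem gives a unique fixed point in $\overline{B}_r$; since every $u$ with $\|u\|_E < 2\kappa$ solving $u = a + \Psi(u,u)$ lies in $\overline{B}_r$, uniqueness within $\{\|u\|_E < 2\kappa\}$ is automatic.

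For part~\eqref{ii}, the substitution $u = a + v$ turns $u = a + \Psi(u,u)$, after expanding by bilinearity, into exactly $v = \Psi(a,a) + \Psi(a,v) + \Psi(v,a) + \Psi(v,v) =: G(v)$; since $\Psi$ is valued in $E_0$, each of the four terms and hence $G(v)$ lies in $E_0$. I would then repeat the two estimates for $G$ on the closed ball of radius $\tfrac{1}{4C}$ in $E_0$, now reading the bilinear bound with target $E_0$: this gives $\|G(v)\|_{E_0} \le C(\|a\|_E + \|v\|_{E_0})^2 \le \tfrac{1}{4C}$ and $\|G(v) - G(v')\|_{E_0} \le C\bigl(2\|a\|_E + \|v\|_{E_0} + \|v'\|_{E_0}\bigr)\|v-v'\|_{E_0}$, whose coefficient is strictly less than $1$ as soon as $\|a\|_E < \tfrac1{4C}$ (the same bound also gives uniqueness directly in the open ball $\{\|v\|_{E_0} < \tfrac1{4C}\}$). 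Alternatively, for $\|a\|_E < \tfrac1{4C}$ one can simply read off existence from part~\eqref{i}: take $\kappa := \|a\|_E$, let $u$ be the solution from~\eqref{i}, and set $v := u - a = \Psi(u,u) \in E_0$, which satisfies $\|v\|_{E_0} \le C\|u\|_E^2 < 4C\kappa^2 < \tfrac1{4C}$ and the required identity.

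I do not expect any genuine obstacle; the argument is the textbook contraction scheme for quadratic equations, matching the cited statements~\cite[Thm.~22.4]{Lem02},~\cite[Lem.~4, Sec.~3.1]{Can05}. The two points needing care are the bookkeeping of $E$- versus $E_0$-norms in part~\eqref{ii} — i.e.\ making explicit that $\Psi$ is bounded \emph{into} $E_0$, so that (after possibly enlarging $C$) the same constant controls it there — and the fact that the constants sit exactly on the contraction threshold, which is why the statement is phrased with the strict inequality $4C\kappa < 1$ and with strict a priori bounds on $\|u\|_E$ and $\|v\|_{E_0}$; the borderline case $\|a\|_E = \tfrac1{4C}$ in part~\eqref{ii}, if one insists on it, is dispatched by continuity of the fixed point in the parameter $a$.
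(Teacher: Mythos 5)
The paper does not supply its own proof of this lemma; it simply cites~\cite[Thm.~22.4]{Lem02} and~\cite[Lem.~4, Sec.~3.1]{Can05}. Your argument is the standard Banach contraction proof those references use, and the core of it is correct: the self-mapping estimate $\kappa + 4C\kappa^2 < 2\kappa$ and the contraction constant $4C\kappa < 1$ in part~\eqref{i}, the algebraic identity $u = a + \Psi(u,u) \Leftrightarrow v = \Psi(a,a)+\Psi(a,v)+\Psi(v,a)+\Psi(v,v)$ with $v = u - a$, and the observation that $\Psi$ must be read as bounded \emph{into} $E_0$ (which, as you note, the statement leaves implicit and which in the paper's application holds with constant $C$ because $E_0$ is a closed subspace of $E$ carrying the induced norm).

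There is, however, one concrete error in your closing remark. You claim the borderline case $\|a\|_E = \tfrac{1}{4C}$ in part~\eqref{ii} ``is dispatched by continuity of the fixed point in the parameter $a$.'' It is not: take $E = E_0 = \IR$ and $\Psi(x,y) = Cxy$. For $a = \tfrac{1}{4C}$ the equation $v = \Psi(a,a)+\Psi(a,v)+\Psi(v,a)+\Psi(v,v)$ becomes $Cv^2 - \tfrac{1}{2}v + \tfrac{1}{16C} = 0$, i.e.\ $(4Cv-1)^2 = 0$, whose only solution is $v = \tfrac{1}{4C}$. So \emph{no} solution satisfies the strict bound $\|v\|_{E_0} < \tfrac{1}{4C}$, and no limiting argument can produce one. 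Your contraction argument genuinely requires $\|a\|_E < \tfrac{1}{4C}$, and at $\|a\|_E = \tfrac{1}{4C}$ the map $G$ is merely non-expansive on the critical ball. This is really an imprecision in the lemma as stated rather than in the substance of your proof --- in the paper's applications $\|a\|_E$ is always made strictly small before the lemma is invoked --- but the ``continuity'' dispatch should be deleted, and one should either replace $\|a\|_E \le \tfrac{1}{4C}$ by a strict inequality in part~\eqref{ii}, or weaken the conclusion to $\|v\|_{E_0} \le \tfrac{1}{4C}$ with uniqueness only among $v$ with $\|v\|_{E_0} < \tfrac{1}{4C}$.
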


\begin{remark}
    A solution $v\in E_0$ given by \eqref{ii} provides a solution $u\in E$ of \eqref{i} 
    with $u=v+a$ in Lemma~\ref{Lem: fixed point}.
\end{remark}

In our situation the bilinear map $\Psi$ is given by $(\partial_t+\cA)^{-1}\Phi$ where
\begin{align}
    \label{eq:bilinearmap}
    \Phi (U_1,U_2) := 
    \begin{pmatrix} \Phi_1(U_1,U_2) \\ \Phi_2 (U_1,U_2) \\ \Phi_3(U_1,U_2) 
    \end{pmatrix} := \begin{pmatrix} 
    - w_1 \cdot \nabla u_2 - \divergence (u_1 \nabla v_2) \\
    - w_1 \cdot \nabla v_2 \\
    - \IP (w_1 \cdot \nabla) w_2 \end{pmatrix}\cdotp
\end{align}
In the following two lemmas we are going to prove its boundedness on 
$\IE_{p , \mu}^T (X_0 , X_1)$.

\begin{lemma} 
\label{lem:bilinearmap}
For every $1 < q < n$ subject to~\eqref{Eq: Condition on Lebesgue exponent} there exist $0 < \delta < \frac{n \eps}{4}$ and $0 < \delta^{\prime} < \frac{1}{2}$ satisfying
\begin{align}
\label{Eq: Sum of deltas}
 \delta + \delta^{\prime} = \frac{1}{2} \Big(  \frac{n}{q} - 1 \Big). 
\end{align}
Moreover, if $\beta := \frac{1}{2} + \delta$ and 
$\beta':= \frac12 + \delta'$, the map $\Phi : (X_{\beta} \cap X_{\beta^{\prime}}) \times (X_{\beta} 
\cap X_{\beta^{\prime}})\to X_0$ given as in~\eqref{eq:bilinearmap} satisfies 
the estimate
\begin{align*}
    \| \Phi(U_1,U_2) \|_{X_0} \leq C \big(\|U_1\|_{X_{\beta}} 
    \|U_2 \|_{X_{\beta^{\prime}}} + \|U_1 \|_{X_{\beta^{\prime}}} \|U_2 \|_{X_{\beta}}\big).
\end{align*}
\end{lemma}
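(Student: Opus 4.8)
The plan is to produce the parameters $\delta,\delta'$ first, and then to bound the three components of $\Phi=(\Phi_1,\Phi_2,\Phi_3)$ from~\eqref{eq:bilinearmap} separately in their target spaces $\W^{-1,q}_0(\Omega)$, $\L^{q^*}(\Omega)$, $\L^q_\sigma(\Omega)$. Recall that $X_\beta=\H^{2\delta,q}(\Omega)\times\dom((-\Delta_{q^*})^{\beta})\times\dom(A^{\beta})$ and similarly for $X_{\beta'}$. For the choice of parameters, put $\delta':=\tfrac12(\tfrac nq-1)-\delta$; then the conditions $0<\delta<\tfrac{n\eps}{4}$ and $0<\delta'<\tfrac12$ hold precisely when $\delta$ lies in the interval with endpoints $\max\{0,\tfrac12(\tfrac nq-1)-\tfrac12\}$ and $\min\{\tfrac{n\eps}{4},\tfrac12(\tfrac nq-1)\}$. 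This interval is nonempty: $\tfrac12(\tfrac nq-1)>0$ because $q<n$, while $\tfrac12(\tfrac nq-1)-\tfrac12<\tfrac{n\eps}{4}$ since~\eqref{Eq: Condition on Lebesgue exponent} gives $\tfrac12(\tfrac nq-1)<\tfrac{n-1}{4}+\tfrac{n\eps}{4}$ and $\tfrac{n-1}{4}\le\tfrac12$ for $n\le 3$. Fixing such $\delta,\delta'$, they obey~\eqref{Eq: Sum of deltas}, i.e., $\tfrac{2(\delta+\delta')}{n}=\tfrac1q-\tfrac1n$, the one arithmetic identity making every Hölder exponent below close with equality; also $0<\delta,\delta'<\tfrac12$ and $2\delta,2\delta'<\tfrac nq$.

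Next I would record the embeddings. By Lemma~\ref{Lem: Embeddings fractional power domains}, using its Stokes part with exponent $\delta$ and with $\delta'$ (both $\le\tfrac12$) and its Laplacian part with $\delta<\tfrac{n\eps}{4}$, one has $\dom(A^{\beta})\hookrightarrow\W^{1,\widetilde\kappa}_{0,\sigma}(\Omega)$, $\dom(A^{\beta'})\hookrightarrow\W^{1,\kappa}_{0,\sigma}(\Omega)$ and $\dom((-\Delta_{q^*})^{\beta})\hookrightarrow\W^{1,r}(\Omega)$ with $\tfrac1{\widetilde\kappa}=\tfrac1q-\tfrac{2\delta}{n}$, $\tfrac1\kappa=\tfrac1q-\tfrac{2\delta'}{n}$ and $\tfrac1r=\tfrac1{q^*}-\tfrac{2\delta}{n}$. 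Chaining with the Sobolev embeddings $\W^{1,\widetilde\kappa}(\Omega)\hookrightarrow\L^{\widetilde\kappa^*}(\Omega)$, $\W^{1,\kappa}(\Omega)\hookrightarrow\L^{\kappa^*}(\Omega)$ (valid since $1<\widetilde\kappa,\kappa<n$, which follows from $\delta,\delta'>0$) and $\H^{2\delta',q}(\Omega)\hookrightarrow\L^{a}(\Omega)$, where $\tfrac1{\widetilde\kappa^*}=\tfrac1q-\tfrac1n-\tfrac{2\delta}{n}$, $\tfrac1{\kappa^*}=\tfrac1q-\tfrac1n-\tfrac{2\delta'}{n}$ and $\tfrac1a=\tfrac1q-\tfrac{2\delta'}{n}$, the identity $\tfrac{2(\delta+\delta')}{n}=\tfrac1q-\tfrac1n$ gives the four relations $\tfrac1{\widetilde\kappa^*}+\tfrac1\kappa=\tfrac1q$, $\tfrac1{\kappa^*}+\tfrac1r=\tfrac1{q^*}$, $\tfrac1{\widetilde\kappa^*}+\tfrac1a=\tfrac1q$ and $\tfrac1a+\tfrac1r=\tfrac1q$.

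With these in place the three bounds are plain Hölder estimates. For $\Phi_3=-\IP(w_1\cdot\nabla)w_2$, use boundedness of $\IP$ on $\L^q(\Omega;\IC^n)$ (valid under~\eqref{Eq: Condition on Lebesgue exponent}) and Hölder with $w_1\in\dom(A^{\beta})\hookrightarrow\L^{\widetilde\kappa^*}(\Omega)$ and $\nabla w_2\in\L^\kappa(\Omega)$, getting $\|\Phi_3(U_1,U_2)\|_{\L^q_\sigma}\le C\|U_1\|_{X_\beta}\|U_2\|_{X_{\beta'}}$. For $\Phi_2=-w_1\cdot\nabla v_2$, Hölder with $w_1\in\dom(A^{\beta'})\hookrightarrow\L^{\kappa^*}(\Omega)$ and $\nabla v_2\in\L^r(\Omega)$ gives $\|\Phi_2(U_1,U_2)\|_{\L^{q^*}}\le C\|U_1\|_{X_{\beta'}}\|U_2\|_{X_\beta}$; here $v_2$ must carry the index $\beta$, as only for that index does Lemma~\ref{Lem: Embeddings fractional power domains} supply a first-order Sobolev embedding of $\dom((-\Delta_{q^*})^{\,\cdot})$. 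For $\Phi_1$, read its first term as $\divergence(w_1u_2)$ — which equals $w_1\cdot\nabla u_2$ in $\W^{-1,q}_0(\Omega)$ since $w_1$ is weakly divergence-free with vanishing normal trace — use that $\divergence\colon\L^q(\Omega;\IC^n)\to\W^{-1,q}_0(\Omega)$ is bounded, and estimate $\|w_1u_2\|_{\L^q}\le\|w_1\|_{\L^{\widetilde\kappa^*}}\|u_2\|_{\L^a}$ and $\|u_1\nabla v_2\|_{\L^q}\le\|u_1\|_{\L^a}\|\nabla v_2\|_{\L^r}$, so that $\|\Phi_1(U_1,U_2)\|_{\W^{-1,q}_0}\le C(\|U_1\|_{X_\beta}\|U_2\|_{X_{\beta'}}+\|U_1\|_{X_{\beta'}}\|U_2\|_{X_\beta})$. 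Summing the three estimates and using the product structure of $X_0$, $X_\beta$, $X_{\beta'}$ gives the claim.

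The delicate points I expect are two. The nonemptiness of the admissible interval for $(\delta,\delta')$ is exactly where the dimension bound $n\le3$ enters (through $\tfrac{n-1}{4}\le\tfrac12$), so this is where the scheme would fail in higher dimensions. The second is the exponent bookkeeping: the index $\beta$ carries the stricter requirement $\delta<\tfrac{n\eps}{4}$ forced by the first-order embedding of $\dom((-\Delta_{q^*})^{\beta})$, and it must therefore be assigned to $\nabla v$ in every term where $v$ is differentiated, whereas $u$ and $\nabla w$ may take either index; since all Hölder estimates are critical — the reciprocal exponents add up exactly — there is no slack, not even from boundedness of $\Omega$, to absorb a wrong allocation.
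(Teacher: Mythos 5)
Your proof is correct and follows essentially the same route as the paper's: the same existence argument for $\delta,\delta'$ (with $n\le 3$ entering via $\tfrac{n-1}{4}\le\tfrac12$), the same rewriting of $-w_1\cdot\nabla u_2$ as $-\divergence(w_1u_2)$ using $\divergence w_1=0$, the same first-order embeddings from Lemma~\ref{Lem: Embeddings fractional power domains}, and the same critically balanced H\"older exponents, merely relabelled (your $\tilde\kappa^*$ is the paper's $r$ and your $a=\kappa$ is the paper's $s=\kappa'$). Your closing observation that the index $\beta$ must always be assigned to $\nabla v$ — because the Laplacian part of Lemma~\ref{Lem: Embeddings fractional power domains} only covers exponents with $\delta<\tfrac{n\eps}{4}$, which $\delta'$ need not satisfy — correctly articulates a constraint the paper applies but leaves implicit.
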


\begin{proof}
First of all, we establish the existence of $\delta$ and $\delta^{\prime}$. We define the continuous and decreasing function
\begin{align*}
 \delta^{\prime} : [0 , \tfrac{n \eps}{4}] \to \IR, \quad \delta^{\prime} (\delta) := \frac{1}{2} \Big( \frac{n}{q} - 1 \Big) - \delta.
\end{align*}
Note that $\delta^{\prime} (0) > 0$ since $q < n$. At the endpoint $\delta = \tfrac{n \eps}{4}$, we use~\eqref{Eq: Condition on Lebesgue exponent} to estimate
\begin{align*}
 \delta^{\prime} (\tfrac{n \eps}{4}) = \frac{1}{2} \Big( \frac{n}{q} - 1 \Big) - \frac{n \eps}{4} < \frac{1}{2} \Big( \frac{n + 1}{2} + \frac{n \eps}{2} - 1 \Big) - \frac{n \eps}{4} = \frac{n - 1}{4} \leq \frac{1}{2} \quad \text{if} \quad n \leq 3.
\end{align*}
Thus, there exists $0 < \delta < \frac{n \eps}{4}$ such that $0 < \delta^{\prime} (\delta) < \frac{1}{2}$. We then define $\delta^{\prime} := \delta^{\prime} (\delta)$. \par
We start by estimating $\Phi_1 (U_1,U_2)$ in $\W^{-1,q}_0(\Omega)$,  
where $U_1 , U_2 \in X_{\beta} \cap X_{\beta^{\prime}}$. To this end, write
\begin{align*}
    \Phi_1 (U_1,U_2) = - w_1\cdot \nabla u_2 - \divergence(u_1\nabla v_2)
    = -\divergence(w_1 u_2)-\divergence(u_1\nabla v_2)
\end{align*}
since $w_1$ is divergence-free.
Consequently, the $\W^{-1 , q}_0 (\Omega)$-norm of $\Phi_1 (U_1,U_2)$ is estimated by
\begin{align*}
    \| \Phi_1 (U_1,U_2) \|_{\W^{-1 , q}_0 (\Omega)} &\leq \|w_1 u_2\|_{\L^q (\Omega)} 
    + \| u_1 \nabla v_2\|_{\L^q (\Omega)} \\
    &\leq \| w_1\|_{\L^r (\Omega)} \| u_2 \|_{\L^s (\Omega)} 
    + \| u_1 \|_{\L^s (\Omega)} \| \nabla v_2\|_{\L^r (\Omega)},
\end{align*}
where $\frac{1}{q} = \frac{1}{r} + \frac{1}{s}$. Notice that 
\[
    u_1 , u_2 \in \H^{-1 + 2 \beta^{\prime} , q} (\Omega) = 
    \H^{2 \delta^{\prime} , q} (\Omega) \hookrightarrow \L^s (\Omega) \quad 
    \text{whenever} \quad \frac{1}{q} - \frac{2 \delta^{\prime}}{n} = \frac{1}{s}.
\]
Moreover, by virtue of~\eqref{Eq: Fractional powers}, we obtain 
\[
    v_1 , v_2 \in \dom((- \Delta_{q^*})^{\beta}) \hookrightarrow 
    \W^{1 , r} (\Omega) \quad \text{whenever} \quad 
    \frac{1}{q^*} - \frac{2 \delta}{n} = \frac{1}{r}.
\]
Finally, employing~\eqref{Eq: Fractional powers} delivers
\begin{align}
\label{Eq: Fractional embedding Stokes}
    w_1 , w_2 \in \dom(A_q^{\beta}) \hookrightarrow \W^{1 , \kappa}_{0 , \sigma} (\Omega) 
    \hookrightarrow \L^r (\Omega) \quad \text{whenever} \quad 
    \frac{1}{\kappa} - \frac{1}{n} = \frac{1}{r},
\end{align}
where $\kappa$ is given in~\eqref{Eq: Definition of kappa}. Putting all the 
conditions on the parameters together reduces to
$\delta + \delta^{\prime} = \frac{1}{2} ( \frac{n}{q} - 1 )$. 
This gives
\[
    \| \Phi_1 (U_1,U_2) \|_{\W^{-1 , q}_0 (\Omega)} \leq 
    C \big( \| U_1\|_{X_{\beta}} \| U_2 \|_{X_{\beta^{\prime}}} + 
    \| U_2 \|_{X_{\beta}} \| U_1\|_{X_{\beta^{\prime}}} \big).
\]
Concerning the estimate of $\Phi_2 (U_1,U_2)$, write
\[
    \Phi_2 (U_1,U_2) = - w_1 \cdot \nabla v_2.
\]
H\"older's inequality delivers
\[
    \| \Phi_2 (U_1,U_2) \|_{\L^{q^*} (\Omega)} \leq 
    \| w_1 \|_{\L^{\tau} (\Omega)} \| \nabla v_2 \|_{\L^r (\Omega)} 
\]
with $r$ is given as above and $\tau$ satisfies  
$\frac{1}{q^*} = \frac{1}{\tau} + \frac{1}{r}$. 
Using that 
\[    
w_1 \in \dom(A_q^{\beta^{\prime}}) \hookrightarrow 
\W^{1 , \kappa^{\prime}}_{0 , \sigma} (\Omega) \hookrightarrow 
\L^{\tilde{\tau}} (\Omega) \quad \text{whenever} \quad 
\frac{1}{\tilde{\tau}} = \frac{1}{\kappa^{\prime}} - \frac{1}{n} 
= \frac{1}{q} - \frac{1 + 2 \delta^{\prime}}{n}
\]
and noticing that the conditions above imply $\tau = \tilde{\tau}$ yields  
\[
    \| \Phi_2 (U_1,U_2) \|_{\L^{q^*} (\Omega)} 
    \leq C \|U_2 \|_{X_{\beta}} \| U_1 \|_{X_{\beta^{\prime}}}.
\]
Next, we estimate $\Phi_3(U_1,U_2)$. Here we obtain 
\[
    \| - \IP (w_1 \cdot \nabla) w_2 \|_{\L^q (\Omega)} 
    \leq C\| w_1\|_{\L^r (\Omega)} \| \nabla w_2 \|_{\L^s (\Omega)}.
\]
Observe that $s = \kappa^{\prime}$ so that if $w_2 \in \dom(A_q^{\beta^{\prime}})$, 
then
\[
    \| \nabla w_2 \|_{\L^s} \leq C \| U_2 \|_{X_{\beta^{\prime}}}
\]
and therefore
\[
\|\Phi_3(U_1,U_2)\|_{L^q}\leq \|U_1 \|_{X_{\beta}} \| U_2\|_{X_{\beta^{\prime}}}. \qedhere
\]
\end{proof}

\begin{lemma}
\label{lem:bilinearestimates}
Let $0 < T \leq \infty$, $p \in (1 , \infty)$, $1 < q < n$ be subject to~\eqref{Eq: Condition on Lebesgue exponent} and $\mu \in (1 / p , 1]$ satisfy
\begin{equation}
\label{eq:q,p,mu}
 \frac{n}{2q}-\frac{1}{2}+\frac{1}{p} \leq \mu \leq 1.
\end{equation}
Then there exists $C > 0$, independent of $T$, 
such that for all $U_1 , U_2 \in \IE_{p , \mu}^T (X_0 , X_1)$ with $U_1|_{t = 0} = 0 = U_2|_{t = 0}$, we have
\begin{equation}
\label{eq:space-time estimate Phi}
    \| \Phi(U_1,U_2) \|_{\L^p (0 , T ; X_0)} 
    \leq C \| U_1 \|_{\IE_{p , \mu}^T (X_0 , X_1)} 
    \| U_2 \|_{\IE_{p , \mu}^T (X_0 , X_1)} .
\end{equation}   
\end{lemma}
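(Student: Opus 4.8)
The strategy is to combine the pointwise bilinear estimate from Lemma~\ref{lem:bilinearmap} with a mixed-norm Hölder argument in time, using the embedding $\IE_{p,\mu}^T(X_0,X_1) \hookrightarrow \L^\rho(0,T;X_\gamma)$ for appropriate $\rho$ and $\gamma$. First I would recall the standard trace/mixed-derivative embedding for time-weighted maximal regularity spaces (as in Prüss–Simonett–Wilke \cite{PSW18}): if $U \in \IE_{p,\mu}^T(X_0,X_1)$ with $U|_{t=0}=0$, then for $\theta \in [0,1]$ and $s \in (1/(\mu p) , \infty]$ one has
\[
 t^{1-\mu} U \in \L^s\bigl(0,T;X_{1-\theta}\bigr) \quad \text{whenever} \quad \frac{1}{s} = \frac{1}{p} - \theta\Bigl(1 - \frac{1}{p}\Bigr) + \bigl(\text{weight correction}\bigr),
\]
or more precisely the sharp statement $\IE_{p,\mu}^T(X_0,X_1) \hookrightarrow \L^{p/((1-\theta)(\mu p) )}$-type scaling; the clean way is: for $\beta \in (0,1)$, $U \in \L^{r_\beta}(0,T;X_\beta)$ where $\frac{1}{r_\beta} = \frac{1-\mu}{1} + \frac{1}{p} - (1-\beta)$ interpreted with the weight, and the key identity is that the scaling exponent $\mu = \frac{n}{2q} + \frac{1}{p} - \frac12$ makes the relevant index match. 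So the concrete plan: extract from $U_i$ that $t^{1-\mu}U_i \in \L^{r}(0,T;X_\beta) \cap \L^{r'}(0,T;X_{\beta'})$ where $r,r'$ are the conjugate-type exponents determined by $\beta,\beta'$, chosen so that $\frac{1}{r} + \frac{1}{r'} = \frac{1}{p} + 2(1-\mu) \cdot(\text{something})$ — the point being the two time-integrability indices are Hölder-dual into $\L^p$ after accounting for the $t^{2(1-\mu)}$ weight coming from two factors each carrying $t^{1-\mu}$.

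In detail, I would apply Lemma~\ref{lem:bilinearmap} pointwise in $t$ to get
\[
 \|\Phi(U_1(t),U_2(t))\|_{X_0} \leq C\bigl( \|U_1(t)\|_{X_\beta}\|U_2(t)\|_{X_{\beta'}} + \|U_1(t)\|_{X_{\beta'}}\|U_2(t)\|_{X_\beta}\bigr),
\]
then write $\|U_i(t)\|_{X_\beta} = t^{-(1-\mu)}\|t^{1-\mu}U_i(t)\|_{X_\beta}$ and similarly for $\beta'$, so that each product on the right carries a factor $t^{-2(1-\mu)}$. Raising to the $p$-th power, integrating over $(0,T)$, and applying Hölder in time with exponents adapted to the embeddings, the integrability of $t \mapsto \|t^{1-\mu}U_i(t)\|_{X_\beta}$ (which lies in $\L^{r}$) against $\|t^{1-\mu}U_j(t)\|_{X_{\beta'}} \in \L^{r'}$ combined with the integrability of the remaining power $t^{-2(1-\mu)p}$ near $0$ must be checked; condition~\eqref{eq:q,p,mu}, i.e. $\mu \geq \frac{n}{2q} - \frac12 + \frac1p$, is exactly what guarantees this weight is summable and that the Hölder exponents close. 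The $T$-independence of $C$ follows because all the embedding constants for $\IE_{p,\mu}^T \hookrightarrow \L^{r}(0,T;X_\beta)$ are $T$-independent when the scaling index is subcritical-or-critical (a standard fact, using that $U_i$ vanishes at $t=0$), and because the weight $t^{-2(1-\mu)p}$ is integrable on $(0,\infty)$-scale-invariantly.

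The main obstacle I expect is bookkeeping the exact interplay of the three indices — the Sobolev/fractional-power index $\beta$ (resp. $\beta'$), the time-integrability index $r$ (resp. $r'$) coming from the maximal-regularity embedding, and the time weight $t^{1-\mu}$ — and verifying that the constraint~\eqref{eq:q,p,mu} is precisely the compatibility condition, with no slack lost. Concretely, one must confirm that the embedding $\IE_{p,\mu}^T(X_0,X_1) \hookrightarrow \L^{r}(0,T;X_\beta)$ holds with $\frac{1}{r}$ at the value forced by $\beta = \frac12 + \delta$ and $\frac1p$, $\mu$; this uses that $X_\beta = (X_0,X_1)_{[\beta]}$ and the characterization of $(X_0,X_1)_{\mu-1/p,p}$, together with the real-interpolation description of the trace space and the elementary Sobolev-type embedding $\H^{1,p}_\mu \cap \L^p_\mu(\cdot;X_1) \hookrightarrow \L^r_\mu(\cdot;X_\beta)$. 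Once the indices are pinned down, the estimate is a one-line Hölder application; the real work is the index arithmetic, which I would organize by first solving for $r,r'$ in terms of $\delta,\delta',p,\mu$ using Lemma~\ref{lem:bilinearmap}'s relation $\delta+\delta' = \frac12(\frac{n}{q}-1)$, and then reading off that summability of $t^{-2(1-\mu)p}$ on $(0,T)$ is equivalent to $2(1-\mu)p < 1$ after absorbing the Hölder defect, which rearranges to~\eqref{eq:q,p,mu}.
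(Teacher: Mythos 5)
Your high-level plan (pointwise application of Lemma~\ref{lem:bilinearmap}, followed by H\"older in time and embeddings of $\IE_{p,\mu}^T$ into time-weighted Lebesgue spaces valued in fractional-power domains) is the right skeleton, and it is also what the paper does. However, there are two concrete errors in your bookkeeping, and one important piece of the $T$-independence argument is missing.

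First, the weight arithmetic is wrong. You pull a factor $t^{-(1-\mu)}$ out of \emph{each} of $U_1$ and $U_2$, producing a free weight $t^{-2(1-\mu)}$ per product, and then ask when $t^{-2(1-\mu)p}$ is integrable. But the estimate one actually needs (to feed into the maximal $\L^p_\mu$-regularity of $\cA$) is on $\|\Phi(U_1,U_2)\|_{\L^p_\mu(0,T;X_0)}$, which already carries \emph{one} factor $t^{1-\mu}$; the correct move is to split that single weight via conjugate indices, setting $1-\mu = r(1-\sigma)=r'(1-\sigma')$ with $1/r+1/r'=1$, so that $t^{1-\mu}=t^{1-\sigma}\cdot t^{1-\sigma'}$ and H\"older immediately gives
\[
\|\Phi(U_1,U_2)\|_{\L^p_\mu(0,T;X_0)}\lesssim \|U_1\|_{\L^{pr}_\sigma(0,T;X_\beta)}\|U_2\|_{\L^{pr'}_{\sigma'}(0,T;X_{\beta'})}+\text{(symmetric)}.
\]
There is no residual unpaired power of $t$, so no free integrability condition of the form $2(1-\mu)p<1$ ever arises. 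Consequently your claim that $2(1-\mu)p<1$ ``rearranges to'' the hypothesis~\eqref{eq:q,p,mu} is false: $2(1-\mu)p<1$ is $\mu>1-\tfrac{1}{2p}$, which has nothing to do with $\mu\geq\tfrac{n}{2q}-\tfrac12+\tfrac1p$. The actual role of~\eqref{eq:q,p,mu} is different: one invokes the weighted Sobolev embedding $\H^{1-\beta,p}_\mu(0,1;X_\beta)\hookrightarrow\L^{pr}_\sigma(0,1;X_\beta)$ (Meyries--Veraar), whose validity requires a pair of inequalities in $\beta,\beta',\mu,p,r$; eliminating $\sigma,\sigma'$ via the weight relation and using $\beta+\beta'=\tfrac12(n/q+1)$ from Lemma~\ref{lem:bilinearmap}, one finds that an admissible H\"older exponent $r\in(1,\infty)$ exists precisely when $\mu\geq\tfrac{n}{2q}+\tfrac1p-\tfrac12$. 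So~\eqref{eq:q,p,mu} is the \emph{existence condition for a valid H\"older split}, not an integrability condition on a leftover weight.

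Second, the $T$-independence of $C$ is more delicate than ``a standard fact.'' The Meyries--Veraar embedding is used on $(0,1)$, so one rescales $V_i(t):=U_i(tT)$; the rescaled norms pick up explicit powers of $T$, and the norm of $\H^{1-\beta,p}_\mu(0,1;X_\beta)$ has to be expressed via the fractional time derivative $\partial_t^{1-\beta}$ (using that $U_i|_{t=0}=0$) and controlled by $\|\partial_t V_i\|$ and $\|(1+\cA)V_i\|$ via a mixed-derivative estimate (Kalton--Weis), tuned with a free parameter $\nu$ chosen $T$-dependently so that all powers of $T$ cancel. Without this step the constant one gets from the naive embedding depends on $T$.
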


\begin{proof}
Let $\mu \in (1/p , 1]$. Then, for $1 < r < \infty$ and
$\frac{1}{r} + \frac{1}{r^{\prime}} = 1$, 
define $\sigma , \sigma^{\prime} \in (\mu , 1]$ as
\begin{align}
\label{Eq: Relation of weights}
    1 - \mu = r (1 - \sigma) = r^{\prime} (1 - \sigma^{\prime}).
\end{align}
With appropriate choices of $\beta , \beta^{\prime} \in (\frac{1}{2} , 1)$, Lemma~\ref{lem:bilinearmap} implies
\begin{align*}
    \| \Phi(U_1,U_2) \|_{\L^p_{\mu} (0 , T ; X_0)} 
    &\leq C \Big( \| \| U_1 \|_{X_{\beta}} 
    \| U_2 \|_{X_{\beta^{\prime}}} \|_{\L^p_{\mu} (0 , T)} 
    + \| \| U_1 \|_{X_{\beta^{\prime}}} 
    \| U_2 \|_{X_{\beta}} \|_{\L^p_{\mu} (0 , T)} \Big) \\
    &\leq C \Big( \| U_1 \|_{\L^{pr}_{\sigma} (0 , T ; X_{\beta})} 
    \| U_2 \|_{\L^{p r^{\prime}}_{\sigma^{\prime}} (0 , T ; X_{\beta^{\prime}})} \\
   &\qquad + \| U_1 \|_{\L_{\sigma^{\prime}}^{p r^{\prime}} ( 0 , T ; X_{\beta^{\prime}})} 
    \| U_2 \|_{\L^{p r}_{\sigma} (0 , T ; X_{\beta})} \Big).
\end{align*}
We proceed by virtue of a weighted Sobolev inequality. Let, for a moment, $T$ be finite. To have an implicit constant that is independent of $T$, we rescale $U_1$ and $U_2$ by defining $V_1 , V_2 \in E^1_{p , \mu}$ via
\begin{align*}
 V_1 (t) := U_1 (t T) \quad \text{and} \quad V_2 (t) := U_2 (t T) \qquad (t \in (0 , 1)). 
\end{align*}
In the following, $C > 0$ denotes a generic constant independent of $T$. The weighted Sobolev embeddings
\[
    \H^{1 - \beta , p}_{\mu} (0 , 1 ; X_{\beta}) 
    \hookrightarrow \L^{p r}_{\sigma} (0 , 1 ; X_{\beta}) 
    \quad \text{and} \quad 
    \H^{1 - \beta^{\prime} , p}_{\mu} (0 , 1 ; X_{\beta^{\prime}}) 
    \hookrightarrow \L^{p r^{\prime}}_{\sigma^{\prime}} (0 , 1 ; X_{\beta^{\prime}})
\]
stated in~\cite[Cor.~1.4]{Meyries-Veraar} are true provided
\begin{align}
\label{Eq: Weighted Sobolev}
    1-\beta-\frac{1}{p}-(1-\mu) \geq -\frac{1}{p r} - (1-\sigma) 
    \quad \mbox{and} \quad 
    1-\beta^{\prime}-\frac{1}{p}-(1-\mu) \geq -\frac{1}{p r^{\prime}}-(1-\sigma^{\prime}).
\end{align}
Observe that this imposes additional constraints on the parameter $r$. Let us show that such a choice is possible. Using~\eqref{Eq: Relation of weights}, the previous two inequalities translate to
\begin{align*}
 1 - \beta - \frac{1}{p} - (1 - \mu) \geq - \frac{1}{r} \Big( 1 + \frac{1}{p} - \mu \Big) \quad \mbox{and} \quad 1 - \beta^{\prime} \geq \frac{1}{r} \Big( 1 + \frac{1}{p} - \mu \Big).
\end{align*}
Consequently, such an $r$ exists if the inequalities
\begin{align}
\label{Eq: Auxiliary condition r}
 1 < r < \infty \quad \text{and} \quad \Big(\beta + \frac{1}{p} - \mu\Big) \Big( 1 + \frac{1}{p} - \mu \Big)^{-1} \leq \frac{1}{r} \leq (1 - \beta^{\prime}) \Big( 1 + \frac{1}{p} - \mu \Big)^{-1}
\end{align}
are simultaneously true. We first verify that the left-hand side of the second condition in~\eqref{Eq: Auxiliary condition r} is indeed smaller than the right-hand side. Using~\eqref{Eq: Sum of deltas} this is equivalent to
\begin{align*}
 \frac{1}{2} \Big( \frac{n}{q} + 1 \Big) + \frac{1}{p} - \mu \leq 1 \quad \Leftrightarrow \quad \frac{n}{2 q} + \frac{1}{p} - \frac{1}{2}  \leq \mu.
\end{align*}
Now, the condition $1 < r < \infty$ can be guaranteed if the left-hand side of the second condition in~\eqref{Eq: Auxiliary condition r} is strictly smaller than 1. But this is true since $\beta < 1$. Thus, an appropriate choice of $r$ can be made in such a way that~\eqref{Eq: Weighted Sobolev} is valid. As a consequence, we can use the weighted Sobolev inequality from~\cite[Cor.~1.4]{Meyries-Veraar} to deduce
\begin{align*}
 \| U_1 \|_{\L^{p r}_{\sigma} (0 , T ; X_{\beta})} = T^{1 - \sigma + \frac{1}{p r}} \| V_1 \|_{\L^{p r}_{\sigma} (0 , 1 ; X_{\beta})} \leq C T^{1 - \sigma + \frac{1}{p r}} \| V_1 \|_{\H^{1 - \beta , p}_{\mu} (0 , 1 ; X_{\beta})}.
\end{align*}
We write the norm of $\H^{1 - \beta , p}_{\mu} (0 , 1 ; X_{\beta})$ in terms of fractional powers as
\begin{align*}
 \| V_1 \|_{\H^{1 - \beta , p}_{\mu} (0 , 1 ; X_{\beta})} \simeq \| \partial_t^{1 - \beta} (1 + \cA^{\beta}) V_1 \|_{\L^{p}_{\mu} (0 , 1 ; X_0)} + \|  (1 + \cA)^{\beta} V_1 \|_{\L^{p}_{\mu} (0 , 1 ; X_0)},
\end{align*}
where $\partial_t$ denotes the time derivative on $\L^p_{\mu} (0 , 1 ; X_0)$ with domain $\dom(\partial_t) = \{ w \in \H^{1 , p}_{\mu} (0 , 1 ; X_0) : w|_{t = 0} = 0 \}$. Since this operator is invertible, we may even write
\begin{align*}
 \| V_1 \|_{\H^{1 - \beta , p}_{\mu} (0 , 1 ; X_{\beta})} \simeq \| \partial_t^{1 - \beta} (1 + \cA)^{\beta} V_1 \|_{\L^{p}_{\mu} (0 , 1 ; X_0)}.
\end{align*}
For a given parameter $\nu > 0$, we apply the Kalton-Weis theorem~\cite[Thm.~4.5.6]{PS2016} to the $\cR$-bounded holomorphic function $f(z) =z^{1 - \beta} (1 + \cA)^{\beta} (\nu z + \nu^{1 - \frac{1}{\beta}} (1 + \cA))^{-1}$ and observe that its $\cR$-bound is independent of $\nu$. Thus, we have
\begin{align*}
 \| U_1 \|_{\L^{p r}_{\sigma} (0 , T ; X_{\beta})} &\leq C T^{1 - \sigma + \frac{1}{p r}} \| \partial_t^{1 - \beta} (1 + \cA)^{\beta} V_1 \|_{\L^{p}_{\mu} (0 , 1 ; X_0)} \\
 &\leq C T^{1 - \sigma + \frac{1}{p r}} \Big( \nu \| \partial_t V_1 \|_{\L^{p}_{\mu} (0 , 1 ; X_0)} + \nu^{1 - \frac{1}{\beta}}\| (1 + \cA) V_1 \|_{\L^{p}_{\mu} (0 , 1 ; X_0)} \Big)
\end{align*}
with a constant $C$ being independent of $T$ and $\nu$. We insert $V_1 (t) = U_1 (t T)$ and obtain
\begin{align*}
 \| U_1 \|_{\L^{p r}_{\sigma} (0 , T ; X_{\beta})} \leq C T^{\mu - \sigma + \frac{1}{p r} - \frac{1}{p}} \Big( \nu T \| \partial_t U_1 \|_{\L^{p}_{\mu} (0 , T ; X_0)} + \nu^{1 - \frac{1}{\beta}}\| (1 + \cA) U_1 \|_{\L^{p}_{\mu} (0 , T ; X_0)} \Big).
\end{align*}
Observe that $\mu - \sigma + \frac{1}{p r} - \frac{1}{p} = \beta - 1$. We choose $\nu = T^{- \beta}$ and deduce
\begin{align*}
 \| U_1 \|_{\L^{p r}_{\sigma} (0 , T ; X_{\beta})} \leq C \Big( \| \partial_t U_1 \|_{\L^{p}_{\mu} (0 , T ; X_0)} + \| (1 + \cA) U_1 \|_{\L^{p}_{\mu} (0 , T ; X_0)} \Big) \leq C \| U_1 \|_{\IE_{p , \mu}^T (X_0 , X_1)}
\end{align*}
for some constant $C$ independent of $T$. Because of this independence, we might even transfer this estimate to the case $T = \infty$. One derives an analogous estimate for $U_2$ and thus arrives at estimate~\eqref{eq:space-time estimate Phi}.
\end{proof}

We are now in the position to apply the contraction principle to establish the local 
strong well-posedness theorem.


\begin{proof}[Proof of Theorem~\ref{thm:localex}]
We start by rewriting the abstract Keller-Segel-Navier-Stokes system \eqref{Eq: abstract system} 
as the fixed point problem
\[
U(t)= \e^{-t{\mathcal{A}}}U_0 +(\partial_t+{\mathcal{A}})^{-1}\Phi(U,U)(t), \quad t\in[0,T].
\]
If $p , q \in (1 , \infty)$ fulfill the conditions of Theorem~\ref{thm:localex}, we have the bilinear estimate from Lemma~\ref{lem:bilinearestimates} at our disposal. Here, we choose $\mu = \frac{n}{2 q} + \frac{1}{p} - \frac{1}{2}$. Note that $\mu > \frac{1}{p}$ is true since $q < n$ and that $\mu \leq 1$ holds by the conditions assumed in the theorem. Thus, by virtue of the contraction mapping principle, Lemma~\ref{Lem: fixed point}~\eqref{ii}, it suffices to prove that 
\[
\|t\mapsto \e^{-t{\mathcal{A}}}U_0\|_{\IE_{p , \mu}^T (X_0 , X_1)}\xrightarrow[T\to 0]{}0,
\]
which is the case whenever $U_0\in(X_0,X_1)_{\mu-1/p,p}$. The characterization
of the latter space in terms of appropriate Besov spaces is delicate since the 
domain of the associated operators are not explicit. By the reiteration 
theorem~\cite[Thm.~2, Sec.~1.10.3]{Triebel} we have
\[
 \big( X_0 , X_1 \big)_{\alpha , p} = \big( X_0 , X_{1/2} \big)_{2 \alpha , p} 
 \quad \text{for} \quad 0 < \alpha < \frac{1}{2}.
\]
Combining~\eqref{Eq: X_1/2} with the interpolation results 
in~\cite[Thm.~2.13]{Triebel_Lipschitz} and Proposition~\ref{Prop: Real interpolation} we arrive at
\[
 \big( X_0 , X_1 \big)_{\alpha , p} 
 = \B^{2 \alpha - 1}_{q , p}(\Omega) \times \B^{2 \alpha}_{q^{*} , p}(\Omega)
 \times \B^{2 \alpha}_{q , p , 0 ,\sigma}(\Omega) \quad 
 \text{for} \quad 0 < \alpha < \frac{1}{2}.
\]
Taking $\alpha = \mu - \frac{1}{p} = \frac{n}{2 q} - \frac{1}{2}$ yields the claim.
\end{proof}

\section{Global existence for small initial data}
\label{Sec: Global existence for small initial data}

\noindent In this section we aim for a global strong existence result for the
system~\eqref{Eq: KSNS} subject to small initial data. Recall, that 
for a fixed initial datum $u_0 \in \B^{n/q-2}_{q , p , 0}(\Omega)$ we 
defined its mean value as
\[
    u_{0 , \Omega} := \frac{1}{\lvert \Omega \rvert} \langle u_0 , 1 
    \rangle_{\B^{n / q - 2}_{q , p , 0} , \B^{2 - n / q}_{q^{\prime} , p^{\prime}}}.
\]
We decompose
\[
    U = \widehat{U} + \begin{pmatrix} u_s \\ v_s \\ w_s \end{pmatrix} 
    \quad \text{with} \quad u_s := u_{0 , \Omega} =: v_s
\]
where $\widehat{U}$ and $w_s$ are solutions to the following equations. 
The function $w_s$ is supposed to solve the stationary Navier-Stokes system given by
\begin{align}
\label{Eq: Stationary Navier-equations}
\left\{
\begin{aligned}
   - \Delta w_s + (w_s \cdot \nabla) w_s + \nabla \pi_s &= f u_s && \text{in } \Omega, \\
   \divergence{w_s} &= 0 && \text{in } \Omega, \\
   w_s &= 0 && \text{on } \partial \Omega.
\end{aligned} \right.
\end{align}
Furthermore, $\widehat{U} = (\widehat{u} , \widehat{v} , \widehat{w})$ is subject to
\begin{align}
\label{Eq: Mean valued free adapted equation}
    \left\{ 
    \begin{aligned}
    \partial_t \widehat{u} - \Delta \widehat{u} + w_s \cdot \nabla \widehat{u} 
    + u_s \Delta \widehat{v} &= \Phi_1(\widehat{U},\widehat{U})  
    && \text{in } (0 , \infty) \times \Omega, \\
    \partial_t \widehat{v} - \widehat{u} - \Delta \widehat{v} + \widehat{v} 
    + w_s \cdot \nabla \widehat{v} &= \Phi_2(\widehat{U},\widehat{U}) 
    && \text{in } (0 , \infty) \times \Omega, \\
    \partial_t \widehat{w} - \Delta \widehat{w} + \nabla \widehat{\pi} 
    - f \widehat{u} + (w_s \cdot \nabla) \widehat{w} + (\widehat{w} \cdot \nabla) w_s 
    &= - (\widehat{w} \cdot \nabla) \widehat{w} 
    && \text{in } (0 , \infty) \times \Omega, \\
   \divergence{\widehat{w}} &= 0 && \text{in } (0 , \infty) \times \Omega, \\
   \partial_{\nu} \widehat{u} = \partial_{\nu} \widehat{v} = 0 \quad \text{and} \quad \widehat{w} &= 0 && \text{on } (0 , \infty) \times \partial \Omega,
    \end{aligned} 
    \right.
\end{align}
where $\Phi$ is the map defined by \eqref{eq:bilinearmap}, where the system is 
complemented by the initial data
\begin{align*}
\left\{
\begin{aligned}
    \widehat{u}_0 &= u_0 - u_s && \text{in } \Omega, \\
    \widehat{v}_0 &= v_0 - v_s  && \text{in } \Omega, \\
    \widehat{w}_0 &= w_0 - w_s  && \text{in } \Omega.
\end{aligned} \right.
\end{align*}

\begin{lemma}
\label{Eq: Solvability stationary Navier-Stokes}
For any $f \in \L^2 (\Omega ; \IR^n)$ there exists a weak solution 
$w_s \in \dom(A)$, where $A$ denotes the Stokes operator on $\L^2_{\sigma} (\Omega)$, 
solving~\eqref{Eq: Stationary Navier-equations}. In particular, 
$w_s \in \L^{\infty} (\Omega ; \IR^n) \cap \W^{1 , n} (\Omega ; \IR^n)$ and
\[
\| w_s \|_{\W^{1 , n} (\Omega)} \leq C \big( \lvert u_s \rvert \| f \|_{\L^2 (\Omega)} 
 + \lvert u_s \rvert^2 \| f \|_{\L^2 (\Omega)}^2 \big).   
\] 
\end{lemma}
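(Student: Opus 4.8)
The existence of a weak solution $w_s \in \W^{1,2}_{0,\sigma}(\Omega)$ to the stationary Navier--Stokes system~\eqref{Eq: Stationary Navier-equations} with right-hand side $f u_s$ is classical (Leray--Hopf); here I would invoke the standard Galerkin argument together with the Leray--Schauder fixed-point theorem, noting that on a bounded domain the trilinear form $b(u,v,v) = \int_\Omega (u \cdot \nabla) v \cdot v$ vanishes for $u,v \in \W^{1,2}_{0,\sigma}(\Omega)$, which gives the a priori bound. Concretely, testing the equation with $w_s$ and using that the convection term drops yields $\|\nabla w_s\|_{\L^2(\Omega)}^2 = u_s \langle f , w_s\rangle \leq |u_s| \|f\|_{\W^{-1,2}(\Omega)} \|\nabla w_s\|_{\L^2(\Omega)}$, hence $\|w_s\|_{\W^{1,2}_{0,\sigma}(\Omega)} \leq C |u_s| \|f\|_{\L^2(\Omega)}$ by Poincaré and the embedding $\L^2 \hookrightarrow \W^{-1,2}$.

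\textbf{Regularity step.} To upgrade to $w_s \in \dom(A)$ (where $A$ is the $\L^2$-Stokes operator) I would rewrite~\eqref{Eq: Stationary Navier-equations} as the linear Stokes resolvent-type problem $A w_s = \IP\big(f u_s - (w_s \cdot \nabla) w_s\big)$ and check that the right-hand side lies in $\L^2_\sigma(\Omega)$. For $n=2$ this is immediate from $w_s \in \W^{1,2}_0$ and the embedding $\W^{1,2}(\Omega) \hookrightarrow \L^r(\Omega)$ for all $r < \infty$, so $(w_s \cdot \nabla) w_s \in \L^q(\Omega)$ for some $q>1$; for $n=3$ one has $w_s \in \L^6(\Omega)$ and $\nabla w_s \in \L^2(\Omega)$, giving $(w_s \cdot \nabla) w_s \in \L^{3/2}(\Omega)$, and a bootstrap through the $\L^q$-theory of the Stokes operator in Lipschitz domains (valid for $q$ near $2$ in the admissible range, cf.~\cite{Shen2012,Gabel_Tolksdorf}) pushes this to $\L^2$. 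Once $w_s \in \dom(A) \subset \W^{1,q}(\Omega)$ for $q$ in the admissible Lipschitz range with $q > n$, the Sobolev embedding $\W^{1,q}(\Omega) \hookrightarrow \L^\infty(\Omega)$ and $\W^{1,q}(\Omega) \hookrightarrow \W^{1,n}(\Omega)$ give $w_s \in \L^\infty(\Omega;\IR^n) \cap \W^{1,n}(\Omega;\IR^n)$. The quantitative bound $\|w_s\|_{\W^{1,n}(\Omega)} \leq C(|u_s|\|f\|_{\L^2} + |u_s|^2 \|f\|_{\L^2}^2)$ then follows by tracking constants: the linear term $|u_s|\|f\|_{\L^2}$ comes from $\IP(f u_s)$ and the quadratic term from estimating $\|(w_s \cdot \nabla) w_s\|$ via the a priori bound $\|w_s\|_{\W^{1,2}_{0,\sigma}} \leq C|u_s|\|f\|_{\L^2}$ squared.

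\textbf{Main obstacle.} The delicate point is that in a general bounded Lipschitz domain one does \emph{not} have $\dom(A) = \H^2(\Omega) \cap \W^{1,2}_{0,\sigma}(\Omega)$, so the regularity bootstrap must be carried out entirely within the $\L^q$-scale where the Stokes operator in Lipschitz domains is known to be well-behaved, i.e.\ for $q$ satisfying~\eqref{Eq: Lipschitz condition on q}. One must verify that the exponents produced by the Sobolev embeddings at each step of the bootstrap stay inside this admissible window; for $n=3$ this is the only nontrivial check, since $3/2$ and $2$ are both comfortably within reach of $2$, and the final target exponent $q>3$ needed for $\W^{1,q}\hookrightarrow\L^\infty$ is available precisely because $\eps = \eps(\Omega) > 0$ in~\eqref{Eq: Lipschitz condition on q} (and, as noted in the excerpt, if $\eps > 2/n$ all $1 < q < \infty$ are admissible). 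I would also remark that one actually only needs $\L^{q^*}$-regularity of $w_s$ rather than full $\dom(A)$ for the subsequent arguments, but stating it as $w_s \in \dom(A)$ is cleanest; the reference to the $\L^q$-Stokes theory in~\cite{Shen2012,Fabes_Mendez_Mitrea_1998,Gabel_Tolksdorf} makes the bootstrap rigorous.
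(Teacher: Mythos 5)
Your existence step and the first bootstrap (energy solution, $\|(w_s\cdot\nabla)w_s\|_{\L^{3/2}}\leq C\|\nabla w_s\|_{\L^2}^2$, then regard the convection term as a right-hand side for the linear Stokes problem, then upgrade to $\dom(A)$ on $\L^2_\sigma$) follow the same route as the paper, and your tracking of the linear and quadratic contributions to the constant is consistent with the paper's estimate. The genuine gap is in your final step to $\L^\infty$. You claim ``$w_s\in\dom(A)\subset\W^{1,q}(\Omega)$ for $q$ in the admissible Lipschitz range with $q>n$'' and then use the supercritical Sobolev embedding. In a general bounded Lipschitz domain this inclusion is exactly what is \emph{not} available from the results you cite: the known $\L^2$-regularity for the Stokes operator (Brown--Shen, Thm.~2.12) gives $\dom(A)\hookrightarrow\W^{3/2,2}(\Omega)$, which in dimension $3$ yields only the borderline $\W^{1,3}(\Omega)$ -- this is precisely how the paper obtains the $\W^{1,n}$ bound -- and $\W^{1,3}(\Omega)\not\hookrightarrow\L^\infty(\Omega)$. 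The admissible window~\eqref{Eq: Lipschitz condition on q} concerns which $\L^q$-realizations of the Stokes operator are well behaved (resolvent bounds, square-root characterization); it does not by itself produce first-order integrability of $\dom(A)$ beyond the exponent $n$, and since $f$ is only in $\L^2$ you cannot trade up by solving in $\L^q$ with $q>2$. So as written, the $\L^\infty$ conclusion is unsupported.

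The paper closes this step differently: it does not pass through $\W^{1,q}$ with $q>n$ at all, but invokes the endpoint embeddings $\dom(A)\hookrightarrow\L^\infty(\Omega;\IR^n)$ directly, from Brown--Shen~\cite[Thm.~2.12]{Brown_Shen_1995} for $n=3$ and Mitrea--Wright~\cite[Thm.~10.6.2]{Mitrea_Wright} for $n=2$; these are nontrivial results (based on nontangential maximal function estimates), not consequences of Sobolev embeddings of fractional regularity. Your argument can be repaired either by citing these endpoint results, or by a fractional-power argument in the spirit of Lemma~\ref{Lem: Embeddings fractional power domains}: since the right-hand side lies in $\L^{\rho}_\sigma(\Omega)$ for $\rho$ slightly above $n/2$, one has $w_s\in\dom(A_\rho)$ by consistency, and the square-root characterization~\eqref{Eq: Square roots} then yields $\dom(A_\rho)\hookrightarrow\W^{1,\kappa}_{0,\sigma}(\Omega)$ with $\kappa>n$ (here the strict inequality uses $\eps>0$), whence $\L^\infty$ by Sobolev embedding. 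Either way, the step needs an explicit argument or citation; also note that the quantitative $\W^{1,n}$ bound rests on the fractional regularity statement of Brown--Shen rather than on the resolvent estimates of~\cite{Shen2012,Gabel_Tolksdorf} that you reference.
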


\begin{proof}
We start with the case $n = 3$. Let 
$w_s \in \W^{1 , 2}_{0 , \sigma} (\Omega)$ be a solution to~\eqref{Eq: Stationary Navier-equations} constructed by energy methods, see, e.g.,~\cite[Thm. 1.2, Chap.~II]{Temam}. It satisfies
\begin{align}
\label{Eq: Est w_s}
 \| \nabla w_s \|_{\L^2 (\Omega)} \leq \lvert u_s \rvert \| f \|_{\W^{-1 , 2} (\Omega)}.
\end{align}
Using, in addition, a Sobolev embedding, we estimate the nonlinearity as
\[
 \| (w_s \cdot \nabla) w_s \|_{\L^{3 / 2} (\Omega)} 
 \leq \| w_s \|_{\L^6 (\Omega)} \| \nabla w_s \|_{\L^2 (\Omega)} 
 \leq C \| \nabla w_s \|_{\L^2 (\Omega)}^2 
 \leq C \lvert u_s \rvert^2 \| f \|_{\L^2 (\Omega)}^2
\]
and regard it as a right-hand side for the linear Stokes system. Then, the 
regularity result of Brown and Shen~\cite[Thm.~2.12]{Brown_Shen_1995} yields that 
$w_s \in \W^{3/2 , 2} (\Omega) \hookrightarrow \W^{1 , 3} (\Omega)$ and
\begin{align*}
 \| w_s \|_{\W^{1 , 3} (\Omega)} 
 \leq C \| w_s \|_{\W^{3/2 , 2} (\Omega)} &\leq C \| u_s f 
 - (w_s \cdot \nabla) w_s \|_{\L^{3/2} (\Omega)} \\
 &\leq C \big( \lvert u_s \rvert \| f \|_{\L^2 (\Omega)} 
 + \lvert u_s \rvert^2 \| f \|_{\L^2 (\Omega)}^2 \big).
\end{align*}
Now, that $w_s$ has more regularity, we can bootstrap the regularity of the 
nonlinearity and arrive at
\[
 \| (w_s \cdot \nabla) w_s \|_{\L^2 (\Omega)} 
 \leq \| w_s \|_{\L^6 (\Omega)} \| \nabla w_s \|_{\L^3 (\Omega)} 
 \leq C \big( \lvert u_s \rvert \| f \|_{\L^2 (\Omega)} 
 + \lvert u_s \rvert^2 \| f \|_{\L^2 (\Omega)}^2 \big) \lvert u_s \rvert \| f \|_{\L^2 (\Omega)}.
\]
Regarding the nonlinearity again as a right-hand side we obtain, 
since $f \in \L^2 (\Omega ; \IR^n)$, that $w_s$ is in the domain of $A$ on 
$\L^2_{\sigma} (\Omega)$. Since $\dom(A)$ embeds into $\L^{\infty} (\Omega ; \IR^n)$, 
see~\cite[Thm.~2.12]{Brown_Shen_1995}, we conclude the proof of the three-dimensional case. \par
The case $n = 2$ is similar. Here, one may directly use the regularity result of Mitrea and Wright~\cite[Thm.~10.6.2]{Mitrea_Wright} for the embedding into $\L^{\infty} (\Omega ; \IR^2)$ once $(w_s \cdot \nabla) w_s$ is shown to belong to, e.g., $\L^{3/2} (\Omega ; \IR^2)$.
\end{proof}

The system~\eqref{Eq: Mean valued free adapted equation} can be rewritten 
equivalently as an evolution equation
\begin{align}
\label{Eq: Abstract mean value free equation}
\left\{ \begin{aligned}
    \partial_t \widehat{U} + \cA \widehat{U} + \cB_s \widehat{U} 
    &= \Phi(\widehat{U} , \widehat{U}) && \text{for } t \in (0 , \infty), \\
    \widehat{U} (0) &= \widehat{U}_0,
    \end{aligned} \right.
\end{align}
on the space $\cX_0$ defined in Section~\ref{Sec: Preliminaries and Main Results}. 
The operator $\cA$, introduced in~\eqref{Eq: Operator matrix}, 
is now equipped with the domain $\cX_1$ and the operator $\cB_s$ is given by
\[
    \cB_s := \begin{pmatrix}
    w_s \nabla & u_s \Delta & 0 \\
    0 & w_s \nabla & 0 \\
    0 & 0 & \IP [(w_s \nabla) \, \boldsymbol{\cdot} \,] + \IP [( \,\boldsymbol{\cdot}\, \nabla)w_s] 
    \end{pmatrix},
\]
defined on the domain $\dom(\cB_s) := \cX_1$. The nonlinearity 
$\Phi(\widehat{U},\widehat{U})$ is the same as in~\eqref{eq:bilinearmap}.

\begin{lemma}
\label{Lem: Perturbation}
Let $p \in (1 , \infty)$ and $q \in (\frac{n}{2} , \infty)$ be subject to~\eqref{Eq: Condition on Lebesgue exponent}. There exists $\delta > 0$ such that for all $f \in \L^n (\Omega ; \IR^n)$ 
satisfying 
\begin{align*}
 \lvert u_s \rvert \|f\|_{\L^2(\Omega)}+\lvert u_s \rvert \leq \delta
\end{align*}
the operator $\cA+\cB_s$ has maximal $\L^p_{\mu}$-regularity on $(0,\infty)$ 
on the ground space $\cX_0$. In particular, $0 \in \rho(\cA+\cB_s)$.
\end{lemma}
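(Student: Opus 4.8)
The plan is to view $\cA + \cB_s$ as a relatively small and genuinely lower-order perturbation of $\cA$ and then to invoke the stability of maximal regularity under such perturbations. By Proposition~\ref{prop:Amaxreg}, $\cA$ with domain $\cX_1$ has maximal $\L^p_\mu$-regularity on $(0,\infty)$ on the ground space $\cX_0$; in particular $0 \in \rho(\cA)$, the graph norm of $\cA$ is equivalent to $\|\cdot\|_{\cX_1}$, and, by~\cite[Thm.~3.5.4]{PS2016}, it suffices to argue with $\mu = 1$. I shall also use that, by the bounded $\H^\infty$-calculus of the diagonal entries of $\cA$ and the product structure of the spaces recorded before Lemma~\ref{Lem: Embeddings fractional power domains}, one has $\cX_\gamma := [\cX_0 , \cX_1]_\gamma = \H^{-1 + 2\gamma , q}(\Omega) \cap \L^q_{\mathrm{av}}(\Omega) \times \dom((-\Delta_{q^*})^\gamma) \times \dom(A^\gamma)$ for $0 < \gamma < 1$, together with the interpolation inequality $\|x\|_{\cX_\gamma} \leq C \|x\|_{\cX_0}^{1-\gamma} \|x\|_{\cX_1}^{\gamma}$.

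The key step is to show that for $\gamma = \tfrac12 + \eta$ with $0 < \eta < \tfrac{n\eps}{4}$ the operator $\cB_s$ extends to a bounded map $\cX_\gamma \to \cX_0$ with
\[
 \| \cB_s x \|_{\cX_0} \leq C_0\, \delta\, \| x \|_{\cX_\gamma} \qquad (x \in \cX_\gamma),
\]
$C_0$ independent of the data. For this $\gamma$, Lemma~\ref{Lem: Embeddings fractional power domains} (with $\delta = \delta^\prime = \eta$) gives $\dom((-\Delta_{q^*})^\gamma) \hookrightarrow \W^{1,r}(\Omega)$ and $\dom(A^\gamma) \hookrightarrow \W^{1,\kappa}_{0,\sigma}(\Omega)$ with $r,\kappa$ as in~\eqref{Eq: Definition of kappa}, and $\W^{1,r}(\Omega) \hookrightarrow \W^{1,q}(\Omega)$ on the bounded domain $\Omega$ since $r > q^* > q$. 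By Lemma~\ref{Eq: Solvability stationary Navier-Stokes} and the smallness hypothesis, $\| w_s \|_{\W^{1,n}(\Omega)} \leq C(\delta + \delta^2)$, so (taking $\delta \leq 1$) $\| w_s \|_{\L^a(\Omega)} \leq C_a\, \delta$ for every $a < \infty$, while $\lvert u_s \rvert \leq \delta$. Writing $x = (x_1,x_2,x_3)$, one then estimates componentwise: in $\W^{-1,q}_{\mathrm{av}}(\Omega)$ the term $w_s \cdot \nabla x_1 = \divergence(w_s x_1)$ is bounded by $\| w_s \|_{\L^a}\| x_1 \|_{\L^b} \leq C\delta \|x_1\|_{\H^{2\eta,q}}$ with $\tfrac1a + \tfrac1b = \tfrac1q$ and $\tfrac1b \in (\tfrac1q - \tfrac{2\eta}{n}, \tfrac1q)$, which is admissible since $\H^{2\eta,q}(\Omega) \hookrightarrow \L^b(\Omega)$, and $u_s \Delta x_2$ (read as $-u_s\divergence\nabla x_2$, consistent with $\cB_s$ on $\cX_1$) is bounded by $\lvert u_s \rvert \| \nabla x_2 \|_{\L^q} \leq C\delta\| x_2 \|_{\W^{1,r}}$; in $\L^{q^*}(\Omega)$ the term $w_s \cdot \nabla x_2$ is bounded by $\| w_s \|_{\L^c}\| \nabla x_2 \|_{\L^d}$ with $\tfrac1{q^*} = \tfrac1c + \tfrac1d$, $\tfrac1c$ small, $d \leq r$; in $\L^q_\sigma(\Omega)$, using boundedness of $\IP$ on $\L^q(\Omega;\IR^n)$, the term $(w_s\cdot\nabla)x_3$ is bounded by $\| w_s \|_{\L^c}\| \nabla x_3 \|_{\L^d}$ ($\tfrac1q = \tfrac1c + \tfrac1d$, $\tfrac1c$ small, $d \leq \kappa$) and $(x_3\cdot\nabla)w_s$ by $\| x_3 \|_{\L^{q^*}}\| \nabla w_s \|_{\L^n} \leq C\delta\| x_3 \|_{\W^{1,q}_{0,\sigma}}$ via the Sobolev embedding $\W^{1,q}_{0,\sigma}(\Omega)\hookrightarrow\L^{q^*}(\Omega)$. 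Summing and using~\eqref{Eq: Fractional powers} yields the claimed bound.

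With this estimate the rest is routine. Since $0 \in \rho(\cA)$, $\cA^{-1}$ maps $\cX_0$ boundedly into $\cX_1 \hookrightarrow \cX_\gamma$, hence $\cB_s \cA^{-1} \in \Lop(\cX_0)$ with norm $\leq C_1\delta$; choosing $\delta$ so small that $C_1 \delta < 1$, the operator $\Id + \cB_s \cA^{-1}$ is invertible on $\cX_0$ by Neumann's series, so $\cA + \cB_s = (\Id + \cB_s \cA^{-1})\cA$ is boundedly invertible, i.e.\ $0 \in \rho(\cA + \cB_s)$. Moreover, combining the key estimate with the interpolation inequality, Young's inequality and the equivalence $\| x \|_{\cX_1} \simeq \| \cA x \|_{\cX_0}$, one gets $\| \cB_s x \|_{\cX_0} \leq C_0\delta(\varrho\| \cA x \|_{\cX_0} + C_\varrho\| x \|_{\cX_0})$ for every $\varrho > 0$, so $\cB_s$ is $\cA$-bounded with arbitrarily small relative bound once $\delta$ is small. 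By a standard perturbation theorem for maximal regularity (see, e.g.,~\cite{Kunstmann_Weis} or~\cite{PS2016}), $\cA + \cB_s$ then inherits the maximal $\L^p_\mu$-regularity of $\cA$ on every finite interval; since in addition $0 \in \rho(\cA + \cB_s)$, the operator $\cA + \cB_s$ is $\cR$-sectorial of angle $< \pi/2$ and generates an exponentially stable analytic semigroup, hence has maximal $\L^p_\mu$-regularity on $(0,\infty)$ on $\cX_0$.

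The main obstacle is the componentwise estimate for $\cB_s$: one must recognise that $\cB_s$, although it contains the second-order term $u_s\Delta$, acts as a genuinely lower-order operator relative to $\cA$ once the optimal fractional-power descriptions~\eqref{Eq: Square roots} and Lemma~\ref{Lem: Embeddings fractional power domains} are exploited, and that its norm is controlled by $\delta$ because every coefficient of $\cB_s$ is, via Lemma~\ref{Eq: Solvability stationary Navier-Stokes}. The delicate term is $w_s\cdot\nabla x_1 = \divergence(w_s x_1)$, where $x_1$ only carries $\H^{2\eta,q}$-regularity: this is precisely why $\gamma$ has to be taken strictly above $\tfrac12$, so that $\H^{2\eta,q}(\Omega)$ embeds into some $\L^b(\Omega)$ with $b > q$ and can be paired by Hölder's inequality with $w_s \in \bigcap_{a<\infty}\L^a(\Omega;\IR^n)$. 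The hypotheses $q \in (\tfrac n2,\infty)$, $q<n$, subject to~\eqref{Eq: Condition on Lebesgue exponent}, guarantee that all Hölder and Sobolev exponents above are admissible; everything downstream — invertibility and the transfer of maximal regularity — is standard.
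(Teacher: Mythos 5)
Your argument is correct and reaches the same conclusion, but by a more roundabout route than the paper's. The paper estimates $\cB_s$ componentwise directly against the graph norm, obtaining $\|\cB_s \widehat U\|_{\cX_0} \le \varepsilon \|\cA \widehat U\|_{\cX_0}$ for $\widehat U \in \cX_1$ with $\varepsilon \to 0$ as $\delta \to 0$ and \emph{no lower-order term}; it does so by exploiting invertibility of the diagonal entries (e.g.\ $\|\widehat u\|_{\L^{q^*}} \lesssim \|\Delta \widehat u\|_{\W^{-1,q}_{\mathrm{av}}}$, and $\|(\Id-\Delta_{q^*})\widehat v\|_{\L^{q^*}} \lesssim \|(\Id-\Delta_{q^*})\widehat v - \widehat u\|_{\L^{q^*}} + \|\Delta\widehat u\|_{\W^{-1,q}_{\mathrm{av}}}$, which tracks the off-diagonal structure of $\cA$), after which~\cite[Prop.~4.2]{DHP} directly yields $\cR$-sectoriality, invertibility and maximal $\L^p_\mu$-regularity of $\cA+\cB_s$ on $(0,\infty)$ in one stroke. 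You instead bound $\cB_s\colon \cX_\gamma \to \cX_0$ for $\gamma = \tfrac12 + \eta$ (which avoids the off-diagonal bookkeeping because $\cX_\gamma$ is a decoupled product), then interpolate and apply Young's inequality to get $\|\cB_s x\| \le \varepsilon\|\cA x\| + C_\varepsilon\|x\|_{\cX_0}$, and then handle invertibility (Neumann series) and the passage from finite intervals to $(0,\infty)$ as separate steps. The final implication ``maximal regularity on finite intervals plus $0 \in \rho(\cA+\cB_s)$ gives maximal regularity on $(0,\infty)$'' is true but stated loosely; it requires a shift argument for $\cR$-sectoriality near the origin and is not a one-line quote from the perturbation theorems you cite. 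The simplest fix, which also shows your route is essentially equivalent to the paper's, is to note that since $0 \in \rho(\cA)$ one has $\|x\|_{\cX_0} \le C\|\cA x\|_{\cX_0}$, so the lower-order term can be absorbed into the leading one; this collapses your estimate to $\|\cB_s x\|_{\cX_0} \le C\delta\|\cA x\|_{\cX_0}$, i.e.\ the paper's relative bound, after which the entire ``finite interval plus invertibility'' discussion becomes unnecessary. Your componentwise estimates themselves are fine (lean a bit more heavily on Lemma~\ref{Lem: Embeddings fractional power domains} than the paper, which invokes it only for the Stokes component), and the rest of the structure is sound.
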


\begin{proof}
Recall that an operator that has maximal $\L^p_{\mu}$-regularity on $(0 , \infty)$ is necessarily invertible. Thus, the invertibility as well as the maximal $\L^p_{\mu}$-regularity of $\cA$ follow from Proposition~\ref{prop:Amaxreg}. 

The maximal regularity property of $\cA + \cB_s$ is deduced 
by a relative bounded perturbation criterion~\cite[Prop.~4.2]{DHP}. 
More precisely, given $\eps > 0$ we verify that
\begin{align}
\label{Eq: Relative bounded}
    \|\cB_s \widehat{U}\|_{\cX_0} \leq \eps \|\cA \widehat{U}\|_{\cX_0} 
    \qquad (\widehat{U} \in \cX_1)
\end{align}
whenever $\delta$ is small enough. In particular, this criterion together with invertibility guarantees the invertibility of $\cA + \cB_s$ once~\eqref{Eq: Relative bounded} is verified. \par
Notice that Lemma~\ref{Eq: Solvability stationary Navier-Stokes} together with the assumption guarantees that for any $1 < r < \infty$
\begin{align*}
 \| w_s \|_{\L^r (\Omega)} \leq C \delta (1 + \delta) \quad \text{and} \quad \| \nabla w_s \|_{\L^n (\Omega)} \leq C \delta (1 + \delta).
\end{align*}
We will frequently use these estimates. For instance, since 
$\dom(\Delta) = \W^{1 , q} (\Omega) \cap \L^q_{\mathrm{av}} (\Omega) \hookrightarrow \L^{q^*}(\Omega)$, we find
\begin{align*}
    \|w_s \cdot \nabla \widehat{u}\|_{\W^{-1,q}_{\mathrm{av}}(\Omega)} 
    = \|\divergence(w_s \widehat{u})\|_{\W^{-1,q}_{\mathrm{av}}(\Omega)} 
    &\leq \|w_s\widehat{u}\|_{\L^q(\Omega)} \\
    &\leq C \|w_s\|_{\L^n(\Omega)} \| \widehat{u} \|_{\L^{q^*}(\Omega)} 
    \leq C \delta (1 + \delta) \| \Delta \widehat{u}\|_{\W^{-1,q}_{\mathrm{av}} (\Omega)}. 
\end{align*}
Similarly, since $\nabla (\Id - \Delta_{q^*})^{-1}$ is bounded on $\L^{q^*} (\Omega) \hookrightarrow \L^q (\Omega)$, we have
\begin{align*}
    \|u_s \Delta \widehat{v}\|_{\W^{-1,q}_{\mathrm{av}}(\Omega)} 
    \leq C \lvert u_s \rvert \| \nabla \widehat{v}\|_{\L^q(\Omega)} 
    &\leq C \delta \| (\Id - \Delta_{q^{*}}) \widehat{v}\|_{\L^{q^*} (\Omega)} \\
    &\leq C \delta \big( \| (\Id - \Delta_{q^{*}}) \widehat{v} - \widehat{u}\|_{\L^{q^*} (\Omega)} +   \| \Delta \widehat{u}\|_{\W^{-1,q}_{\mathrm{av}} (\Omega)} \big).
\end{align*}
Next, let $r > q^*$ satisfy~\eqref{Eq: Lipschitz condition on q}. The property~\eqref{Eq: Square roots} implies $\| \nabla \widehat{v} \|_{\L^r (\Omega)} \leq C \| (\Id - \Delta_{q^*})^{1/2} \widehat{v} \|_{\L^r (\Omega)}$. Moreover, since $q^* > n$ a Sobolev embedding yields $\dom((\Id - \Delta_{q^*})^{1/2}) = \W^{1 , q^*} (\Omega) \hookrightarrow \L^r (\Omega)$. Thus, if $r^{\prime}$ is such that $1 / r^{\prime} + 1 / r = 1 / q^*$, we conclude
\begin{align*}
    \|w_s\cdot\nabla\widehat{v} \|_{\L^{q^*} (\Omega)} 
    &\leq \|w_s\|_{\L^{r^{\prime}}(\Omega)} \|\nabla \widehat{v}\|_{\L^r(\Omega)} \\
    &\leq C \delta (1 + \delta) 
    \|(\Id -\Delta_{q^*})^{1/2}\widehat{v}\|_{\L^r(\Omega)} 
    \\
    &\leq C \delta (1 + \delta) \| (\Id - \Delta_{q^*})\widehat{v}\|_{\L^{q^*}(\Omega)} \\
    &\leq C \delta (1 + \delta) \big( \| (\Id - \Delta_{q^*})\widehat{v} - \widehat{u}\|_{\L^{q^*}(\Omega)} + \| \Delta \widehat{u}\|_{\W^{-1,q}_{\mathrm{av}} (\Omega)}
 \big).
\end{align*}
In order to estimate the remaining term, choose $q<\rho<q^*$ and 
$1/\rho^{\prime} + 1/\rho = 1/q$. Note that, by Lemma~\ref{Lem: Embeddings fractional power domains}, we have that $\dom(A) \hookrightarrow \W^{1 , q^*}_{\sigma} (\Omega)$, so that
\begin{align*}
    \|\IP[(\widehat{w}\cdot \nabla)w_s]\|_{\L^q(\Omega)} 
    &+ \|\IP[(w_s\cdot \nabla) \widehat{w}]\|_{\L^q(\Omega)} \\
    &\leq C \|\widehat{w}\|_{\L^{q^*}(\Omega)} \|\nabla w_s\|_{\L^n(\Omega)} 
    + \|w_s\|_{\L^{\rho^{\prime}}(\Omega)}\|\nabla\widehat{w}\|_{\L^{\rho}(\Omega)}\\
    &\leq C \delta (1 + \delta) \|A \widehat{w}\|_{\L^q(\Omega)} \\
    &\leq C \delta (1 + \delta) \big( \|A \widehat{w} - \IP[f \widehat{u}]\|_{\L^q(\Omega)} + \| f \|_{\L^n (\Omega)} \| \widehat{u} \|_{\L^{q^*} (\Omega)} \big) \\
    &\leq C \delta (1 + \delta) \big( \|A \widehat{w} - \IP[f \widehat{u}]\|_{\L^q(\Omega)} + \| f \|_{\L^n (\Omega)} \| \Delta \widehat{u} \|_{\W^{-1,q}_{\mathrm{av}} (\Omega)} \big).
\end{align*}
Choosing $\delta$ small enough yields~\eqref{Eq: Relative bounded}.
\end{proof}

\begin{remark}
An analysis of the proofs of Lemma~\ref{Eq: Stationary Navier-equations} and~\ref{Lem: Perturbation} shows that the following weaker smallness conditions would suffice:
\begin{align*}
 \lvert u_s \rvert \|f\|_{\W^{-1 , 2}(\Omega)} + \lvert u_s \rvert \leq \delta \quad \text{if } n = 2 \quad \text{or} \quad \lvert u_s \rvert \|f\|_{\L^{3/2}(\Omega)} + \lvert u_s \rvert \leq \delta \quad \text{if } n = 3.
\end{align*}
\end{remark}

\begin{proof}[Proof of Theorem~\ref{Thm: Existence/Stability}]
Let $U_s := (u_s,v_s,w_s)$ be a stationary solution constructed above. 
In the following, we will construct a global-in-time solution $\widehat{U}$ 
of~\eqref{Eq: Abstract mean value free equation}. The solution $U$ 
to~\eqref{Eq: KSNS} will then be given by $U := \widehat{U} + U_s$.

To show exponential stability of the stationary solution $U_s$, note that 
$V := \e^{\omega t} \widehat{U}$, $\omega > 0$, satisfies
\begin{align} 
\label{Eq: Syst V}
\left\{ 
    \begin{aligned}
    \partial_t V + (\cA - \omega \Id) V + \cB_s V &= \e^{-\omega t} \Phi(V,V) 
    && \text{for } t \in (0,\infty), \\
    V (0) &= \widehat{U}_0.
    \end{aligned} 
    \right.
\end{align}
Let $\delta > 0$ be small enough such that Lemma~\ref{Lem: Perturbation} is applicable. Choosing $\omega$ small enough, we see by Lemma~\ref{Lem: Perturbation} that 
$\cA-\omega \Id +\cB_s$ is still invertible and admits maximal $\L^p_{\mu}$-regularity 
on $\cX_0$ globally in time. We write~\eqref{Eq: Syst V} as the fixed point problem
\[
V(t)=\e^{-t({\mathcal{A}}-\omega\Id+\cB_s)} \widehat{U}_0 
+(\partial_t+{\mathcal{A}}-\omega\Id+\cB_s)^{-1}\Phi(V,V)(t), 
\quad t\in [0,\infty).
\]
Using the contraction mapping principle, Lemma~\ref{Lem: fixed point}~\eqref{i}, 
and Lemma~\ref{lem:bilinearestimates} it suffices to prove that
\[
 \|t\mapsto \e^{-t({\mathcal{A}}-\omega\Id+\cB_s)} 
 \widehat{U}_0 \|_{\IE_{p,\mu}^{\infty} (\cX_0,\cX_1)} 
 \quad \text{is small enough.}
\]
This, however, is a direct consequence of~\eqref{Eq: Maximal regularity estimate} 
and the maximal $\L^p_{\mu}$-regularity property of $\cA-\omega\Id+\cB_s$ 
on $(0,\infty)$ whenever $\widehat{U}_0$ is small enough in $(\cX_0 , \cX_1)_{\mu - 1/p , p}$. As in the proof of Theorem~\ref{thm:localex} this interpolation space can be computed as
\begin{align*}
 (\cX_0 , \cX_1)_{\mu - 1/p , p} = \{ h \in \B^{n/q-2}_{q , p , 0}(\Omega) : \langle h , 1 \rangle = 0 \} \times \B^{n/q^*}_{q^* , p}(\Omega)
 \times \B^{n/q-1}_{q ,p, 0,\sigma}(\Omega).
\end{align*}

Since
\[
    \IE_{p,\mu}^{\infty} (\cX_0,\cX_1) \hookrightarrow 
    \mathrm{BUC}([0,\infty);(\cX_0,\cX_1)_{\mu-1/p,p})
\]
we have that $\sup_{0<t<\infty}\|V(t)\|_{(\cX_0,\cX_1)_{\mu-1/p,p}}<\infty$ 
and thus there exists a constant $C>0$ such that for all $t>0$
\[
    \|U(t)-U_s\|_{(\cX_0,\cX_1)_{\mu-1/p,p}} 
    = \e^{-\omega t}\|V(t)\|_{(\cX_0,\cX_1)_{\mu-1/p,p}} 
    \leq C \e^{-\omega t}.
\]
The desired number $\lambda$ could now be chosen as $\lambda := \omega / 2$.
\end{proof}

\section{Positivity}
\label{Sec: Positivity}

\noindent The positivity of the population density will follow from the following auxiliary result for a linear parabolic system subject to homogeneous Neumann boundary conditions. We will say that the normal derivative $\partial_{\nu} v$ of a function $v \in \H^1 (\Omega)$ vanishes on $\partial \Omega$ if
\begin{align*}
 \int_{\Omega} \nabla v \cdot \psi \, \d x = - \int_{\Omega} v \divergence(\psi) \, \d x = 0 \qquad (\psi \in \H^1 (\Omega)).
\end{align*}

\begin{proposition}
\label{Prop: Positivity}
Let $w \in \L^2(0 , T ; \L^2_{\sigma} (\Omega))$ and $v \in \L^2(0 , T ; \H^1 (\Omega))$ 
satisfy $\partial_{\nu} v(t) = 0$ on $\partial \Omega$ for almost every $t \in (0 , T)$. 
Let $u \in \W^{1 , 1} (0 , T ; \L^1 (\Omega)) \cap \L^2 (0 , T ; \H^1 (\Omega))$ 
and $u_0 \in \L^1 (\Omega)$ with $u_0 \geq 0$ a.e.\@ in $\Omega$ satisfy
\begin{align}
\label{Eq: Positivity equation<}
    \left\{ \begin{aligned}
        \partial_t u - \Delta u + w \cdot \nabla u + \divergence(u \nabla v) &= 0 
        && \text{in } (0 , T) \times \Omega, \\
        \partial_{\nu} u &= 0 && \text{on } (0 , T) \times \partial \Omega, \\
        u(0) &= u_0 && \text{in } \Omega.
    \end{aligned} \right.
\end{align}
Then $u (t) \geq 0$ a.e.\@ in $\Omega$ and almost every $t \in (0 , T)$.
\end{proposition}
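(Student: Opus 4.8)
The plan is to run a Stampacchia--type energy argument on the negative part $u^-:=\max(-u,0)$: show that $t\mapsto\|u^-(t)\|_{\L^2(\Omega)}^2$ starts at $0$ (since $u_0\ge 0$), satisfies a linear Grönwall inequality, and hence stays $0$. Because $u$ carries only the low regularity $u\in\W^{1,1}(0,T;\L^1(\Omega))\cap\L^2(0,T;\H^1(\Omega))$, I would first regularise: fix a standard convex $C^2$ approximation $\beta_\eps$ of $s\mapsto\tfrac12(s^-)^2$, with $\beta_\eps\equiv 0$ on $[0,\infty)$, $\beta_\eps'$ bounded and Lipschitz for each $\eps$, $0\le\beta_\eps''\le 1$, $\beta_\eps\uparrow\tfrac12(s^-)^2$ and $\beta_\eps''\to\ind_{\{s<0\}}$ a.e.\ as $\eps\downarrow 0$. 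Then $\beta_\eps'(u(t))\in\H^1(\Omega)$ for a.e.\ $t$, so it is an admissible test function in the weak formulation of~\eqref{Eq: Positivity equation<} (which makes sense since $\Delta u\in\L^2(0,T;\H^{-1}(\Omega))$ and $\divergence(uw),\divergence(u\nabla v)$ lie in $\L^1$ of a negative Sobolev space).

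Testing the equation against $\beta_\eps'(u)$, using that $t\mapsto\int_\Omega\beta_\eps(u(t))\,\d x$ is absolutely continuous by the chain rule for Bochner--Sobolev functions, and writing $w\cdot\nabla u\,\beta_\eps'(u)=w\cdot\nabla(B_\eps(u))$ with $B_\eps'=\beta_\eps'$, I get
\[
 \frac{\d}{\d t}\int_\Omega\beta_\eps(u)\,\d x = -\int_\Omega\beta_\eps''(u)|\nabla u|^2\,\d x - \int_\Omega w\cdot\nabla(B_\eps(u))\,\d x + \int_\Omega u\,\beta_\eps''(u)\,\nabla v\cdot\nabla u\,\d x .
\]
The middle term vanishes: $B_\eps(u(t))\in\H^1(\Omega)$ and $w(t)\in\L^2_{\sigma}(\Omega)$ is solenoidal with vanishing normal trace, so $\int_\Omega w\cdot\nabla\varphi\,\d x=0$ for every $\varphi\in\H^1(\Omega)$. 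Letting $\eps\downarrow 0$ by dominated convergence (using $0\le\beta_\eps''\le 1$ and $\nabla u=0$ a.e.\ on $\{u=0\}$) and observing that $\int_\Omega u\,\beta_\eps''(u)\,\nabla v\cdot\nabla u\to\int_{\{u<0\}}u\,\nabla v\cdot\nabla u=\int_\Omega u^-\,\nabla v\cdot\nabla u^-\,\d x$, I arrive at the energy identity
\[
 \tfrac12\frac{\d}{\d t}\|u^-\|_{\L^2(\Omega)}^2 + \|\nabla u^-\|_{\L^2(\Omega)}^2 = \int_\Omega u^-\,\nabla v\cdot\nabla u^-\,\d x = \tfrac12\int_\Omega\nabla v\cdot\nabla\big((u^-)^2\big)\,\d x .
\]

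The remaining chemotaxis term is the heart of the matter. I would integrate by parts once more using $\partial_{\nu}v=0$ (in the weak sense fixed before the Proposition), rewriting it as $-\tfrac12\langle\Delta v,(u^-)^2\rangle$, and then combine Hölder's inequality with the Sobolev embedding $\H^1(\Omega)\hookrightarrow\L^{2^*}(\Omega)$ and a Gagliardo--Nirenberg interpolation to obtain, for a suitable exponent $s$,
\[
 \Big|\int_\Omega u^-\,\nabla v\cdot\nabla u^-\,\d x\Big| \le \tfrac12\|\nabla u^-\|_{\L^2(\Omega)}^2 + C\,a(t)\,\|u^-\|_{\L^2(\Omega)}^2 ,
\]
where $a(t)$ is a fixed power of $\|\nabla v(t)\|_{\L^s(\Omega)}$ (equivalently of $\|\Delta v(t)\|_{\L^{s/2}(\Omega)}$). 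Absorbing $\tfrac12\|\nabla u^-\|_{\L^2(\Omega)}^2$ into the left-hand side leaves $\tfrac12\frac{\d}{\d t}\|u^-\|_{\L^2(\Omega)}^2\le C\,a(t)\,\|u^-\|_{\L^2(\Omega)}^2$, so Grönwall's lemma together with $\|u^-(0)\|_{\L^2(\Omega)}=\|u_0^-\|_{\L^2(\Omega)}=0$ forces $u^-\equiv 0$, i.e.\ $u(t)\ge 0$ a.e. The only genuinely delicate ingredient is that one needs $a\in\L^1(0,T)$, i.e.\ slightly more spatial integrability of $\nabla v$ than bare $\L^2(0,T;\H^1(\Omega))$ provides; in the regime where the Proposition is applied (Theorem~\ref{Thm: Boundedness and positivity}) the chemical concentration lies in $\L^r(0,\infty;\dom(\Delta_{q^*}))$ with $q^*>n$, hence $v(t)\in\W^{2,q^*}(\Omega)\hookrightarrow\C^1(\overline{\Omega})$, which is far more than enough.

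I expect this chemotaxis term to be the main obstacle, and I would note one alternative route for it: exploit the conservative form $\nabla u-u\nabla v=\e^{v}\nabla(\e^{-v}u)$, so that testing the equation against $G(\e^{-v}u)$ with $G(s)=\min(s,0)$ makes the diffusion and chemotaxis contributions cancel exactly into a coercive term $-\int_\Omega\e^{v}\,\ind_{\{z<0\}}|\nabla z|^2\,\d x$, $z:=\e^{-v}u$, at the price of a term $\tfrac12\int_\Omega(\partial_t v)\,\e^{v}(z^-)^2\,\d x$ coming from the time derivative; this again is controlled in the smooth regime. Either way, once the chemotaxis term is absorbed the conclusion is immediate from Grönwall's lemma and the nonnegativity of $u_0$.
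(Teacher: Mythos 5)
Your Stampacchia/Gr\"onwall scheme is a genuinely different route from the paper's, and it has a real gap that you flag yourself but do not close: under the \emph{stated} hypotheses ($v\in\L^2(0,T;\H^1(\Omega))$, $w\in\L^2(0,T;\L^2_{\sigma}(\Omega))$) the Gr\"onwall weight $a(t)$ is not in $\L^1(0,T)$. Concretely, after the absorption
\[
\Big\lvert\int_\Omega u^-\,\nabla v\cdot\nabla u^-\,\d x\Big\rvert
\le\delta\|\nabla u^-\|_{\L^2(\Omega)}^2+C_\delta\|\nabla v\|_{\L^s(\Omega)}^{2/(1-n/s)}\|u^-\|_{\L^2(\Omega)}^2,\qquad s>n,
\]
the cofactor requires $\nabla v\in\L^{2/(1-n/s)}(0,T;\L^s(\Omega))$, which is strictly more space--time integrability than $\L^2(0,T;\L^2(\Omega))$. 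The remark that ``in the regime of application the regularity is far more than enough'' does not prove the Proposition with the hypotheses it carries; Theorem~\ref{Thm: Boundedness and positivity} intentionally invokes the Proposition as a black box at its low regularity level. The exponential-substitution variant you sketch has the analogous defect (it needs control of $\partial_t v$, and $v$ bounded), which you also concede.

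The paper avoids this entirely by \emph{not} running an $\L^2$-energy estimate. It tests against $\varphi_\eta(u)$, where $\varphi_\eta$ is the piecewise-linear truncation approximating $-\ind_{\{u<0\}}$, so that $u\varphi_\eta(u)\to u^-$ in $\L^1(\Omega)$ and $\varphi_\eta'(u)=\eta^{-1}\ind_{\{-\eta<u<0\}}$. The structural gain is that $\varphi_\eta'$ is supported where $\lvert u\rvert\le\eta$. After integrating by parts (with the same boundary vanishing you use, via $\partial_\nu v=0$ and $\nu\cdot w=0$), one reaches
\[
\int_\Omega u(\tau)\varphi_\eta(u(\tau))\,\d x+\frac{1}{\eta}\int_0^{\tau}\!\!\int_{\{-\eta<u<0\}}\lvert\nabla u\rvert^2
=\int_0^{\tau}\!\!\int_\Omega u\,\partial_t u\,\varphi_\eta'(u)
+\frac{1}{\eta}\int_0^{\tau}\!\!\int_{\{-\eta<u<0\}}u\,\nabla u\cdot(w+\nabla v),
\]
and bounding $\lvert u\rvert/\eta\le 1$ on the support plus one Young inequality absorbs the $\eta^{-1}\lvert\nabla u\rvert^2$ contribution and leaves $\int_{\{-\eta<u<0\}}\lvert\partial_t u\rvert+\tfrac{\eta}{4}\int_{\{-\eta<u<0\}}\lvert w+\nabla v\rvert^2$, both of which tend to $0$: the first by dominated convergence of $\partial_t u\in\L^1$ over the shrinking sets $\{-\eta<u<0\}$, the second thanks to the explicit $\eta$-factor and $\lvert w+\nabla v\rvert^2\in\L^1((0,T)\times\Omega)$ --- which is exactly the stated regularity. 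No Gr\"onwall, no interpolation, no extra integrability of $\nabla v$. The lesson is that the \emph{level} of the test function matters: a bounded truncation $\varphi_\eta$ whose derivative lives on a thin strip $\{-\eta<u<0\}$ closes at $\L^2\L^2$ regularity of $w$ and $\nabla v$, while your quadratic $\beta_\eps$ does not. To salvage your approach under the stated hypotheses you would have to switch to such an $\L^1$-type test function, i.e.\ reproduce the paper's argument.
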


\begin{proof}
We test the equation by an approximation of $-1$ on $\{ u < 0 \}$ 
and show that $u^-(t) := - \min(0 , u(t))$ vanishes a.e.\@ in $\Omega$. 
For this purpose, let $\eta > 0$ and define
\[
    \varphi_{\eta} (r) := \begin{cases} 0, &\text{if } r \geq 0, \\
     r / \eta, & \text{if } - \eta < r < 0, \\
    -1, &\text{if } r \leq - \eta.\end{cases}
\]
Then
\[
    u(t , \cdot) \varphi_{\eta} (u (t , \cdot)) \to u^{-} (t , \cdot) 
    \quad \text{as} \quad \eta \to 0 \quad \text{in} \quad \L^1 (\Omega)
\]
for almost every $0 < t < T$. In addition, we have
\[
    \nabla \varphi_{\eta} (u) = \varphi_{\eta}^{\prime} (u) \nabla u 
    = \frac{1}{\eta} 1\hspace{-2.2pt}\text{l}_{\{- \eta < u < 0\}} \nabla u.
\]
Now, test the equation~\eqref{Eq: Positivity equation<} by $\varphi_{\eta} (u)$
to obtain for $0 < \tau \leq T$
\begin{align*}
    0 &= \int_0^{\tau}\int_{\Omega}\partial_t u\varphi_{\eta}(u)\,\d x\,\d t 
    +\int_0^{\tau}\int_{\Omega}\nabla u\cdot\varphi_{\eta}^{\prime}(u)\nabla u\,\d x\,\d t
    +\int_0^{\tau}\int_{\Omega}\divergence(wu)\varphi_{\eta}(u)\,\d x\,\d t\\
    &\qquad +\int_0^{\tau}\int_{\Omega}\divergence(u\nabla v)\varphi_{\eta}(u)\,\d x\,\d t.
\end{align*}
Next, we employ the product rule in the first term and an integration by parts in the last two terms and use that $\partial_{\nu} v$ as well as $\nu \cdot w$ vanish on $\partial \Omega$ (since $w(t) \in \L^2_{\sigma} (\Omega)$) and thus arrive at
\begin{align*}
    0 &=\int_0^{\tau}\int_{\Omega}\partial_t\big( u\varphi_{\eta}(u)\big)\,\d x\,\d t - 
    \int_0^{\tau}\int_{\Omega}u\partial_t u\varphi_{\eta}^{\prime}(u)\,\d x\,\d t + 
    \frac{1}{\eta}\int_0^{\tau}\int_{\{-\eta<u<0\}}\lvert\nabla u\rvert^2\,\d x\,\d t\\
    &\qquad -\frac{1}{\eta}\int_0^{\tau}\int_{\{-\eta<u<0\}} 
    u\nabla u\cdot\big(w + \nabla v\big)\,\d x\,\d t.
\end{align*}
Since $u_0 \geq 0$, we have $\varphi_{\eta}(u_0)=0$. Thus, we see that
\[
    \int_0^{\tau}\int_{\Omega}\partial_t \big(u\varphi_{\eta}(u)\big)\,\d x\,\d t 
    =\int_{\Omega} u(\tau,x)\varphi_{\eta} (u(\tau,x))\,\d x.
\]
Rearranging the terms and using that $\lvert u/\eta\rvert\leq 1$ 
on $\{-\eta<u<0\}$ yields that
\begin{align*}
    &\int_{\Omega} u(\tau,x)\varphi_{\eta}(u(\tau,x))\,\d x 
    + \frac{1}{\eta}\int_0^{\tau}\int_{\{-\eta<u<0\}} \lvert\nabla u\rvert^2\,\d x\,\d t \\
    &\leq \int_0^{\tau}\int_{\{-\eta<u<0\}} \lvert\partial_t u\rvert\,\d x\,\d t + 
    \int_0^{\tau}\int_{\{-\eta<u<0\}}\lvert\nabla u\rvert
    \lvert w+\nabla v\rvert\,\d x\,\d t \\
    &\leq \int_0^{\tau}\int_{\{-\eta<u<0\}}\lvert\partial_t u\rvert\,\d x\,\d t 
    + \frac{1}{\eta}\int_0^{\tau}\int_{\{-\eta<u<0\}}\lvert\nabla u\rvert^2\,\d x\,\d t 
    + \frac{\eta}{4}\int_0^{\tau}\int_{\{-\eta<u<0\}}\lvert w+\nabla v\rvert^2\,\d x\,\d t.
\end{align*}
Ultimately, we get
\[
    \int_{\Omega} u(\tau,x) \varphi_{\eta}(u(\tau,x)) \,\d x 
    \leq \int_0^{\tau}\int_{\{-\eta<u<0\}}\lvert\partial_t u\rvert\,\d x\,\d t 
    + \frac{\eta}{4}\int_0^{\tau}\int_{\{-\eta<u<0\}}\lvert w+\nabla v\rvert^2\,\d x\,\d t,
\]
which shows as $\eta \to 0$ that $u^-(\tau,\cdot)=0$ almost everywhere in $\Omega$.
\end{proof}

\section{Regularity and boundedness}
\label{Sec: Regularity and boundedness}
\noindent In order to establish the boundedness of solutions to~\eqref{Eq: KSNS} for small 
initial data, we change the functional setting from $X_0$ to a space $Y_0$. 
Let $q \in (\frac{n}{2} , \infty)$ satisfy~\eqref{Eq: Condition on Lebesgue exponent} and define
\[
    Y_0 := \L^q(\Omega)\times\L^{q^{*}}(\Omega)\times\L^q_{\sigma}(\Omega) \quad \text{and} \quad \cY_0 := \L^q_{\mathrm{av}}(\Omega)\times\L^{q^{*}}(\Omega)\times\L^q_{\sigma}(\Omega).
\]
Recall that, since $q > n / 2$, we have that $q^* > n$. As before, $\cA$ denotes the 
operator matrix defined in~\eqref{Eq: Operator matrix} but is now endowed with the domain
\[
    Y_1 := \dom(\Delta_q) \times \dom(\Delta_{q^*}) \times \dom(A) \quad \text{and} \quad \cY_1 := \dom(\Delta_q) \cap \L^q_{\mathrm{av}} (\Omega) \times \dom(\Delta_{q^*}) \times \dom(A).
\]
Moreover, for $1 <r<\infty$, we define the corresponding maximal regularity space by
\[
    \IE_{r,1}^{\infty}(\cY_0,\cY_1):= \W^{1,r} (0,\infty;\cY_0)\cap\L^r(0,\infty;\cY_1).
\]
The boundedness in space and time will follow by the trace theorem, whenever we know 
that the trace space $(\cY_0,\cY_1)_{1-1/r,r}$ continuously embeds into $\L^{\infty}$. 
This is guaranteed by the following result.

\begin{lemma}
\label{Lem: Embedding Linfty}
Let $q \in (\frac{n}{2} , \infty)$ satisfy~\eqref{Eq: Condition on Lebesgue exponent}. There exists $r_0 \in (1 , \infty)$ such that for all $r > r_0$ we have
\[
 \big(\cY_0,\cY_1\big)_{1-1/r,r} \hookrightarrow 
 \L^{\infty}(\Omega)\times\L^{\infty}(\Omega)\times\L^{\infty}(\Omega;\IR^n)
\]
and
\[
 \big(\cY_0,\cY_1\big)_{1-1/r,r} \hookrightarrow 
 \L^n(\Omega)\times\W^{1,n}(\Omega)\times\L^{\infty}(\Omega;\IR^n).
\]
\end{lemma}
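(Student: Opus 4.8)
The plan is to establish each of the two embeddings by analysing the three components of the interpolation space $(\cY_0,\cY_1)_{1-1/r,r}$ separately, since the operator matrix $\cA$ is lower triangular and each diagonal block admits a bounded $\H^\infty$-calculus, so the interpolation space splits as a product of the corresponding scalar trace spaces:
\[
 (\cY_0,\cY_1)_{1-1/r,r} = (\L^q_{\mathrm{av}}(\Omega),\dom(\Delta_q)\cap\L^q_{\mathrm{av}})_{1-1/r,r} \times (\L^{q^*}(\Omega),\dom(\Delta_{q^*}))_{1-1/r,r} \times (\L^q_\sigma(\Omega),\dom(A))_{1-1/r,r}.
\]
For the first and second factor I would use that $\dom((-\Delta_{q})^{1/2})=\W^{1,q}(\Omega)$ and $\dom((-\Delta_{q^*})^{1/2})=\W^{1,q^*}(\Omega)$ by~\eqref{Eq: Square roots}, together with complex interpolation and the reiteration theorem, to see that $(\L^q,\dom(\Delta_q))_{1-1/r,r}\hookrightarrow\dom((-\Delta_q)^{\theta})$ for any $\theta<1-1/r$, i.e.\ a Bessel potential space $\H^{2\theta,q}(\Omega)$ that embeds into $\B^{2(1-1/r)-\epsilon}_{q,r}$-type spaces; for the Stokes block I would argue analogously with $\dom(A^{1/2})=\W^{1,q}_{0,\sigma}(\Omega)$.

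Next I would invoke Sobolev embedding on each factor. Since $q>n/2$ we have $q^*>n$, so for $r$ sufficiently large the trace space of the $v$-component embeds into $\W^{1,n}(\Omega)$ (for the second claimed embedding) and into $\L^\infty(\Omega)$ (for the first), using $\W^{1,q^*}(\Omega)\hookrightarrow\L^\infty(\Omega)$ when $q^*>n$ and interpolating with $\dom(\Delta_{q^*})$ for a bit of extra regularity. For the $u$-component we only have $q$ at our disposal, but again $q>n/2$, so the trace space sits in a fractional Sobolev space $\H^{s,q}(\Omega)$ with $s$ close to $2$ as $r\to\infty$; since $s>n/q$ for $r$ large this yields $\L^\infty$ for the first embedding, while for the second embedding one only needs $\L^n$, which follows already for moderate $r$ because $\H^{s,q}\hookrightarrow\L^n$ as soon as $s\ge n(1/q-1/n)$. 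For the Stokes component one uses $\dom(A)\hookrightarrow\L^\infty(\Omega;\IR^n)$, which holds in bounded Lipschitz domains by~\cite[Thm.~2.12]{Brown_Shen_1995} (and~\cite[Thm.~10.6.2]{Mitrea_Wright} when $n=2$) — exactly the embedding already exploited in Lemma~\ref{Eq: Solvability stationary Navier-Stokes} — so after interpolating, the $w$-component trace space embeds into $\L^\infty$ for all $r$ large enough. Collecting the three factors and choosing $r_0$ to be the largest of the finitely many thresholds arising gives both displayed embeddings for all $r>r_0$.

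The main obstacle I anticipate is the $u$-component for the \emph{first} embedding: since $\dom(\Delta_q)$ is genuinely smaller than $\H^{2,q}(\Omega)$ in a Lipschitz domain and is not explicitly described, one cannot simply invoke the classical Sobolev embedding $\H^{2,q}\hookrightarrow\L^\infty$. The correct route is to stay within the scale of fractional power domains: by the bounded $\H^\infty$-calculus one has $(\L^q,\dom(\Delta_q))_{1-1/r,r}\hookrightarrow\dom((-\Delta_q)^{\theta})=\H^{2\theta,q}(\Omega)$ for any $\theta<1-1/r$ with $2\theta<3/2$ (staying below the Sobolev regularity threshold~\eqref{Eq: Lipschitz condition on q} where the square-root characterisation and interpolation remain valid), and then $\H^{2\theta,q}(\Omega)\hookrightarrow\L^\infty(\Omega)$ precisely when $2\theta>n/q$, which is compatible with $2\theta<3/2$ because $q>n/2$ forces $n/q<2$ — and in fact one needs $n/q<3/2$, i.e.\ $q>2n/3$, which is implied by~\eqref{Eq: Condition on Lebesgue exponent} for $n\le 3$ once $\epsilon$ is small. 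One then picks $r_0$ large enough that $1-1/r_0$ exceeds the required $\theta$. The analogous but easier bookkeeping for $v$ and $w$ completes the argument.
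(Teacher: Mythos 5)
Your componentwise splitting of the interpolation space is fine, and the $v$- and $w$-components, as well as the second displayed embedding, are treated essentially correctly. The gap is in the $u$-component of the \emph{first} embedding, and it is not a fixable detail but a genuine failure of the route you propose.

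You try to reach $\L^{\infty}(\Omega)$ while staying in the $\L^q$ scale: embed $(\L^q,\dom(\Delta_q))_{1-1/r,r}$ into $\H^{2\theta,q}(\Omega)$ and invoke the Sobolev embedding, which needs $2\theta>n/q$. You correctly flag that in a Lipschitz domain the identification (or even just the embedding) of $\dom((-\Delta_q)^{\theta})$ into $\H^{2\theta,q}(\Omega)$ is only available up to some finite threshold — you quote $2\theta<3/2$, and in fact~\cite[Thm.~9.2]{Fabes_Mendez_Mitrea_1998} only gives $\dom(\Delta_q)\hookrightarrow\H^{1+\delta,q}(\Omega)$ for a possibly small $\delta>0$. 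To reconcile $2\theta>n/q$ with $2\theta<3/2$ you therefore need $n/q<3/2$, i.e.\ $q>2n/3$, and you assert this is implied by~\eqref{Eq: Condition on Lebesgue exponent} for $n\leq 3$ once $\eps$ is small. That assertion is false for $n=3$: condition~\eqref{Eq: Condition on Lebesgue exponent} pins $1/q$ near $(n+1)/(2n)=2/3$, so $q$ is near $3/2$ and hence $q<2=2n/3$. In that range $n/q$ is near $2$, strictly above any amount of $\L^q$-smoothness one can legitimately claim, so the Sobolev embedding into $\L^{\infty}$ is simply out of reach within the $\L^q$ scale. Your argument breaks precisely in the three-dimensional case, which is the case of principal interest.

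The paper sidesteps this by trading smoothness for integrability rather than accumulating derivatives in $\L^q$. Using~\eqref{Eq: Square roots} in two stages — $(-\Delta_q)^{1/2}$ maps $\dom(\Delta_q)$ into $\W^{1,q}(\Omega)$, Sobolev embedding puts $(-\Delta_q)^{1/2}u$ in $\L^{q^*}(\Omega)$, and the square-root characterization on $\L^{q^*}$ then gives $u\in\dom((-\Delta_{q^*})^{1/2})=\W^{1,q^*}(\Omega)$ — one obtains $\dom(\Delta_q)\hookrightarrow\W^{1,q^*}(\Omega)$. This is exactly where the hypothesis $q>n/2$ does its work: it is equivalent to $q^*>n$, so the right endpoint $\W^{1,q^*}(\Omega)$ already embeds into $\L^{\infty}(\Omega)$. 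The paper then replaces $\cY_1$ by $Z=\W^{1,q^*}(\Omega)\times\W^{1,q^*}(\Omega)\times\W^{1,q^*}_{0,\sigma}(\Omega)$ and observes that $(\cY_0,Z)_{1-1/r,r}$ embeds into $\L^{\infty}$ for $r$ large, since the parameter $\theta=1-1/r$ approaches $1$. This integrability-boosting step is what your proposal is missing; once you incorporate it, your componentwise bookkeeping for the remaining pieces goes through along the paper's lines.
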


\begin{proof}
For the first embedding, notice that, as before, we have
\begin{align}
\label{Eq: Domains in first-order Sobolev space}
 \dom(\Delta_q)\times\dom(\Delta_{q^*})\times\dom(A)\hookrightarrow 
 \W^{1,q^*}(\Omega)\times\W^{1,q^*}(\Omega)\times\W^{1,q^*}_{0,\sigma}(\Omega)=:Z.
\end{align}
This implies the embedding $\big(\cY_0,\cY_1\big)_{1-1/r,r} 
\hookrightarrow \big(\cY_0,Z\big)_{1-1/r,r}$ so that, since $q^*>n$, we can choose 
$r$ large enough such that the claim follows by Sobolev embedding. 

Concerning the second embedding, notice that
\[
 \dom(\Delta_{q^*}) \hookrightarrow \H^{1+\delta,q^*} (\Omega)
\]
for some $\delta>0$, see~\cite[Thm.~9.2]{Fabes_Mendez_Mitrea_1998}. Using the 
same argument as before for the first and third component, we thus arrive at
\[
 \big(\cY_0,\cY_1 \big)_{1-1/r,r}\hookrightarrow 
 \L^n(\Omega)\times\W^{1,n}(\Omega)\times\L^{\infty}(\Omega;\IR^n). \qedhere
\]
\end{proof}

The following lemma provides an appropriate bilinear estimate in this functional setting.

\begin{lemma}
\label{Lem: Bilinear estimates bounded}
Let $q \in (\frac{n}{2} , \infty)$ satisfy~\eqref{Eq: Condition on Lebesgue exponent}. There exists $r_0 \in (1 , \infty)$ such that for all $r \in (r_0 , \infty)$ 
there exists $C > 0$ such that for all 
$U_1 , U_2 \in \IE_{r , 1}^{\infty} (\cY_0 , \cY_1)$ we have
\[
    \| \Phi(U_1 , U_2) \|_{\L^r(0 , \infty ; \cY_0)} \leq 
    C \| U_1 \|_{\IE_{r , 1}^{\infty} (\cY_0 , \cY_1)} 
    \| U_2 \|_{\IE_{r , 1}^{\infty} (\cY_0 , \cY_1)}.
\]
\end{lemma}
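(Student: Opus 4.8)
The plan is to mimic the structure of Lemma~\ref{lem:bilinearmap}, but now working directly in the $\L^q$-scale rather than the $\W^{-1,q}$-scale, exploiting the extra regularity encoded in $\cY_1$. The starting point is the embedding~\eqref{Eq: Domains in first-order Sobolev space}, which gives $\cY_1 \hookrightarrow Z := \W^{1,q^*}(\Omega)\times\W^{1,q^*}(\Omega)\times\W^{1,q^*}_{0,\sigma}(\Omega)$ with $q^*>n$. Hence for each fixed $t$, interpolating between $\cY_0$ and $Z$ and choosing $r$ large (this is where the threshold $r_0$ enters, just as in Lemma~\ref{Lem: Embedding Linfty}), we get pointwise-in-time estimates of the form $\|U(t)\|_{\W^{1,q^*}\times \W^{1,q^*}\times \W^{1,q^*}_{0,\sigma}} \lesssim \|U(t)\|_{\cY_0}^{1-\theta}\|U(t)\|_{\cY_1}^{\theta}$, and also, via Sobolev embedding, $\cY_1 \hookrightarrow \L^\infty\times\L^\infty\times\L^\infty$. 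So I would first record these two facts: the trace-type interpolation inequality and the $\L^\infty$-bound on the top space.

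Next I would estimate each component $\Phi_j(U_1,U_2)$ in the corresponding factor of $\cY_0$, using H\"older with the split $w_1\cdot\nabla$ (or the divergence form) combined with the above. For $\Phi_1 = -\divergence(w_1 u_2) - \divergence(u_1\nabla v_2)$ one now needs the full $\L^q$-norm (not $\W^{-1,q}$), so one gains a derivative: $\|\Phi_1\|_{\L^q}\lesssim \|\nabla(w_1u_2)\|_{\L^q} + \|\nabla(u_1\nabla v_2)\|_{\L^q}$, which after Leibniz splits into terms like $\|\nabla w_1\|_{\L^{q^*}}\|u_2\|_{\L^{\kappa}} + \|w_1\|_{\L^\infty}\|\nabla u_2\|_{\L^q}$ and $\|\nabla u_1\|_{\L^{q^*}}\|\nabla v_2\|_{\L^\kappa} + \|u_1\|_{\L^\infty}\|\Delta v_2\|_{\L^{q^*}}$ with $1/\kappa = 1/q - 1/q^* = 1/n$; all of these are controlled by products of $\|U_i\|_{\cY_0}$ and $\|U_i\|_{\cY_1}$ via the interpolation inequality, using $q^*>n$ so that $\W^{1,q^*}\hookrightarrow\L^\infty$. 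Similarly $\Phi_2 = -w_1\cdot\nabla v_2$ is estimated in $\L^{q^*}$ by $\|w_1\|_{\L^\infty}\|\nabla v_2\|_{\L^{q^*}}$, and $\Phi_3 = -\IP(w_1\cdot\nabla)w_2$ in $\L^q$ by $\|w_1\|_{\L^\infty}\|\nabla w_2\|_{\L^q}$, using boundedness of $\IP$ on $\L^q$.

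Finally I would integrate in time. Each of the pointwise bounds has the shape $\|\Phi(U_1,U_2)(t)\|_{\cY_0}\lesssim \big(\|U_1(t)\|_{\cY_0}^{1-\theta}\|U_1(t)\|_{\cY_1}^{\theta}\big)\big(\|U_2(t)\|_{\cY_0}^{1-\theta'}\|U_2(t)\|_{\cY_1}^{\theta'}\big)$, or a symmetric variant with the roles swapped; one wants this to be bounded in $\L^r_t$ by $\|U_1\|_{\IE_{r,1}^\infty}\|U_2\|_{\IE_{r,1}^\infty}$. Since $\IE_{r,1}^\infty(\cY_0,\cY_1)\hookrightarrow \BC([0,\infty);(\cY_0,\cY_1)_{1-1/r,r}) \cap \L^\infty(0,\infty;(\cY_0,\cY_1)_{1-1/r,r})$ and also $\hookrightarrow \L^r(0,\infty;\cY_1)$, the factor sitting in the trace space can be pulled out in $\L^\infty_t$ while the remaining $\cY_1$-factor is paired against $\L^r_t$; one must check that the total power of the $\cY_1$-norms summed across $U_1$ and $U_2$ is at most $1$ so that after absorbing the $\L^\infty_t$ pieces a single $\L^r_t$ norm remains, and also that one never needs $\L^r_t$ of a product of two $\cY_1$-factors. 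This bookkeeping of H\"older exponents in time — arranging that each product is ``$\L^\infty_t \cdot \L^r_t$'' rather than ``$\L^{2r}_t \cdot \L^{2r}_t$'' — is the main point to get right, and it is precisely why $r$ must be taken large; the spatial estimates themselves are routine once $q^*>n$ is used to land the top space in $\L^\infty$.
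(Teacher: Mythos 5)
Your proposal is essentially the paper's proof, and it is correct in its main thrust: estimate $\Phi$ spatially so that each term carries one factor in the ``intermediate'' space $\L^n\times\W^{1,n}\times\L^\infty$ and one factor in $\cY_1$, then in time pair $\L^\infty_t$ against $\L^r_t$ using the maximal-regularity trace embedding $\IE_{r,1}^\infty(\cY_0,\cY_1)\hookrightarrow\mathrm{BUC}([0,\infty);(\cY_0,\cY_1)_{1-1/r,r})$ together with Lemma~\ref{Lem: Embedding Linfty} to push the trace space into $\L^n\times\W^{1,n}\times\L^\infty$ for $r$ large. Two remarks. First, the pointwise interpolation inequality you propose as a starting point, $\|U(t)\|_Z\lesssim\|U(t)\|_{\cY_0}^{1-\theta}\|U(t)\|_{\cY_1}^\theta$ with $Z=\W^{1,q^*}\times\W^{1,q^*}\times\W^{1,q^*}_{0,\sigma}$, is not coherent as stated (since $Z$ is the target of the full embedding $\cY_1\hookrightarrow Z$, not an intermediate space) and, more to the point, is not what is actually used: the quantity you control pointwise in time is $\|U(t)\|_{(\cY_0,\cY_1)_{1-1/r,r}}$, and the relevant fact is only that this trace space embeds into $\L^n\times\W^{1,n}\times\L^\infty$. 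Second, your closing hedge about verifying ``the total power of the $\cY_1$-norms is at most $1$'' is unnecessary: inspecting your own spatial estimates, every term already has exactly one factor bounded by $\|U_i\|_{\cY_1}$ and one factor bounded by $\|U_j\|_{\L^n\times\W^{1,n}\times\L^\infty}$, so the $\L^\infty_t\cdot\L^r_t$ H\"older split is immediate. A small simplification the paper makes and you do not: since $w_1$ is divergence-free, $\divergence(w_1u_2)=w_1\cdot\nabla u_2$, so the Leibniz term $(\nabla w_1)u_2$ never appears and one estimates $\|w_1\|_{\L^n}\|\nabla u_2\|_{\L^{q^*}}$ directly (your extra term is still controllable, just superfluous).
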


\begin{proof}
The four terms are estimated as follows
\[
\|w_1 \cdot \nabla u_2\|_{\L^q(\Omega)} 
\leq \|w_1\|_{\L^n(\Omega)} \|\nabla u_2\|_{\L^{q^*}(\Omega)}
\leq C \|w_1\|_{\L^n(\Omega)} \|u_2\|_{\dom(\Delta_q)}.
\]
For the second one, we estimate
\begin{align*}
    \| \divergence(u_1 \nabla v_2)\|_{\L^q(\Omega)} 
    &\leq \|u_1 \Delta v_2\|_{\L^q (\Omega)} 
    + \|\nabla u_1 \cdot \nabla v_2\|_{\L^q (\Omega)} \\
    &\leq \|u_1\|_{\L^n(\Omega)} \| \Delta v_2 \|_{\L^{q^*}(\Omega)} 
    + \| \nabla u_1 \|_{\L^{q^*} (\Omega)} \|\nabla v_2\|_{\L^n(\Omega)} \\
    &\leq C \Big(\|u_1\|_{\L^n (\Omega)} \|v_2\|_{\dom(\Delta_{q^*})} 
    + \|u_1\|_{\dom(\Delta_q)} \|\nabla v_2\|_{\L^n(\Omega)}\Big).
\end{align*}
The third and fourth terms are treated similarly, resulting in the estimates
\[
    \|w_1 \cdot \nabla v_2\|_{\L^{q^*}(\Omega)} 
    \leq C \|w_1\|_{\L^{\infty}(\Omega)} \|v_2\|_{\dom(\Delta_{q^*})} 
\]
and
\[
    \|\IP[(w_1 \cdot \nabla)w_2 ]\|_{\L^q(\Omega)}
    \leq C \|w_1\|_{\L^n(\Omega)}\|w_2\|_{\dom(A)}.
\]
Summarizing we obtain the space estimate
\[
    \|\Phi(U_1,U_2)\|_{\cY_0} 
    \leq C \Big(\|U_1\|_{\L^n\times \W^{1,n}\times \L^{\infty}}\|U_2\|_{\cY_1} 
    + \|U_1\|_{\cY_1}\|U_2\|_{\L^n\times \W^{1,n} \times \L^{\infty}}\Big).
\]
We proceed with the estimates in time and use that
\[
    \IE_{r,1}^{\infty} (\cY_0,\cY_1) \hookrightarrow 
    \mathrm{BUC}([0,\infty);(\cY_0,\cY_1)_{1-\frac{1}{r},r})
\]
and, by virtue of Lemma~\ref{Lem: Embedding Linfty}, we choose $r_0$ large enough 
such that for $r > r_0$ we have
\[
    (\cY_0,\cY_1)_{1-\frac{1}{r},r} 
    \hookrightarrow \L^n(\Omega)\times \W^{1,n}(\Omega) \times \L^{\infty}(\Omega). \qedhere
\]
\end{proof}

\begin{lemma}
Let $r \in (1 , \infty)$ and $q \in (\frac{n}{2} , \infty)$ be subject to~\eqref{Eq: Condition on Lebesgue exponent}. There exists $\delta > 0$ such that for all $f \in \L^n (\Omega ; \IR^n)$ 
satisfying 
\begin{align*}
 \lvert u_s \rvert \|f\|_{\L^2(\Omega)}+\lvert u_s \rvert \leq \delta
\end{align*}
the operator $\cA+\cB_s$ has maximal $\L^p_{\mu}$-regularity on $(0,\infty)$ 
on the ground space $\cY_0$. In particular, $0 \in \rho(\cA+\cB_s)$.
\end{lemma}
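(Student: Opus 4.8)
The plan is to follow the two‑step pattern already used for $\cA$ on $\cX_0$ --- namely Proposition~\ref{prop:Amaxreg} together with Lemma~\ref{Lem: Perturbation} --- the only structural change being that the first slot of the ground space is now $\L^q_{\mathrm{av}}(\Omega)$ rather than $\W^{-1,q}_{\mathrm{av}}(\Omega)$. First I would prove that $(\cA,\cY_1)$ has maximal $\L^r$-regularity on $(0,\infty)$ with respect to $\cY_0$ and that $\cA$ is invertible; by~\cite[Thm.~3.5.4]{PS2016} one may pass to the case $\mu=1$ and to homogeneous initial data. The Neumann Laplacian $\Delta_q$ restricted to $\L^q_{\mathrm{av}}(\Omega)$ is invertible and its semigroup satisfies heat‑kernel bounds, so $-\Delta_q$ has maximal $\L^r$-regularity on $(0,\infty)$ by~\cite[Thm.~3.1]{Hieber_Pruess}; the same holds for $\Id-\Delta_{q^*}$ on $\L^{q^*}(\Omega)$ and for the Stokes operator $A$ on $\L^q_\sigma(\Omega)$, see~\cite[Thm.~16]{Kunstmann_Weis}. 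Since $\cA$ is block lower triangular with these invertible generators on the diagonal it is invertible (by back‑substitution), and given $g=(g_1,g_2,g_3)\in\L^r(0,\infty;\cY_0)$ one solves $\partial_t\widehat U+\cA\widehat U=g$, $\widehat U(0)=0$, row by row exactly as in the proof of Proposition~\ref{prop:Amaxreg}: solve the $u$-row with right‑hand side $g_1$; use $\dom(\Delta_q)\hookrightarrow\W^{1,q^*}(\Omega)\hookrightarrow\L^{q^*}(\Omega)$ from~\eqref{Eq: Domains in first-order Sobolev space} to feed $u$ into the $v$-row with right‑hand side $g_2+u$; finally observe that $f\in\L^n(\Omega;\IR^n)$ and H\"older's inequality give $\IP(uf)\in\L^r(0,\infty;\L^q_\sigma(\Omega))$, so that the Stokes row with right‑hand side $g_3+\IP(uf)$ is solved by maximal $\L^r$-regularity of $A$.

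For the perturbation I would invoke the relatively bounded perturbation criterion~\cite[Prop.~4.2]{DHP}: it is enough to show that for every $\eta>0$ there is $\delta>0$ such that $\|\cB_s\widehat U\|_{\cY_0}\leq\eta\|\cA\widehat U\|_{\cY_0}$ for all $\widehat U\in\cY_1$; combined with the invertibility of $\cA$ this also yields $0\in\rho(\cA+\cB_s)$. Lemma~\ref{Eq: Solvability stationary Navier-Stokes} and the smallness hypothesis give $\|w_s\|_{\L^\rho(\Omega)}+\|\nabla w_s\|_{\L^n(\Omega)}\leq C\delta(1+\delta)$ for every finite $\rho$. I would then estimate the six entries of $\cB_s\widehat U$ in $\L^q_{\mathrm{av}}(\Omega)\times\L^{q^*}(\Omega)\times\L^q_\sigma(\Omega)$ essentially verbatim as in Lemma~\ref{Lem: Perturbation}: H\"older's inequality together with the embeddings $\dom(\Delta_q)\hookrightarrow\W^{1,q^*}(\Omega)$ and $\dom(A)\hookrightarrow\W^{1,q^*}_{0,\sigma}(\Omega)$ from~\eqref{Eq: Domains in first-order Sobolev space}, the square‑root identity~\eqref{Eq: Square roots} to place $\nabla\widehat v$ in $\L^r$ for a suitable $r>q^*$ (thereby avoiding an $\L^\infty$-bound on $w_s$), and boundedness of $\nabla(\Id-\Delta_{q^*})^{-1}$ on $\L^{q^*}(\Omega)$. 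As in Lemma~\ref{Lem: Perturbation}, the contributions containing $\widehat u$ in the $v$- and $w$-rows, together with the term $\|u_s\Delta_{q^*}\widehat v\|$, are handled by adding and subtracting the matching entry of $\cA$ so that $\|\cA\widehat U\|_{\cY_0}$ appears on the right, the remaining lower‑order terms being harmless because $q^*>n$. Choosing $\delta$ so small that $C\delta(1+\delta)<\eta$ then completes the argument.

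The main obstacle is the bookkeeping in the second step: one has to verify that \emph{each} of the six terms produced by $\cB_s$ is dominated by $C\delta(1+\delta)\|\cA\widehat U\|_{\cY_0}$. This hinges on the optimal embeddings of the operator domains into first‑order Sobolev spaces --- available only because $q$ obeys the Lipschitz restriction~\eqref{Eq: Condition on Lebesgue exponent} and $q>n/2$, whence $q^*>n$ --- and on the fact that $\|\cA\widehat U\|_{\cY_0}$ controls $\|\Delta_q\widehat u\|_{\L^q(\Omega)}$ rather than a lower‑order norm of $\widehat u$, so every $\widehat u$-dependent term must be routed through an entry of $\cA$ and cannot be estimated directly.
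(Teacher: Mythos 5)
Your proposal is correct and follows the same two-step strategy as the paper: maximal $\L^p_{\mu}$-regularity of $\cA$ on the new ground space plus the relative-bound perturbation criterion from~\cite[Prop.~4.2]{DHP}. You are in fact a bit more scrupulous than the printed proof in one respect: the paper's argument simply refers back to Lemma~\ref{Lem: Perturbation}, which in turn cites Proposition~\ref{prop:Amaxreg}, but that proposition is stated only for $\cX_0$, not $\cY_0$; your explicit row-by-row back-substitution argument for $(\cA,\cY_1)$ on $\cY_0$ fills a gap the paper leaves implicit. On the other hand, ``estimate the six entries essentially verbatim as in Lemma~\ref{Lem: Perturbation}'' slightly understates the modification that is actually needed: because the first component of $\cY_0$ is $\L^q_{\mathrm{av}}(\Omega)$ rather than $\W^{-1,q}_{\mathrm{av}}(\Omega)$, the divergence-form trick $\|w_s\cdot\nabla\widehat u\|_{\W^{-1,q}_{\mathrm{av}}}=\|\divergence(w_s\widehat u)\|_{\W^{-1,q}_{\mathrm{av}}}\leq\|w_s\widehat u\|_{\L^q}$ is no longer available, and the paper's proof explicitly flags the two estimates for $w_s\cdot\nabla\widehat u$ and $u_s\Delta\widehat v$ as the ones that must be redone. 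The paper replaces them by $\|w_s\cdot\nabla\widehat u\|_{\L^q}\leq\|w_s\|_{\L^n}\|\nabla\widehat u\|_{\L^{q^*}}\lesssim\delta\|\Delta_q\widehat u\|_{\L^q}$ via the square-root identity~\eqref{Eq: Square roots} and a Sobolev--Poincar\'e inequality on the mean-value-free subspace, and by $\|u_s\Delta\widehat v\|_{\L^q}\lesssim\delta\|(\Id-\Delta_{q^*})\widehat v\|_{\L^{q^*}}$ using $\L^{q^*}\hookrightarrow\L^q$ and boundedness of $\Delta_{q^*}(\Id-\Delta_{q^*})^{-1}$. You do list the decisive ingredients (the embedding $\dom(\Delta_q)\hookrightarrow\W^{1,q^*}(\Omega)$ and the observation that every $\widehat u$-dependent term must be routed through $\|\Delta_q\widehat u\|_{\L^q}$), so the argument closes as intended; I would merely ask you to make explicit that these two first-row estimates are the genuinely new ones rather than verbatim copies.
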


\begin{proof}
We follow the proof of Lemma~\ref{Lem: Perturbation} and use the notation therein. The only difference is that the function space $\W^{-1 , q}_{\mathrm{av}} (\Omega)$ in the first component of $\cX_0$ is replaced by $\L^q_{\mathrm{av}} (\Omega)$. Thus, it is sufficient to replace the estimates for $w_s \cdot \nabla \widehat{u}$ and $u_s \Delta \widehat{v}$ by the following two estimates: By virtue~\eqref{Eq: Square roots}, a Sobolev-Poincar\'e inequality for mean-value free functions and Lemma~\ref{Eq: Solvability stationary Navier-Stokes}, we have  
\begin{align*}
    \|w_s \cdot \nabla \widehat{u}\|_{\L^q (\Omega)} &\leq \| w_s \|_{\L^n (\Omega)} \| \nabla \widehat{u} \|_{\L^{q^*} (\Omega)} \\
    &\leq C \| w_s \|_{\L^n (\Omega)} \| (- \Delta_{q^*})^{\frac{1}{2}} \widehat{u} \|_{\L^{q^*} (\Omega)} \\
    &\leq C \| w_s \|_{\L^n (\Omega)} \| \nabla (- \Delta_{q^*})^{\frac{1}{2}} \widehat{u} \|_{\L^q (\Omega)} \\
    &\leq C \delta (1 + \delta) \| \Delta_q \widehat{u}\|_{\L^q (\Omega)}. 
\end{align*}
Similarly, since $\Delta_{q^*} (\Id - \Delta_{q^*})^{-1}$ is bounded on $\L^{q^*} (\Omega) \hookrightarrow \L^q (\Omega)$, we have
\begin{align*}
    \|u_s \Delta \widehat{v}\|_{\L^q (\Omega)} 
    &\leq C \delta \| (\Id - \Delta_{q^{*}}) \widehat{v}\|_{\L^{q^*} (\Omega)} \leq C \delta \big( \| (\Id - \Delta_{q^{*}}) \widehat{v} - \widehat{u}\|_{\L^{q^*} (\Omega)} +   \| \Delta_q \widehat{u}\|_{\L^q (\Omega)} \big).
\end{align*}
Now, the proof is concluded as in Lemma~\ref{Lem: Perturbation}.
\end{proof}

\begin{proof}[Proof of Theorem~\ref{Thm: Boundedness and positivity}]
Mimicking the proof of Theorem~\ref{Thm: Existence/Stability}, but by using 
the bilinear estimate from Lemma~\ref{Lem: Bilinear estimates bounded} 
for $r > r_0$, there exists a stationary solution 
$U_s=(u_s,v_s,w_s)$ with
\[
 u_s=v_s=u_{0,\Omega}
\]
and $w_s\in \L^{\infty}(\Omega;\IR^n)$ (provided by 
Lemma~\ref{Eq: Solvability stationary Navier-Stokes}) and a global 
strong solution $(u,v,w)$ to~\eqref{Eq: KSNS} satisfying
\[
 \|t\mapsto \e^{\lambda t} [(u (t),v (t),w (t))-(u_s,v_s,w_s)]
 \|_{\IE_{r,1}^{\infty}(\cY_0,\cY_1)}\leq C
\]
for the given initial data $(u_0,v_0,w_0) \in (Y_0,Y_1)_{1-1/r,r}$. 
Choose $r\geq p$ for $p$ given in Theorem~\ref{Thm: Existence/Stability}. 
Then, for every $0<T<\infty$, we have
\[
 \IE_{r,1}^T(\cY_0,\cY_1)\hookrightarrow \IE_{p,\mu}^T(\cX_0,\cX_1).
\]
By uniqueness in $\IE_{p,\mu}^T(\cX_0,\cX_1)$, the constructed solution 
$(u,v,w)$ coincides with the solution provided by 
Theorem~\ref{Thm: Existence/Stability}. By Lemma~\ref{Lem: Embedding Linfty} 
and the trace theorem, we conclude that
\[
 (u,v,w) - (u_s , v_s , w_s) \in \L^{\infty}(0,\infty;\L^{\infty}(\Omega)\times\L^{\infty}(\Omega) 
 \times \L^{\infty}(\Omega ; \IR^n)).
\]
remains globally bounded in
space and time. Since $(u_s , v_s , w_s)$ is independent of $t$ and bounded in space, we conclude that $(u,v,w)$ is globally bounded in space and time. \par
For the rest of the proof, we assume that $u_0\geq 0$ and we show that $u$ 
remains positive. For this purpose, we may add another condition on $r$, 
namely, we assume that $r\geq 2$. For finite $T$, the property
\[
 u \in \W^{1,r}(0,T;\L^q(\Omega))
\]
implies that $u \in \W^{1,1}(0,T;\L^1(\Omega))$. 
Moreover,~\eqref{Eq: Domains in first-order Sobolev space} yields that
\[
Y_1\hookrightarrow \H^1(\Omega)\times\H^1(\Omega)\times\L^2_{\sigma}(\Omega)
\]
and thus
\[
 v\in\L^2(0,T;\H^1(\Omega))\quad\text{and}\quad w\in \L^2(0,T;\L^2_{\sigma}(\Omega)).
\]
The positivity of $u$ now follows from Proposition~\ref{Prop: Positivity}.
\end{proof}

\section{The Chemotaxis-Consumption Model}
\label{Sec: A variant of (KSNS)}

\noindent On a bounded Lipschitz domain $\Omega\subset\IR^n$, we study a variant of our 
initially considered Keller-Segel-Navier-Stokes system given 
by~\eqref{Eq: KSNS-variant}. As in Section~\ref{Sec: Preliminaries and Main Results}
we rewrite the system as an abstract semilinear system of the 
form~\eqref{Eq: abstract system} on the space
\[
 X_0:=\W^{-1,q}_0(\Omega)\times\L^{q^*}(\Omega)\times\L^q_{\sigma}(\Omega) 
 \quad \text{or on} \quad 
 \cX_0:=\W^{-1,q}_{\mathrm{av}}  (\Omega)
 \times\L^{q^{*}}_{\mathrm{av}}(\Omega)\times\L^q_{\sigma}(\Omega).
\]
Here, the operator $\cA$ is given by
\[
 \cA :=
    \begin{pmatrix}
     - \Delta & 0 & 0 \\ 
     0 & -\Delta_{q^*} & 0 \\
     - \IP[f \boldsymbol{\cdot}] & 0 & A
    \end{pmatrix}
\]
and equipped with the domain
\[
 X_1:=\W^{1,q}(\Omega)\times\dom(\Delta_{q^*})\times\dom(A) 
 \quad \text{or} \quad 
 \cX_1:=[\W^{1,q}(\Omega)\cap\L^q_{\mathrm{av}}(\Omega)] 
 \times [\dom(\Delta_{q^{*}}) \cap \L^{q^*}_{\mathrm{av}} (\Omega)]\times\dom(A).
\]
The nonlinearity is given by
\[
 \Phi (U_1,U_2) := \begin{pmatrix} 
    \Phi_1(U_1,U_2) \\ \Phi_2 (U_1,U_2) \\ \Phi_3(U_1,U_2) 
    \end{pmatrix} := \begin{pmatrix} 
    -w_1\cdot\nabla u_2-\divergence(u_1 \nabla v_2)\\
    -w_1 \cdot\nabla v_2-u_1 v_2\\
    -\IP(w_1\cdot\nabla)w_2 \end{pmatrix} 
    \cdotp
\]
Compared to the nonlinearity in~\eqref{eq:bilinearmap} we only need to estimate 
the term $-u_1 v_2$. Note that with the same notation as in 
Lemma~\ref{lem:bilinearmap} we have
\[
 \|u_1 v_2\|_{\L^{q^*}(\Omega)}
 \leq \|u_1\|_{\L^s(\Omega)}\|v_2\|_{\L^{r^*}(\Omega)} 
 \leq C \|u_1\|_{\L^s(\Omega)}\|v_2\|_{\W^{1,r}(\Omega)} 
 \leq \|U_1\|_{X_{\beta^{\prime}}}\|U_2\|_{X_{\beta}}.
\]
Thus, $\Phi$ satisfies the same estimates as in Lemma~\ref{lem:bilinearestimates}. 
Following the proof of Theorem~\ref{thm:localex} we find that for all initial data
\[
 (u_0,v_0,w_0) \in \B^{n/q-2}_{q,p,0}(\Omega)\times \B^{n/q^*}_{q^*,p}(\Omega)
 \times \B^{n/q-1}_{q,p,0,\sigma}(\Omega) 
\]
there exists a time $T > 0$ and a unique strong solution in 
$\IE_{p,\mu}^T (X_0,X_1)$. 

To construct global-in-time solutions we consider stationary solutions 
to~\eqref{Eq: KSNS-variant} of the form
\[
 U_s=(u_s,v_s,w_s) \quad \text{with} \quad u_s:=u_{0,\Omega}, \quad v_s=0
\]
and $w_s$ solves~\eqref{Eq: Stationary Navier-equations}. Mimicking the proof of 
Theorem~\ref{Thm: Existence/Stability} we find for all initial data close to the stationary solution $(u_s , 0 , w_s)$ in 
$\B^{n/q-2}_{q,p,0}(\Omega) \times \B^{n/q^*}_{q^*,p}(\Omega)
 \times \B^{n/q-1}_{q,p,0,\sigma}(\Omega)$ 
 a unique global strong solution to~\eqref{Eq: KSNS-variant} satisfying
\[
 \e^{\lambda t}
 \bigl\|(u (t),v (t),w (t))-(u_s,v_s,w_s)\bigr\|_{(\cX_0,\cX_1)_{\mu-1/p,p}} 
 \xrightarrow[t\to\infty]{} 0
\]
for some $\lambda > 0$. 

As in Section~\ref{Sec: Regularity and boundedness} the solution is globally 
bounded in space and time whenever the initial data in $(Y_0,Y_1)_{1-1/r,r}$ are sufficiently close to the stationary solution 
in $(\cY_0,\cY_1)_{1-1/r,r}$, where
\[
    Y_0:=\L^q(\Omega)\times\L^{q^{*}}(\Omega)\times\L^q_{\sigma}(\Omega) 
    \quad \text{and} \quad 
    \cY_0 := \L^q_{\mathrm{av}}(\Omega)\times\L^{q^{*}}_{\mathrm{av}} (\Omega)\times\L^q_{\sigma}(\Omega)
\]
and
\[
    Y_1:=\dom(\Delta_q)\times \dom(\Delta_{q^*})\times \dom(A) 
    \quad \text{and} \quad 
    \cY_1 := [\dom(\Delta_q) \cap \L^q_{\mathrm{av}}(\Omega)] \times [\dom(\Delta_{q^*}) \cap \L^{q^{*}}_{\mathrm{av}} (\Omega)] \times \dom(A)
\]
and $r\geq 2$ is sufficiently large. Since the first equation 
in~\eqref{Eq: KSNS-variant} is unchanged compared to~\eqref{Eq: KSNS}, 
the solution is positive, provided the initial datum $u_0$ is positive. 

Summing up, we arrive at the following theorem.

\begin{theorem}
\label{Thm: Variant}
Let all parameters be chosen as in Theorems~\ref{thm:localex}, \ref{Ass: Forcing},
\ref{Thm: Existence/Stability} and~\ref{Thm: Boundedness and positivity}. 
Then the following statements about~\eqref{Eq: KSNS-variant} are valid:
\begin{enumerate}
 \item (Local solvability) 
 For all initial data
 \[
(u_0,v_0,w_0) \in \B^{n/q-2}_{q,p,0}(\Omega)\times\B^{n/q^*}_{q^*,p}(\Omega)
 \times \B^{n/q-1}_{q,p,0,\sigma}(\Omega) 
\]
and all $f \in \L^n (\Omega ; \IR^n)$ there exists $T>0$ such that the Keller-Segel-Navier-Stokes system 
\eqref{Eq: KSNS-variant} admits a unique strong solution 
\begin{align*}
 (u,v,w) \in \IE_{p,\mu}^T(X_0,X_1), \quad \text{where} \quad \mu= \frac{n}{2q} + \frac{1}{p} - \frac{1}{2}.
\end{align*}
 \item \label{Item: Global/Stability} (Global solvability and stability) 
 There exist $\delta,\lambda,C>0$ such that for all $0 < \kappa < C$ and all
\[
(u_0,v_0,w_0) \in \B^{n/q-2}_{q,p,0}(\Omega)\times \B^{n/q^*}_{q^*,p}(\Omega)
 \times \B^{n/q-1}_{q,p,0,\sigma}(\Omega) \quad \text{with} \quad v_{0 , \Omega} = 0 
\]
and
\[
 \|u_0 - u_s\|_{\B^{n/q-2}_{q,p,0}(\Omega)}+\|v_0\|_{\B^{n/q^*}_{q^*,p}(\Omega)} 
 +\|w_0 - w_s\|_{\B^{n/q-1}_{q,p,0,\sigma}(\Omega)} \leq \kappa
\]
and all $f \in \L^n (\Omega ; \IR^n)$ satisfying Assumption~\ref{Ass: Forcing} 
for this $\delta$, there exists a unique global strong solution $(u,v,w)$ 
to~\eqref{Eq: KSNS-variant} satisfying
\[
 \|t\mapsto \e^{\lambda t}[(u(t),v(t),w(t))-
 (u_s,v_s,w_s)]\|_{\IE_{p,\mu}^{\infty}(\cX_0,\cX_1)}\leq 2\kappa,
\]
where $\mu=n/(2q)+1/p-1/2$. In particular,
\[
 \e^{\lambda t}\bigl\|(u(t),v(t),w(t))-
 (u_s, 0 ,w_s)\bigr\|_{(\cX_0,\cX_1)_{\mu-1/p,p}} \xrightarrow[t\to \infty]{} 0.
\]
 \item (Positivity and boundedness) 
 There exist $\delta,\lambda,C>0$ and $r_0>1$ such that for all $0 < \kappa < C$, all $r>r_0$ and all
\[
 (u_0,v_0,w_0)\in\big(Y_0,Y_1\big)_{1-1/r,r} 
 \quad \text{with} \quad v_{0 , \Omega} = 0
\]
and
\[
\|(u_0 - u_s,v_0,w_0 - w_s) \|_{(\cY_0,\cY_1)_{1-1/r,r}} \leq \kappa
\]
the unique solution constructed in~\eqref{Item: Global/Stability} satisfies
\[
 \|t\mapsto \e^{\lambda t}[(u (t),v(t),w(t))-
 (u_s, 0 ,w_s) ]\|_{\IE_{r,1}^{\infty}(\cY_0,\cY_1)} \leq 2\kappa.
\]
Moreover, it is globally bounded in space and time, i.e.,
\[
 (u,v,w)\in\L^{\infty}(0,\infty;\L^{\infty}(\Omega)\times\L^{\infty}(\Omega) 
 \times \L^{\infty}(\Omega;\IR^n)),
\]
and, if $u_0 \geq 0$ a.e., then $u \geq 0$ for all a.e.\@ $t>0$ and a.e.\@ $x\in\Omega$.
\end{enumerate}
\end{theorem}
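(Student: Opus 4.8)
\noindent The plan is to derive Theorem~\ref{Thm: Variant} by re-running the scheme of Sections~\ref{Sec: Proof of Local solvability}--\ref{Sec: Regularity and boundedness} for the operator matrix $\cA$ and the nonlinearity $\Phi$ displayed at the beginning of this section, isolating the few places where~\eqref{Eq: KSNS-variant} really differs from~\eqref{Eq: KSNS}. First I would record that $\cA$ differs from~\eqref{Eq: Operator matrix} only in its middle row, where the coupling $-\Id$ and the shift $+\Id$ are dropped. On $X_0$ this is harmless: the diagonal block $-\Delta_{q^*}$ still generates a bounded analytic semigroup satisfying Gaussian bounds, so the argument of Proposition~\ref{prop:Amaxreg} gives maximal $\L^p_\mu$-regularity on every finite interval $(0,T)$ unchanged. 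For the global statements I would pass to $\cX_0$, whose $v$-slot is the mean-value free space $\L^{q^*}_{\mathrm{av}}(\Omega)$; there $-\Delta_{q^*}$ has a spectral gap, hence is invertible with maximal $\L^p_\mu$-regularity on $(0,\infty)$, and since $\cA$ is block lower triangular with invertible diagonal blocks, the global part of Proposition~\ref{prop:Amaxreg} carries over. The same reasoning applies to the $\L^q$-scale $\cY_0$ used in Section~\ref{Sec: Regularity and boundedness}.

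\noindent Next I would verify that the nonlinear estimates survive. The only term in $\Phi$ not already controlled in Lemma~\ref{lem:bilinearmap} is $-u_1v_2$ in $\Phi_2$, and it is handled by H\"older's inequality together with a Sobolev embedding,
\[
 \|u_1 v_2\|_{\L^{q^*}(\Omega)} \le C\|u_1\|_{\L^s(\Omega)}\|v_2\|_{\W^{1,r}(\Omega)} \le C\|U_1\|_{X_{\beta'}}\|U_2\|_{X_\beta},
\]
with the very exponents occurring in Lemma~\ref{lem:bilinearmap}; in the $\L^q$-setting one uses instead $\|u_1v_2\|_{\L^{q^*}(\Omega)}\le\|u_1\|_{\L^\infty(\Omega)}\|v_2\|_{\L^{q^*}(\Omega)}$ and the embedding $\dom(\Delta_q)\hookrightarrow\L^\infty(\Omega)$. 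Hence Lemmas~\ref{lem:bilinearmap}, \ref{lem:bilinearestimates} and~\ref{Lem: Bilinear estimates bounded} remain valid --- the time-weighted Sobolev step being literally the same --- and so does the relative bounded perturbation estimate of Lemma~\ref{Lem: Perturbation}, the new zeroth-order term $u_s\widehat v$ being dominated by $|u_s|\,\|\widehat v\|_{\L^{q^*}(\Omega)}\le C\delta\|\Delta_{q^*}\widehat v\|_{\L^{q^*}(\Omega)}$ via the invertibility of $-\Delta_{q^*}$ on $\L^{q^*}_{\mathrm{av}}(\Omega)$. With maximal regularity and the bilinear bounds in hand, the contraction arguments of Theorems~\ref{thm:localex}, \ref{Thm: Existence/Stability} and~\ref{Thm: Boundedness and positivity} then apply line by line, now around the stationary solution $(u_s,0,w_s)$ with $u_s=u_{0,\Omega}$ and $w_s$ provided by Lemma~\ref{Eq: Solvability stationary Navier-Stokes}; the positivity of $u$ is unaffected and follows from Proposition~\ref{Prop: Positivity}, since the first equation of~\eqref{Eq: KSNS-variant} coincides with that of~\eqref{Eq: KSNS}.

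\noindent The hard part will be the $v$-component in the global-in-time statement: removing the damping $+v$ destroys exponential decay on $\L^{q^*}(\Omega)$, which forces one onto the mean-value free scale $\L^{q^*}_{\mathrm{av}}(\Omega)$ in order to recover a spectral gap --- this is exactly why the hypothesis $v_{0,\Omega}=0$ is imposed. One must then reconcile this reduction with the fact that the quadratic term $\widehat u\,\widehat v$ does not, of itself, have vanishing mean. The way I would organise this is to split $\widehat v$ into its mean-value free part, for which the whole machinery above runs on $\L^{q^*}_{\mathrm{av}}(\Omega)$ (note $\int_\Omega \widehat w\cdot\nabla\widehat v\,\d x=0$ because $\widehat w\in\L^q_\sigma(\Omega)$), and its spatial mean, which obeys a scalar linear ODE with small, exponentially decaying source and zero initial datum and therefore decays at the same rate. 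Checking this bookkeeping, and matching the trace spaces $(\cX_0,\cX_1)_{\mu-1/p,p}$ and $(\cY_0,\cY_1)_{1-1/r,r}$ with the Besov spaces appearing in the statement exactly as in the proofs of Theorems~\ref{thm:localex} and~\ref{Thm: Existence/Stability}, is the only genuinely new input; everything else is a transcription of Sections~\ref{Sec: Proof of Local solvability}--\ref{Sec: Regularity and boundedness}.
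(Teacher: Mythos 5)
Your proposal follows the paper's approach essentially line by line: re-run the machinery of Sections~\ref{Sec: Proof of Local solvability}--\ref{Sec: Regularity and boundedness} for the new $\cA$ and $\Phi$, observe that the only genuinely new nonlinear term is $-u_1 v_2$, and estimate it by H\"older and Sobolev exactly as the paper does; then build the global solution around $(u_s,0,w_s)$ by perturbed maximal regularity. You also correctly note that the perturbation argument of Lemma~\ref{Lem: Perturbation} must now absorb a zeroth-order term $u_s\widehat v$, for which the invertibility of $-\Delta_{q^*}$ on $\L^{q^*}_{\mathrm{av}}(\Omega)$ gives the required relative bound. All of this matches the paper's (admittedly terse) proof.

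The place where you go beyond the paper's sketch is the observation that $\Phi_2(\widehat U,\widehat U)=-\widehat w\cdot\nabla\widehat v-\widehat u\,\widehat v$ has no reason to have vanishing spatial mean, so the fixed-point map does not a priori leave $\L^{q^*}_{\mathrm{av}}(\Omega)$ invariant. This is a real subtlety that the paper passes over in silence, and the decomposition you propose --- mean-free part plus scalar mean $m(t)$ --- is indeed the natural way to repair it. However, your claim that $m$ ``therefore decays at the same rate'' does not survive scrutiny. Expanding $\widehat v = \widehat v_{\mathrm{av}} + m$ and using that $\widehat u$ and $\widehat v_{\mathrm{av}}$ are mean-free, the relevant ODE is
\begin{align*}
 m'(t) + u_s\, m(t) \;=\; -\frac{1}{\lvert\Omega\rvert}\int_\Omega \widehat u\,\widehat v_{\mathrm{av}}\,\d x \;=:\; S(t),\qquad m(0)=v_{0,\Omega}=0,
\end{align*}
so that $m(t)=\int_0^t \e^{-u_s(t-s)}S(s)\,\d s$. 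The source $S$ decays like $\e^{-2\lambda t}$ once the fixed point is established, but the homogeneous part is damped \emph{only if} $u_s>0$. Assumption~\ref{Ass: Forcing} controls $\lvert u_s\rvert$ and nothing else; if $u_s=0$ then $m(t)\to\int_0^\infty S$, a generically nonzero constant, and if $u_s<0$ then $m$ grows like $\e^{\lvert u_s\rvert t}$. In either case $\e^{\lambda t}\lVert v(t)\rVert_{\L^{q^*}}$ does not tend to zero, so the exponential-decay claim for the $v$-component of Theorem~\ref{Thm: Variant}\,\eqref{Item: Global/Stability} cannot be obtained by your argument as stated. To close the gap you would need to add a hypothesis such as $u_s>0$ (natural for a population density, but not currently imposed), or restrict the decay conclusion to the mean-free part $v - v_\Omega(t)$. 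In short: your decomposition is the right structural fix for the mean-preservation problem, and you are right to flag it, but it surfaces a constraint on $u_s$ that must be made explicit before the global-stability clause can be deduced.
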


\section{Appendix: Real interpolation of solenoidal function spaces}

\noindent We will briefly present the real interpolation result of solenoidal function spaces needed in the proofs of Theorems~\ref{thm:localex} and~\ref{Thm: Existence/Stability}. For this purpose, we introduce the scales of Bessel potential and Besov spaces in more detail. \par
Let $\Omega \subset \IR^n$, $n \in \IN$ with $n \geq 2$, be a bounded Lipschitz domain. Following~\cite[Sec.~2]{Mitrea_Monniaux} we introduce for $1 < q < \infty$ and $s > \frac{1}{q} - 1$ the Bessel potential space of tempered distributions on $\IR^n$ encoding a ``zero trace'' on $\partial \Omega$ by
\begin{align*}
 \H^{s , q}_0 (\Omega ; \IC^n) := \{ w \in \H^{s , q} (\IR^n ; \IC^n) : \supp(w) \subset \overline{\Omega} \}
\end{align*}
and endow it with the inherited norm of $\H^{s , q} (\IR^n ; \IC^n)$, where $\H^{s , q} (\IR^n ; \IC^n)$ denotes the Bessel potential space on $\IR^n$. Moreover, we define
\begin{align*}
 \H^{s , q}_z (\Omega ; \IC^n) := \{ w \in \cD^{\prime} (\Omega ; \IC^n) : \exists W \in \H^{s , q}_0 (\Omega ; \IC^n) \text{ with } W|_{\Omega} = w \}
\end{align*}
and
\begin{align*}
 \H^{s , q} (\Omega ; \IC^n) := \{ w \in \cD^{\prime} (\Omega ; \IC^n) : \exists W \in \H^{s , q} (\IR^n ; \IC^n) \text{ with } W|_{\Omega} = w \}
\end{align*}
both endowed with the natural infimum norm. Note that $\H^{s , q}_0 (\Omega ; \IC^n)$ and $\H^{s , q}_z (\Omega ; \IC^n)$ can be identified via restriction to $\Omega$, or conversely, via extension to $\IR^n$ by zero, see~\cite[p.~1530]{Mitrea_Monniaux}. \par 
For the description of solenoidal function spaces the normal trace of a vector field is crucial. It is defined as follows: If $\frac{1}{q} - 1 < s < \frac{1}{q}$, we define the mapping
\begin{align*}
 \nu \, \cdot \, &: \big\{ w \in \H^{s , q} (\Omega ; \IC^n) : \divergence(w) \in \H^{s , q} (\Omega ; \IC^n) \big\} \to \B^{s - 1 / q}_{q , q} (\partial \Omega) \\
 \nu \cdot w &:= \Big[\phi \mapsto \langle \Phi , \divergence (w) \rangle_{\H^{- s , q^{\prime}} , \H^{s , q}} + \langle \nabla \Phi , w \rangle_{\H^{- s , q^{\prime}} , \H^{s , q}} \Big],
\end{align*}
where $\phi \in \B^{-(s - 1/q)}_{q^{\prime} , q^{\prime}} (\partial \Omega)$ and $\Phi \in \H^{1 - s , q^{\prime}} (\Omega)$ is such that $\Phi|_{\partial \Omega} = \phi$. The well-posedness of this definition was established in~\cite[Prop.~2.7]{Mitrea_Monniaux}. \par
If $s \geq \frac{1}{q}$ and $w \in \H^{s , q} (\Omega ; \IC^n)$ is such that $\divergence(w) \in \H^{s , q} (\Omega ; \IC^n)$, then $\nu \cdot w$ is interpreted as an element of
\begin{align*}
 \nu \cdot w \in \bigcap_{\frac{1}{q} - 1 < \alpha < \frac{1}{q}} \B^{\alpha - 1 / q}_{q , q} (\partial \Omega).
\end{align*}
In~\cite[Prop.~2.10]{Mitrea_Monniaux}, Mitrea and Monniaux proved for all $1 < q < \infty$ and $s > \frac{1}{q} - 1$ that
\begin{align*}
 \overline{\C_{c , \sigma}^{\infty} (\Omega)}^{\H^{s , q}_z} = \big\{ w \in \H^{s , q}_z (\Omega ; \IC^d) : \divergence (w) = 0 \text{ and } \nu \cdot w = 0 \text{ on } \partial \Omega \big\} =: \H^{s , q}_{0 , \sigma} (\Omega),
\end{align*}
where $\C_{c , \sigma}^{\infty} (\Omega)$ is the space of all smooth and solenoidal vector fields with compact support in $\Omega$. \par 
Analogously as above, we define for $1 < p , q < \infty$ and $s > \frac{1}{q} - 1$
\begin{align*}
 \B^s_{q , p ,0} (\Omega ; \IC^n) := \{ w \in \B^s_{q , p} (\IR^n ; \IC^n) : \supp(w) \subset \overline{\Omega} \}
\end{align*}
and endow it with the inherited norm of $\B^s_{q , p} (\IR^n ; \IC^n)$, where $\B^s_{q , p} (\IR^n ; \IC^n)$ denotes the Besov space on $\IR^n$, and we define
\begin{align*}
 \B^s_{q , p , z} (\Omega ; \IC^n) := \{ w \in \cD^{\prime} (\Omega ; \IC^n) : \exists W \in \B^s_{q , p , 0} (\Omega ; \IC^n) \text{ with } W|_{\Omega} = w \}
\end{align*}
and
\begin{align*}
 \B^s_{q , p} (\Omega ; \IC^n) := \{ w \in \cD^{\prime} (\Omega ; \IC^n) : \exists W \in \B^s_{q , p} (\IR^n ; \IC^n) \text{ with } W|_{\Omega} = w \}
\end{align*}
and endow both with the natural infimum norm. As before, one can identify $\B^s_{q , p ,0} (\Omega ; \IC^n)$ and $\B^s_{q , p , z} (\Omega ; \IC^n)$ via restriction to $\Omega$, or conversely, via extension to $\IR^n$ by zero. Moreover,~\cite[Thm.~3.5]{Triebel_Lipschitz} guarantees that
\begin{align*}
 \B^{s}_{q , p , 0} (\Omega ; \IC^n) = (\B^{-s}_{q^{\prime} , p^{\prime}} (\Omega ; \IC^n))^* \quad \text{for} \quad \frac{1}{q} + \frac{1}{q^{\prime}} = 1 = \frac{1}{p} + \frac{1}{p^{\prime}} \quad \text{and} \quad 0 \geq s > \frac{1}{q} - 1.
\end{align*}
Thus, for $s \leq \frac{1}{q} - 1$ we extend this scale by defining
\begin{align*}
 \B^{s}_{q , p , 0} (\Omega ; \IC^n) := (\B^{- s}_{q^{\prime} , p^{\prime}} (\Omega ; \IC^n))^*.
\end{align*}
If $s > \frac{1}{q} - 1$ the normal trace of an element in $\B^s_{q , p} (\Omega ; \IC^n)$ may be defined via the embedding $\B^s_{q , p} (\Omega ; \IC^n) \subset \H^{s^{\prime} , q} (\Omega ; \IC^n)$, $s^{\prime} < s$, since for $w \in \B^s_{q , p} (\Omega ; \IC^n)$ we can interprete $w \cdot \nu$ as an element of
\begin{align*}
 \nu \cdot w \in \bigcap_{\frac{1}{q} - 1 < s^{\prime} < \min(s , \frac{1}{q})} \B^{s^{\prime} - 1 / q}_{q , q} (\partial \Omega).
\end{align*}
Finally, we define
\begin{align*}
 \B^s_{q , p , 0 , \sigma} (\Omega) := \{ w \in \B^s_{q , p , z} (\Omega ; \IC^n) : \divergence(w) = 0 \text{ and } \nu \cdot w = 0 \text{ on } \partial \Omega \}.
\end{align*}
Our goal is now to establish the following interpolation result.

\begin{proposition}
\label{Prop: Real interpolation}
Let $\Omega \subset \IR^n$, $n \in \IN$ with $n \geq 2$, be a bounded Lipschitz domain, $s_0 , s_1 > \frac{1}{q} - 1$, $1 < p , q < \infty$ and $\theta \in (0 , 1)$. If $s := (1 - \theta) s_0 + \theta s_1$, then we have
\begin{align*}
 \big( \H^{s_0 , q}_{0 , \sigma} (\Omega) , \H^{s_1 , q}_{0 , \sigma} (\Omega) \big)_{\theta , p} = \B^s_{q , p , 0 , \sigma} (\Omega)
\end{align*}
with equivalent norms.
\end{proposition}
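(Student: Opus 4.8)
The plan is to prove the set equality by establishing two continuous inclusions, keeping track of the constants; since each inclusion will come with an explicit norm bound, the equivalence of norms is then automatic. Throughout I work in the ambient interpolation couple $\bigl(\H^{s_0,q}_z(\Omega;\IC^n),\H^{s_1,q}_z(\Omega;\IC^n)\bigr)$, and I use the identification $\H^{s,q}_0=\H^{s,q}_z$ for $s>\tfrac1q-1$ together with the Mitrea--Monniaux description $\H^{s,q}_{0,\sigma}(\Omega)=\{w\in\H^{s,q}_z(\Omega;\IC^n):\divergence w=0,\ \nu\cdot w=0\}$ recalled above, as well as its Besov-space analogue built into the definition of $\B^s_{q,p,0,\sigma}(\Omega)$. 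Note that $s=(1-\theta)s_0+\theta s_1>\tfrac1q-1$ too.

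For the inclusion $\bigl(\H^{s_0,q}_{0,\sigma}(\Omega),\H^{s_1,q}_{0,\sigma}(\Omega)\bigr)_{\theta,p}\hookrightarrow\B^s_{q,p,0,\sigma}(\Omega)$, I would apply functoriality of the real interpolation functor to the isometric inclusions $\H^{s_i,q}_{0,\sigma}(\Omega)\hookrightarrow\H^{s_i,q}_z(\Omega;\IC^n)$, which are compatible with the common Hausdorff space $\cD^{\prime}(\Omega;\IC^n)$; the induced map into $\bigl(\H^{s_0,q}_z(\Omega;\IC^n),\H^{s_1,q}_z(\Omega;\IC^n)\bigr)_{\theta,p}=\B^s_{q,p,z}(\Omega;\IC^n)$ is then bounded and injective, the last identity being the interpolation theorem for $\IC^n$-valued Bessel potential spaces on Lipschitz domains obtained componentwise from \cite[Thm.~2.13]{Triebel_Lipschitz}. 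Since any element $w$ of the left-hand space lies in $\H^{s_0,q}_{0,\sigma}(\Omega)+\H^{s_1,q}_{0,\sigma}(\Omega)$, it is divergence-free and has vanishing normal trace (a closed linear condition, the normal trace being bounded and linear by \cite[Prop.~2.7]{Mitrea_Monniaux}), hence $w\in\B^s_{q,p,0,\sigma}(\Omega)$ with the corresponding norm estimate.

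The reverse inclusion $\B^s_{q,p,0,\sigma}(\Omega)\hookrightarrow\bigl(\H^{s_0,q}_{0,\sigma}(\Omega),\H^{s_1,q}_{0,\sigma}(\Omega)\bigr)_{\theta,p}$ is the heart of the matter and I would carry it out by $K$-functional estimates. Let $w\in\B^s_{q,p,0,\sigma}(\Omega)\subset\B^s_{q,p,z}(\Omega;\IC^n)=\bigl(\H^{s_0,q}_z,\H^{s_1,q}_z\bigr)_{\theta,p}$. For each $t>0$ choose a near-optimal decomposition $w=a_0(t)+a_1(t)$ with $a_i(t)\in\H^{s_i,q}_z(\Omega;\IC^n)$ and $\|a_0(t)\|_{\H^{s_0,q}}+t\|a_1(t)\|_{\H^{s_1,q}}\le 2K\bigl(t,w;\H^{s_0,q}_z,\H^{s_1,q}_z\bigr)$. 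Since $\divergence w=0$, the distribution $g(t):=\divergence a_0(t)=-\divergence a_1(t)$ is supported in $\overline{\Omega}$, has vanishing integral over $\Omega$, and satisfies $\|g(t)\|_{\H^{s_0-1,q}}\le C\|a_0(t)\|_{\H^{s_0,q}}$ and $\|g(t)\|_{\H^{s_1-1,q}}\le C\|a_1(t)\|_{\H^{s_1,q}}$. I would then invoke a Bogovskii-type right inverse $\cB$ of the divergence on $\Omega$ — bounded $\H^{\sigma,q}_0(\Omega)\cap\{\langle\cdot,1\rangle=0\}\to\H^{\sigma+1,q}_0(\Omega;\IC^n)$ for $\sigma\in\{s_0-1,s_1-1\}$ — and set $c(t):=\cB g(t)$, which is supported in $\overline{\Omega}$, satisfies $\divergence c(t)=g(t)$, and obeys $\|c(t)\|_{\H^{s_i,q}}\le C\|a_i(t)\|_{\H^{s_i,q}}$. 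Then $w=\bigl(a_0(t)-c(t)\bigr)+\bigl(a_1(t)+c(t)\bigr)$ has divergence-free summands supported in $\overline{\Omega}$, hence with vanishing normal trace, so $a_0(t)-c(t)\in\H^{s_0,q}_{0,\sigma}(\Omega)$ and $a_1(t)+c(t)\in\H^{s_1,q}_{0,\sigma}(\Omega)$. This gives $K\bigl(t,w;\H^{s_0,q}_{0,\sigma},\H^{s_1,q}_{0,\sigma}\bigr)\le(1+C)\bigl(\|a_0(t)\|_{\H^{s_0,q}}+t\|a_1(t)\|_{\H^{s_1,q}}\bigr)\le 2(1+C)K\bigl(t,w;\H^{s_0,q}_z,\H^{s_1,q}_z\bigr)$, and integrating against $t^{-\theta p}\,\tfrac{\d t}{t}$ as in the definition of the real interpolation norm yields $\|w\|_{(\H^{s_0,q}_{0,\sigma},\H^{s_1,q}_{0,\sigma})_{\theta,p}}\lesssim\|w\|_{\B^s_{q,p,0,\sigma}(\Omega)}$. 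Combining the two inclusions finishes the proof.

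The step I expect to be the main obstacle is the Bogovskii step: constructing one right inverse of the divergence into the zero-trace Bessel potential scale over a merely Lipschitz domain that is bounded simultaneously at the two smoothness levels $s_0-1$ and $s_1-1$. For the range $s_0,s_1\in\{0,1\}$ that is actually needed in the proofs of Theorems~\ref{thm:localex} and~\ref{Thm: Existence/Stability} this is classical — Bogovskii's operator maps $\L^q_{\mathrm{av}}(\Omega)$ into $\W^{1,q}_0(\Omega;\IC^n)$ on any bounded Lipschitz domain, and the remaining level follows by duality — whereas for general $s_0,s_1>\tfrac1q-1$ one exploits that a bounded Lipschitz domain is a finite union of domains star-shaped with respect to a ball, on each of which the classical Bogovskii construction is bounded on the whole scale (one may also invoke the regularized Bogovskii operator of Costabel and McIntosh). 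A secondary, purely bookkeeping point, all of it supplied by \cite{Mitrea_Monniaux}, is the handling of the identifications $\H^{s,q}_0=\H^{s,q}_z$, the equivalence ``support in $\overline{\Omega}$'' $\Leftrightarrow$ ``vanishing (normal) trace'' for divergence-free fields, and the vanishing-integral compatibility of $g(t)$ that is required before $\cB$ can be applied.
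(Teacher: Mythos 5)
Your approach and the paper's are built from the same essential ingredients, but the paper packages them more abstractly. The paper constructs the composed map $\cP = G\circ D$, where $D = \divergence$ and $G$ is the Mitrea--Monniaux Bogovsk\u{\i}i operator (after identification via the map $\psi$), observes that $\cP$ is a bounded projection on $\H^{s_j,q}_0(\Omega;\IC^n)$ simultaneously for $j=0,1$ with kernel equal to the divergence-free fields, and then invokes the interpolation theorem for complemented subspaces (Triebel, Thm.~1.17.1); the normal-trace condition is then for free by the Mitrea--Monniaux observation that divergence-free fields supported in $\overline\Omega$ automatically have vanishing normal trace when restricted. Your argument unwinds that abstraction: the forward inclusion is functoriality plus the observation that divergence-free and null normal trace are closed linear conditions, and the reverse inclusion is exactly the $K$-functional proof of the complemented-subspace interpolation theorem, carried out by hand with the Bogovsk\u{\i}i correction supplying the projection term-by-term. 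Both arguments therefore stand or fall on the same fact, which you correctly identified as the crux: that a single Bogovsk\u{\i}i-type right inverse of the divergence, mapping zero-mean scalar distributions supported in $\overline\Omega$ into zero-trace vector fields supported in $\overline\Omega$, is bounded at both smoothness levels $s_0-1$ and $s_1-1$ on a merely Lipschitz domain. The paper cites \cite[Prop.~2.5, Lem.~2.4]{Mitrea_Monniaux} for precisely this; your alternative references (the Costabel--McIntosh regularized Bogovsk\u{\i}i operator, or a decomposition into star-shaped pieces) would also serve. One small point of bookkeeping worth being careful about if you were to write this up: Mitrea and Monniaux formulate the operator $K$ with domain $(\H^{s_j-1,q'}(\Omega))^*$ rather than $\H^{s_j-1,q}_0(\Omega)$, and the isomorphism $\psi$ bridging these spaces is what makes the composition $G = \widetilde{(K\circ\psi)(\cdot)}$ land back in the zero-trace scale; your sketch elides this identification. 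What the paper's route buys is that you never need to manipulate $K$-functionals or worry about near-optimal decompositions; what yours buys is transparency about exactly where the norm comparison comes from and why it is uniform in $t$.
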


\begin{proof}
We closely follow the lines of the corresponding result for the complex interpolation method in~\cite[Thm.~2.12]{Mitrea_Monniaux}. We introduce the spaces
\begin{align*}
 X_j := \H^{s_j , q}_0 (\Omega ; \IC^n) \quad \text{and} \quad Y_j := \big\{ f \in \H^{s_j - 1 , q}_0 (\Omega) : \langle f , 1 \rangle_{\cE^{\prime} , \cE} = 0 \big\} \qquad (j = 0 , 1)
\end{align*}
where $\cE$ denotes the space of smooth functions and $\cE^{\prime}$ the space of distributions with compact support, and define
\begin{align*}
 D : X_j \to Y_j, \; w \mapsto \divergence(w) \qquad (j = 0 , 1).
\end{align*}
Moreover, for $j = 0 , 1$, we define
\begin{align*}
 G : \H^{s_j - 1 , q_j}_0 (\Omega) \to X_j, \; f \mapsto \widetilde{(K \circ \psi)(f)},
\end{align*}
where the tilde means ``entension to $\IR^n$ by zero'', $K : (\H^{s_j - 1 , q_j^{\prime}} (\Omega))^{*} \to \H^{s_j , q_j}_z (\Omega ; \IC^n)$ denotes the Bogovsk\u{\i}i operator studied in~\cite[Prop.~2.5]{Mitrea_Monniaux} and $\psi$ denotes the extension of the operator
\begin{align*}
 \widetilde{\C_c^{\infty} (\Omega)} \to \C_c^{\infty} (\Omega), \; \widetilde{f} \mapsto f \quad \text{as an isomorphism} \quad \H^{1 - s_j , q_j}_0 (\Omega) \to (\H^{s_j - 1 , q_j^{\prime}} (\Omega))^{*}
\end{align*}
and was studied in~\cite[Lem.~2.4]{Mitrea_Monniaux}. It was then proven in~\cite[p.~1541]{Mitrea_Monniaux} that
\begin{align*}
 D \circ G = \Id \text{ on } Y_j \text{ for } j = 0 , 1.
\end{align*}
The previous relation implies that $\cP := G \circ D$ restricts to bounded linear projections on $X_j$ for each $j = 0 , 1$. Consequently, the kernel of $\cP|_{X_j}$ is a complemented subspace of $X_j$ and it is given by
\begin{align*}
 \ker (\cP|_{X_j}) = \ker(D|_{X_j}) = \big\{ w \in \H^{s_j , q}_0 (\Omega ; \IC^n) : \divergence(w) = 0 \big\}.
\end{align*}
Interpolation of complemented subspaces~\cite[Thm.~1.17.1]{Triebel} now yields that
\begin{align*}
 \big( \ker(\cP|_{X_0}) , \ker(\cP|_{X_1}) \big)_{\theta , p} = \big\{ w \in \big( \H^{s_0 , q}_0 (\Omega ; \IC^n) , \H^{s_0 , q}_0 (\Omega ; \IC^n) \big)_{\theta , p} : \divergence(w) = 0 \big\}.
\end{align*}
Next, it was proven below~\cite[Eq.~(2.106)]{Mitrea_Monniaux} that the restriction operator
\begin{align*}
 \boldsymbol{\cdot}\,|_{\Omega} : \ker(\cP|_{X_0}) \to \H^{s_j , q}_{0 , \sigma} (\Omega) \qquad (j = 0 , 1)
\end{align*}
is an isomorphism. Hence,
\begin{align*}
 \big( \H^{s_0 , q}_{0 , \sigma} (\Omega) , \H^{s_1 , q}_{0 , \sigma} (\Omega) \big)_{\theta , p} = \big\{ w|_{\Omega} : w \in \big( \H^{s_0 , q}_0 (\Omega ; \IC^n) , \H^{s_0 , q}_0 (\Omega ; \IC^n) \big)_{\theta , p} \text{ and } \divergence(w) = 0 \big\}.
\end{align*}
Using~\cite[Thm.~3.5]{Triebel_Lipschitz} we readily find that
\begin{align*}
 \big( \H^{s_0 , q}_0 (\Omega ; \IC^n) , \H^{s_0 , q}_0 (\Omega ; \IC^n) \big)_{\theta , p} = \B^s_{q , p , 0} (\Omega ; \IC^n).
\end{align*}
Below~\cite[Eq.~(2.106)]{Mitrea_Monniaux} it was, in particular, proven that $w|_{\Omega} \cdot \nu = 0$ for all $w \in \H^{s^{\prime} - 1 / q , q}_0 (\Omega ; \IC^n)$ with $\divergence (w) = 0$, where $s^{\prime} > \frac{1}{q} - 1$, which implies that $w|_{\Omega} \cdot \nu = 0$ for all $w \in \B^s_{q , p , 0} (\Omega ; \IC^n)$ with $\divergence(w) = 0$. It follows that
\begin{align*}
 \big( \H^{s_0 , q}_{0 , \sigma} (\Omega) , \H^{s_1 , q}_{0 , \sigma} (\Omega) \big)_{\theta , p} = \big\{ w|_{\Omega} : w \in \B^s_{q , p , 0} (\Omega ; \IC^n) \text{ and } \divergence(w) = 0 \big\} = \B^s_{q , p , 0 , \sigma} (\Omega). &\qedhere
\end{align*}
\end{proof}

\noindent
{\bf Acknowledgements}

\noindent
It is a pleasure for us to thank the Centre International de Rencontres Math\'ematiques (CIRM) in Luminy, France, for their warm hospitality and for providing an excellent environment, in which the content of this article was developed. 

\noindent
We also would like to thank Michael Winkler for inspiring  discussions about the Keller-Segel system and its variants at CIRM.

\end{document}